\documentclass{amsart}
\usepackage{amsmath,amssymb,latexsym}
\usepackage{amscd}
\usepackage{ulem}
\usepackage{mathrsfs}
\usepackage{mathdots}
\usepackage{fancyhdr}

\pagestyle{fancy}
\lhead{}
\fancyhead[CO]{\footnotesize{Higher differential objects in additive categories}}
\fancyhead[CE]{\footnotesize{Xi Tang and Zhaoyong Huang}}

\rhead{}
\lfoot{}
\cfoot{}
\cfoot{\thepage}

\usepackage[all]{xy}
\usepackage{amsthm}
\usepackage{amsfonts}

\newtheorem{theorem}{Theorem}[section]
\newtheorem{lemma}[theorem]{Lemma}
\newtheorem{proposition}[theorem]{Proposition}
\newtheorem{corollary}[theorem]{Corollary}

 \theoremstyle{definition}
 \newtheorem{definition}[theorem]{Definition}
 
 \newtheorem{remark}[theorem]{Remark}

 \numberwithin{equation}{section}
 \DeclareMathOperator{\End}{End}
 \DeclareMathOperator{\Hom}{Hom}
 \DeclareMathOperator{\Tor}{Tor}
 
 \DeclareMathOperator{\im}{Im}
 \DeclareMathOperator{\Ker}{Ker}
 \DeclareMathOperator{\Mod}{Mod}

\def\Ker{\mathop{\rm Ker}\nolimits}

\def\End{\mathop{\rm End}\nolimits}
\def\Ext{\mathop{\rm Ext}\nolimits}
\def\pd{\mathop{\rm pd}\nolimits}

\def\id{\mathop{\rm id}\nolimits}
\def\findim{\mathop{\rm fin.dim}\nolimits}

\def\mod{\mathop{\rm mod}\nolimits}
\def\Mod{\mathop{\rm Mod}\nolimits}
\def\ob{\mathop{\rm ob}\nolimits}
\def\Cen{\mathop{\rm Cen}\nolimits}
\def\Gfd{\mathop{\rm Gfd}\nolimits}
\def\Cone{\mathop{\rm Cone}\nolimits}

\def\D{\mathop{\rm D}\nolimits}
\def\K{\mathop{\rm K}\nolimits}
\def\H{\mathop{\rm H}\nolimits}

\def\op{\mathop{\rm op}\nolimits}
\def\Ab{\mathop{\rm Ab}\nolimits}

\def\H{\mathop{\rm H}\nolimits}
%%%%%%%%%%%%%%%%%%%%%%%%%%%%%%%%%% Category notation %%%%%%%%%%%%%%%%%%%%%%%%%%%%%%%%%%%%%%
           %% font for categories

   %% font for classes
 %% Hovey uses for maps, like f: A -> B

\begin{document}

\title{\footnotesize{Higher differential objects in additive categories}}
\thanks{{\it Key words and phrases:} $n$-th differential objects, additive categories, exact categories, homological conjectures,
triangulated categories, derived categories.}

\author{Xi Tang}
\address{College of Science, Guilin University of Technology, Guilin 541004, Guangxi Province, P. R. China \\}
\email{tx5259@sina.com.cn}

\author{Zhaoyong Huang}
\address{Department of Mathematics, Nanjing University, Nanjing 210093, Jiangsu Province, P.R. China}
\email{huangzy@nju.edu.cn}
\urladdr{http://math.nju.edu.cn/~huangzy/}

\subjclass[2010]{18E10, 18E30, 18G05}

%\extraline{acknowledge grant here}

\date{}

\begin{abstract}
Given an additive category $\mathcal{C}$ and an integer $n\geqslant 2$. We form a new additive category $\mathcal{C}[\epsilon]^n$
consisting of objects $X$ in $\mathcal{C}$ equipped with an endomorphism $\epsilon_X$ satisfying ${\epsilon^n_X}=0$.
First, using the descriptions of projective and injective objects in $\mathcal{C}[\epsilon]^n$, we not only
establish a connection between Gorenstein flat modules over a ring $R$ and $R[t]/(t^n)$, but also
prove that an Artinian algebra $R$ satisfies some homological conjectures if and only if so does $R[t]/(t^n)$.
Then we show that the corresponding homotopy category $\K(\mathcal{C}[\epsilon]^n)$ is a triangulated category
when $\mathcal{C}$ is an idempotent complete exact category. Moreover, under some conditions for an abelian category
$\mathcal{A}$, the natural quotient functor $Q$ from $\K(\mathcal{A}[\epsilon]^n)$ to the derived category
$\D(\mathcal{A}[\epsilon]^n)$ produces a recollement of triangulated categories. Finally, we prove that
if $\mathcal{A}$ is an Ab4-category with a compact projective generator,
then $\D(\mathcal{A}[\epsilon]^n)$ is a compactly generated triangulated category.
\end{abstract}

\maketitle

\section{\bf Introduction}
Let $R$ be an arbitrary associative ring with unit. A {\it differential $R$-module} is an $R$-module $M$
equipped with a square-zero endomorphism $\epsilon$, called the {\it differentiation} of $M$.  To be precise,
differential modules are exactly modules over {\it the ring of dual numbers} over $R$, that is, the ring
$R[\epsilon]:=R[t]/(t^2)$ (the factor ring of the polynomial ring $R[t]$ in one variable $t$ modulo the
ideal generated by $t^2$). Differential modules introduced in the monograph of Cartan and Eilenberg \cite{CE}
are ubiquitous in homological algebra, and were employed as a means to provide a convenient framework
for a unified treatment of some problems from ring theory and topology in work by Avramov, Buchweitz and Iyengar
\cite{ABI}. There are a lot of recent work on differential modules, see \cite{RZ,S1,S2,S3,W,XYY} and so on.
In particular, Xu, Yang and Yao \cite{XYY} introduced $n$-th differential modules with $n\geqslant 2$ such that
2-nd differential modules are exactly classical differential modules, and they proved that
an $n$-differential module is Gorenstein projective (resp. injective) if and only if its underlying
module is Gorenstein projective (resp. injective). It generalized a result about differential modules
by Wei \cite{W}.

Given an additive category $\mathcal{C}$. Stai \cite{S1} introduced a new additive category $\mathcal{C}[\epsilon]$
in the following way: An object in $\mathcal{C}[\epsilon]$ is a pair $(A, \epsilon_A)$ such that $A\in \mathcal{C}$
and $\epsilon_A\in \End_{\mathcal{C}}(A)$ has the property ${\epsilon_A}^2=0$; and a morphism
$f\in \Hom_{\mathcal{C}[\epsilon]}(A,B)$ is what one might expect, namely a morphism $f\in \Hom_{\mathcal{C}}(A,B)$
satisfying $\epsilon_Bf=f\epsilon_A$. It is easily seen that objects in
$\mathcal{C}[\epsilon]$ generalize differential modules. Inspired by the above facts, as a higher analogue of
$\mathcal{C}[\epsilon]$, we will introduce and study an additive category
$\mathcal{C}[\epsilon]^n$ with $n\geqslant 2$, such that objects in $\mathcal{C}[\epsilon]^n$ are a common generalization
of $n$-th differential modules and objects in $\mathcal{C}[\epsilon]$.
The paper is organized as follows.

In Section 2, we give some terminology and notations.

Let $\mathcal{C}$ be an additive category, and let $F:\mathcal{C}[\epsilon]^n\to \mathcal{C}$ be the forgetful functor
and $T:\mathcal{C}\to\mathcal{C}[\epsilon]^n$ the augmenting functor. In Section 3, we first prove that both pairs
$(F,T)$ and $(T,F)$ are adjoint pairs (Proposition \ref{prop: 3.1}). Let $(\mathcal{C},\mathscr{E})$ be an idempotent
complete exact category, and let $\mathscr{E}_F$ be the class of pairs of composable morphisms in $\mathcal{C}[\epsilon]^n$
that become short exact sequences in $\mathcal{C}$ via the forgetful functor $F$. Then, with the aid of Proposition \ref{prop: 3.1},
we have that $Y$ is projective (resp. injective) in $(\mathcal{C}[\epsilon]^n, \mathscr{E}_F)$ if and only if $Y\cong T(X)$
for some projective (resp. injective) object $X$ of $\mathcal{C}$ (Proposition \ref{prop: 3.6}). These two results are higher
analogue of \cite[Propositions 2.1 and 2.7]{S1} respectively. In the latter part of this section, we give two applications of
Proposition \ref{prop: 3.6}. One of them states that for a ring $R$, a left $R[t]/(t^n)$-module $M$ is Gorenstein flat
if and only if it is Gorenstein flat as a left $R$-module (Theorem ~\ref{thm: 3.10}).
The other states that an Artinian algebra $R$ satisfies any of the finitistic dimension conjecture, strong Nakayama conjecture,
generalized Nakayama conjecture, Auslander-Gorenstein conjecture, Nakayama conjecture and Gorenstein symmetry conjecture if and only if
$R[t]/(t^n)$ satisfies the same conjecture (Theorem ~\ref{thm: 3.13}).

Let $\mathcal{C}$ be an exact category with trivial exact structure $\mathscr{E}^t$, and let $\mathscr{E}_F^t$ be the induced
exact structure via the forgetful functor $F$ in $\mathcal{C}[\epsilon]^n$. In Section 4, we prove that if $(\mathcal{C},\mathscr{E}^t)$
is an idempotent complete exact category, then $(\mathcal{C}[\epsilon]^n,\mathscr{E}_F^t)$
is a Frobenius category and its stable category $\underline{\mathcal{C}[\epsilon]^n}$ is a triangulated category (Proposition \ref{prop: 4.2}),
which coincides with the homotopy category $\K(\mathcal{C}[\epsilon]^n)$ (Theorem ~\ref{thm: 4.7}).

In Section 5 we introduce the derived category $\D(\mathcal{A}[\epsilon]^n)$ of $n$-th differential objects for an abelian
category $\mathcal{A}$. With mild assumptions on $\mathcal{A}$, we show that both $(\K^p(\mathcal{A}[\epsilon]^n), \K^a(\mathcal{A}[\epsilon]^n))$
and $(\K^a(\mathcal{A}[\epsilon]^n), \K^i(\mathcal{A}[\epsilon]^n)$ are stable $t$-structures, where
\linebreak
$\K^p(\mathcal{A}[\epsilon]^n)$, $\K^i(\mathcal{A}[\epsilon]^n)$ and $\K^a(\mathcal{A}[\epsilon]^n)$
are the categories of $K$-projective, $K$-injective and acyclic objects respectively; moreover, these allow us to derive from
the quotient functor $Q: \K(\mathcal{A}[\epsilon]^n) \to \D(\mathcal{A}[\epsilon]^n)$ a recollement of triangulated categories
(Theorem ~\ref{thm: 5.11}). In particular, if $\mathcal{A}$ is an Ab4-category with a compact projective generator,
then $\D(\mathcal{A}[\epsilon]^n)$ is a compactly generated triangulated category (Theorem ~\ref{thm: 5.17}).

\section{\bf Preliminaries}
Let $\mathcal{C}$ be an additive category and $n\geqslant 2$ a fixed positive integer.
An {\it $n$-th differential object} of $\mathcal{C}$ is a pair $(X, \epsilon_X)$, where $X\in\ob\mathcal{C}$
and $\epsilon_X\in \End_{\mathcal{C}}(X)$ satisfying ${\epsilon_X}^n=0$. Differential objects \cite{S1}
are exactly 2-nd differential objects. We define the category $\mathcal{C}[\epsilon]^n$ as follows:
The objects of $\mathcal{C}[\epsilon]^n$ are $n$-th differential objects, and the set of morphisms from
$(X, \epsilon_X)$ to $(Y,\epsilon_Y)$ consists of morphisms $f: X\to Y$ of $\mathcal{C}$ satisfying the equality
$f\epsilon_X=\epsilon_Yf$. The morphisms in $\mathcal{C}[\epsilon]^n$ are composed in the same way as the morphisms
in $\mathcal{C}$. It is easy to see that $\mathcal{C}[\epsilon]^n$ is also an additive category.

Let $R$ be a ring, and let $\Mod R$ be the category of left $R$-modules
and $\mod R$ the category of finitely generated left $R$-modules. Then we have $(\Mod R)[\epsilon]^n\cong \Mod (R[t]/(t^n))$.
Indeed, to an object $(X,\epsilon_X)\in (\Mod R)[\epsilon]^n$, associate the left $R[t]/(t^n)$-module $X$ with
$$(r_0+r_1t+\cdots +r_{n-1}t^{n-1})x:=r_0x+r_1\epsilon_X(x)+\cdots +r_{n-1}{\epsilon_X}^{n-1}(x).$$
Conversely, given a left $R[t]/(t^n)$-module $X$, we associate it with an $n$-th differential object $(X,\epsilon_X)$ in
$(\Mod R)[\epsilon]^n$ where $\epsilon_X(x):=tx$.

The following definition is cited from \cite{B}, see also \cite{K} and \cite{Q}.

\begin{definition}\label{df: 2.1}
Let $\mathcal{C}$ be an additive category. A {\it kernel-cokernel pair} $(i,p)$ in $\mathcal{C}$ is a pair of composable
morphisms $A\buildrel {i} \over \rightarrow B\buildrel {p} \over \rightarrow C$ such that $i$ is a kernel of $p$ and $p$
is a cokernel of $i$. We shall call $i$ an {\it admissible monic} and $p$ an {\it admissible epic}.

An {\it exact category} ($\mathcal{C},\mathscr{E}$) is an additive category $\mathcal{C}$ with a class $\mathscr{E}$ of
kernel-cokernel pairs which is closed under isomorphisms and satisfies the following axioms:
\begin{enumerate}
\item[{[E0]}] For all objects $C\in \mathcal{C}$, the identity morphism $1_C$ is an admissible monic.
\item[{[E0$^{\op}$]}] For all objects $C\in \mathcal{C}$, the identity morphism $1_C$ is an admissible epic.
\item[{[E1]}] The class of admissible monics is closed under compositions.
\item[{[E1$^{\op}$]}] The class of admissible epics is closed under compositions.
\item[{[E2]}] The push-out of an admissible monic along an arbitrary morphism exists and yields
an admissible monic.
\item[{[E2$^{\op}$]}] The pull-back of an admissible epic along an arbitrary morphism exists and yields
an admissible epic.
\end{enumerate}
Elements of $\mathscr{E}$ are called {\it short exact sequences}.

Let ($\mathcal{C},\mathscr{E}$) and ($\mathcal{C}',\mathscr{E}'$) be exact categories. An (additive) functor
$F: \mathcal{C} \to \mathcal{C}'$ is called {\it exact} if $F(\mathscr{E})\subseteq\mathscr{E}'$.
Let $\Ab$ be the category of abelian groups with the canonical exact structure. An object $P$ of
an exact category $\mathcal{C}$ is called {\it projective} if the represented functor
$\Hom_{\mathcal{C}}(P,-):\mathcal{C} \to \Ab$ is exact. Dually an object $I$ of an exact category $\mathcal{C}$
is called {\it injective} if the corepresented functor $\Hom_{\mathcal{C}}(-, I):\mathcal{C} \to \Ab$ is exact.
An exact category ($\mathcal{C}$, $\mathscr{E}$) has {\it enough projectives} if for any $X\in \ob\mathcal{C}$
there exists an admissible epic $P\to X$ in $\mathcal{C}$ with $P$ projective. Dually, ($\mathcal{C}$, $\mathscr{E}$)
has {\it enough injectives} if for any $X\in \ob\mathcal{C}$ there exists an admissible monic $X\to I$ in $\mathcal{C}$
with $I$ injective. An exact category is {\it Frobenius} if it has enough projectives and injectives
and moreover the projectives coincide with the injectives. For a Frobenius category $\mathcal{C}$, the corresponding
{\it stable category $\underline{\mathcal{C}}$} is the category whose objects are the objects of $\mathcal{C}$
and morphisms are given by, for any $A,B\in \ob\mathcal{C}$, $\Hom_{\underline{\mathcal{C}}}(A,B)=\Hom_{\mathcal{C}}(A,B)/P(A,B)$,
where $P(A,B)$ is the subgroup of morphisms $A\to B$ that factor through a projective object of $\mathcal{C}$ (see \cite{Ha}).
For basic notions and terminology on triangulated or derived categories we refer to \cite{Ha} and \cite{We}.
\end{definition}

\section{\bf From $\mathcal{C}$ to $\mathcal{C}[\epsilon]^n$}

Let $\mathcal{C}$ be an additive category and $n\geqslant 2$.
We introduce two functors between $\mathcal{C}$ and $\mathcal{C}[\epsilon]^n$, which will be
used for a complete description of the projective and injective objects of $\mathcal{C}[\epsilon]^n$.
\begin{enumerate}
\item[(1)] The forgetful functor $F: \mathcal{C}[\epsilon]^n\to \mathcal{C}$ is defined on the objects
$(X, \epsilon_X)$ of $\mathcal{C}[\epsilon]^n$ by $F(X, \epsilon_X)=X$ and on the morphisms $f$
in $\mathcal{C}[\epsilon]^n$ by $F(f)=f$.
\item[(2)] We define the augmenting functor $T: \mathcal{C} \to \mathcal{C}[\epsilon]^n$, which takes an object
$X$ of $\mathcal{C}$ to the object $T(X)=(X^{\oplus n},\epsilon_{X^{\oplus n}})$ of
$\mathcal{C}[\epsilon]^n$ with $X^{\oplus n}=\underbrace {X\oplus X\oplus \cdots \oplus X}_n$ and
$$\epsilon_{X^{\oplus n}}:= \left(
\begin{array}{ccccc}
0& 0& 0&\cdots &0 \\
1& 0& 0&\cdots &0 \\
0& 1& 0&\cdots &0 \\
\vdots&\vdots &\ddots& \ddots&\vdots\\
0& 0& \cdots&1 &0\\
\end{array}
\right)_{n\times n,}$$
and takes a morphism $f$ in $\mathcal{C}$ to the morphism
$$\left(\begin{array}{ccccc}
f& 0&\cdots &0 \\
0& f&\cdots &0 \\
\vdots&\vdots &\ddots&\vdots\\
0& 0 &\cdots &f\\
\end{array}
\right)_{n\times n}$$
in $\mathcal{C}[\epsilon]^n$.
\end{enumerate}

\subsection{Two-sided adjoints and the consistency of properties}

In this subsection, we generalize the results about differential objects in \cite[Chapter 2]{S1}
to the case for higher differential objects.

\begin{proposition}\label{prop: 3.1}
Both pairs $(F,T)$ and $(T,F)$ are adjoint pairs.
\end{proposition}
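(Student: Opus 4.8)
The plan is to prove the two adjunctions by exhibiting, for each object $X\in\mathcal{C}$ and each $Y=(A,\epsilon_A)\in\mathcal{C}[\epsilon]^n$, natural bijections
\[
\Hom_{\mathcal{C}[\epsilon]^n}(T(X),Y)\cong\Hom_{\mathcal{C}}(X,F(Y)),\qquad
\Hom_{\mathcal{C}[\epsilon]^n}(Y,T(X))\cong\Hom_{\mathcal{C}}(F(Y),X),
\]
which say respectively that $T$ is left adjoint to $F$ and that $F$ is left adjoint to $T$. The single observation driving both computations is that the commutation constraint against the nilpotent shift $\epsilon_{X^{\oplus n}}$ rigidly propagates one component of a morphism to all the others, leaving exactly one degree of freedom.

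First I would treat $(T,F)$. A morphism $T(X)\to Y$ is a morphism $h\colon X^{\oplus n}\to A$ of $\mathcal{C}$, which I record as a row $(h_1,\dots,h_n)$ with $h_j\colon X\to A$, subject to $h\,\epsilon_{X^{\oplus n}}=\epsilon_A\,h$. Reading this off in coordinates, the shift matrix converts the single matrix equation into the system $h_{j+1}=\epsilon_A h_j$ for $1\le j\le n-1$ together with $\epsilon_A h_n=0$. Hence $h_j=\epsilon_A^{\,j-1}h_1$ is forced for every $j$, while the remaining relation $\epsilon_A h_n=\epsilon_A^{\,n}h_1=0$ is automatic since $\epsilon_A^{\,n}=0$. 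Thus $h\mapsto h_1=h\circ\iota_1$ (restriction to the first summand) is a bijection onto $\Hom_{\mathcal{C}}(X,A)$, with inverse sending $u\colon X\to A$ to the row $(u,\epsilon_A u,\dots,\epsilon_A^{\,n-1}u)$.

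The adjunction $(F,T)$ is the mirror computation. A morphism $Y\to T(X)$ is a morphism $g\colon A\to X^{\oplus n}$ of $\mathcal{C}$, written as a column $(g_1,\dots,g_n)$ with $g_i\colon A\to X$, now subject to $g\,\epsilon_A=\epsilon_{X^{\oplus n}}\,g$. Here the shift acts on the other side, yielding $g_i=g_{i+1}\epsilon_A$ for $1\le i\le n-1$ and $g_1\epsilon_A=0$; so every component is determined by the last one through $g_i=g_n\epsilon_A^{\,n-i}$, and the constraint $g_1\epsilon_A=g_n\epsilon_A^{\,n}=0$ again holds for free. This gives the bijection $g\mapsto g_n=\pi_n\circ g$ onto $\Hom_{\mathcal{C}}(A,X)$.

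What remains, and where essentially all the bookkeeping lives, is naturality of these bijections in both $X$ and $Y$. In the variable $Y$ it is immediate, since $F$ acts as the identity on underlying morphisms, so post-composition commutes with extracting the free component. In the variable $X$ one uses that $T(\beta)=\mathrm{diag}(\beta,\dots,\beta)$ satisfies $T(\beta)\circ\iota_1=\iota_1\circ\beta$ and $\pi_n\circ T(\beta)=\beta\circ\pi_n$, so pre-composition passes through the extraction as well. I expect no genuine obstacle here; the only point that demands care is confirming that the component singled out as \emph{free} is the correct one on each side (the first summand for $T\dashv F$, the last for $F\dashv T$), since the opposite choice would fail to be a bijection. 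Equivalently, one could package the data as a unit $\eta_X=\iota_1\colon X\to FT(X)$ together with the matching counit for $T\dashv F$ (and dually for $F\dashv T$) and verify the triangle identities, but the direct description of the Hom-sets above is the more economical route.
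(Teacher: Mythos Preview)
Your proposal is correct and follows essentially the same approach as the paper: both arguments identify the unique free component of a morphism to or from $T(X)$ by unwinding the commutation relation with the shift matrix (the last component for $\Hom_{\mathcal{C}[\epsilon]^n}(Y,T(X))$, the first for $\Hom_{\mathcal{C}[\epsilon]^n}(T(X),Y)$), and then check naturality directly. The paper writes out the naturality squares in slightly more detail, but the content is the same.
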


\begin{proof}
Fix $(X, \epsilon_X)\in \ob\mathcal{C}[\epsilon]^n$ and $Y\in \ob\mathcal{C}$.

Firstly we show that $(F,T)$ is an adjoint pair. To this end, we need to find an isomorphism
$\phi:\Hom_{\mathcal{C}}(FX,Y)\to \Hom_{\mathcal{C}[\epsilon]^n}(X,TY)$ which is natural in both
$X$ and $Y$. Given $f\in \Hom_{\mathcal{C}}(FX,Y)$, we define
$$\phi(f)= \left(
\begin{array}{c}
f{\epsilon_X}^{n-1} \\
f{\epsilon_X}^{n-2} \\
\vdots \\
f\\
\end{array}
\right).$$
It is easy to verify that $\phi$ is well defined. Moreover, let
$\phi^{-1}:\Hom_{\mathcal{C}[\epsilon]^n}(X,TY)\to \Hom_{\mathcal{C}}(FX,Y)$ be given by
$$\phi^{-1} (g)=g_n\ \text{for} \
g=\left(
\begin{array}{c}
g_1 \\
g_2 \\
\vdots \\
g_n\\
\end{array}
\right)\in \Hom_{\mathcal{C}[\epsilon]^n}(X,TY).$$
It is obvious that $\phi^{-1}\phi=1$. On the other hand, since $g\in\Hom_{\mathcal{C}[\epsilon]^n}(X,TY)$,
the equality $\epsilon_{Y^{\oplus n}}g=g\epsilon_X$ implies $g_1\epsilon_X=0$ and $g_{i}\epsilon_X=g_{i-1}$
for any $2\leqslant i\leqslant n$. Thus we have
$$g_n{\epsilon_X}^i=g_{n-1}{\epsilon_X}^{i-1}=\cdots = g_{n-i+1}\epsilon_X=g_{n-i}$$
for any $1\leqslant i\leqslant n-1$, which implies $\phi\phi^{-1}=1$.
Now we will check the naturality of $\phi$, let $\alpha: (X, \epsilon_X)\to (X', \epsilon_{X'})$ be a morphism in
$\mathcal{C}[\epsilon]^n$. Then $\epsilon_{X'}\alpha=\alpha\epsilon_{X}$. For any morphism
$f\in \Hom_{\mathcal{C}}(FX',Y)$, we have
$$\Hom_{\mathcal{C}[\epsilon]^n}(\alpha,TY)\phi(f)
=\Hom_{\mathcal{C}[\epsilon]^n}(\alpha,TY)(\left(
\begin{array}{c}
f\epsilon_{X'}^{n-1} \\
f\epsilon_{X'}^{n-2} \\
\vdots \\
f\\
\end{array}\right))$$
$$=\left(\begin{array}{c}
f\epsilon_{X'}^{n-1}\alpha \\
f\epsilon_{X'}^{n-2}\alpha \\
\vdots \\
f\alpha\\
\end{array}\right)
=\left(
\begin{array}{c}
f\alpha\epsilon_{X}^{n-1} \\
f\alpha\epsilon_{X}^{n-2} \\
\vdots \\
f\alpha\\
\end{array}\right)
=\phi\Hom_{\mathcal{C}}(F\alpha,Y)(f).$$
On the other hand, let $\beta:Y\to Y'$ be a morphism in $\mathcal{C}$. For any morphism
$f\in \Hom_{\mathcal{C}}(FX,Y)$, we have
$$\Hom_{\mathcal{C}[\epsilon]^n}(X,T\beta)\phi(f)
=\Hom_{\mathcal{C}[\epsilon]^n}(X,T\beta)(\left(
\begin{array}{c}
f\epsilon_{X}^{n-1} \\
f\epsilon_{X}^{n-2} \\
\vdots \\
f\\
\end{array}\right))$$
$$=\left(
\begin{array}{c}
\beta f\epsilon_{X}^{n-1} \\
\beta f\epsilon_{X}^{n-2} \\
\vdots \\
\beta f\\
\end{array}\right)
=\phi \Hom_{\mathcal{C}}(FX,\beta)(f).$$
The arguments above induce the following commutative diagram
$$\xymatrix{ \Hom_{\mathcal{C}}(FX',Y) \ar[r]^{\Hom_{\mathcal{C}}(F\alpha,Y)} \ar[d]^{\phi}
& \Hom_{\mathcal{C}}(FX,Y) \ar[r]^{\Hom_{\mathcal{C}}(FX,\beta)} \ar[d]^{\phi}
& \Hom_{\mathcal{C}}(FX,Y')\ar[d]^{\phi} \\
\Hom_{\mathcal{C}[\epsilon]^n}(X',TY)\ar[r]^{\Hom_{\mathcal{C}[\epsilon]^n}(\alpha,TY)}
& \Hom_{\mathcal{C}[\epsilon]^n}(X,TY)\ar[r]^{\Hom_{\mathcal{C}[\epsilon]^n}(X,T\beta)}
& \Hom_{\mathcal{C}[\epsilon]^n}(X,TY').}$$

For any $f=(f_1,f_2,\cdots, f_n)\in \Hom_{\mathcal{C}[\epsilon]^n}(TY, X)$, let
$\psi: \Hom_{\mathcal{C}[\epsilon]^n}(TY, X)\to \Hom_{\mathcal{C}}(Y,FX)$ be given by
$\psi(f):=f_1\in \Hom_{\mathcal{C}}(Y,FX)$. Similarly, we have that $\psi$ is an isomorphism
which is natural in $X$ and $Y$. So $(T,F)$ is also an adjoint pair.
\end{proof}

These two functors $F$ and $T$ defined above are useful in transferring an exact structure
in the initial category $\mathcal{C}$ to $\mathcal{C}[\epsilon]^n$. Let $(\mathcal{C},\mathscr{E})$
be an exact category, and let $\mathscr{E}_F$ be the class of pairs of composable
morphisms in $\mathcal{C}[\epsilon]^n$ that become short exact sequences in $\mathcal{C}$
via the forgetful functor $F$.

\begin{lemma}\label{lem: 3.2}
Let $(\mathcal{C},\mathscr{E})$ be an exact category. Then the following statements hold.
\begin{enumerate}
\item $(\mathcal{C}[\epsilon]^n, \mathscr{E}_F)$ is an exact category.
\item $F: \mathcal{C}[\epsilon]^n\to \mathcal{C}$ is exact.
\item $T: \mathcal{C} \to \mathcal{C}[\epsilon]^n$ is exact.
\end{enumerate}
\end{lemma}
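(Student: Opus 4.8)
The plan is to verify the three claims in the order stated, treating part (1) as the substantive step and deriving parts (2) and (3) essentially for free. First I would establish that $\mathscr{E}_F$ is a class of kernel-cokernel pairs in $\mathcal{C}[\epsilon]^n$. The key observation is that the forgetful functor $F$ reflects kernels and cokernels in a controlled way: if $(i,p)$ is a composable pair in $\mathcal{C}[\epsilon]^n$ whose image $(Fi, Fp)$ is a short exact sequence in $\mathcal{C}$, then $i$ is a kernel of $p$ already \emph{in} $\mathcal{C}[\epsilon]^n$. Indeed, given a morphism $g$ in $\mathcal{C}[\epsilon]^n$ with $pg = 0$, the universal property of the kernel in $\mathcal{C}$ produces a unique factorization $g = ig'$ in $\mathcal{C}$; the only thing to check is that $g'$ is a morphism in $\mathcal{C}[\epsilon]^n$, i.e. that it commutes with the $\epsilon$'s, and this follows by another uniqueness argument after observing that $i(g'\epsilon - \epsilon g') = 0$ and $i$ is monic. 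The cokernel half is dual.

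Next I would verify the six Quillen axioms [E0], [E0$^{\op}$], [E1], [E1$^{\op}$], [E2], [E2$^{\op}$] for $(\mathcal{C}[\epsilon]^n, \mathscr{E}_F)$. The guiding principle throughout is that $F$ creates these structures: a morphism in $\mathcal{C}[\epsilon]^n$ is an admissible monic (resp. epic) for $\mathscr{E}_F$ precisely when its underlying $\mathcal{C}$-morphism is one for $\mathscr{E}$. Axioms [E0], [E0$^{\op}$] are immediate since $1$ in $\mathcal{C}[\epsilon]^n$ maps to $1$ in $\mathcal{C}$. Axioms [E1], [E1$^{\op}$] follow because $F$ preserves composition and the corresponding axioms hold in $\mathscr{E}$. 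For [E2] and [E2$^{\op}$], the essential point is that pushouts of admissible monics and pullbacks of admissible epics, constructed underlying in $\mathcal{C}$, automatically carry an $\epsilon$-structure: given the pushout square in $\mathcal{C}$, I would use the universal property to produce the endomorphism $\epsilon$ on the pushout object and check ${\epsilon}^n = 0$ and the commutativity conditions, so that the square lives in $\mathcal{C}[\epsilon]^n$ and its pushout leg is again an admissible monic for $\mathscr{E}_F$.

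Parts (2) and (3) are then nearly tautological. For (2), exactness of $F$ means $F(\mathscr{E}_F) \subseteq \mathscr{E}$, which is the very definition of $\mathscr{E}_F$. For (3), I would show $T(\mathscr{E}) \subseteq \mathscr{E}_F$: applying $T$ to a short exact sequence in $\mathcal{C}$ and then $F$ yields the $n$-fold direct sum of the original sequence (since $FT(X) = X^{\oplus n}$), which is short exact in $\mathcal{C}$ because $\mathscr{E}$ is closed under finite direct sums; hence $T$ of a short exact sequence lands in $\mathscr{E}_F$.

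The main obstacle I anticipate is the verification of [E2] and [E2$^{\op}$], specifically confirming that the $\epsilon$-structure transports correctly along the pushout/pullback and that the resulting endomorphism still squares (to the $n$-th power) to zero; everything else reduces quickly to the corresponding fact in $(\mathcal{C},\mathscr{E})$ via the creation-of-structure principle. A secondary subtlety worth recording is whether idempotent completeness of $\mathcal{C}$ is needed here: for the bare exact-category axioms it is not, but I would keep it in view since the subsequent Proposition~\ref{prop: 3.6} on projective and injective objects will rely on it.
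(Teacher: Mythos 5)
Your proposal is correct and follows essentially the same route as the paper: show $\mathscr{E}_F$ consists of kernel--cokernel pairs by lifting universal properties from $\mathcal{C}$ and checking the induced maps commute with the $\epsilon$'s, verify the Quillen axioms by transporting the $\epsilon$-structure along cokernels and pushouts (checking $\epsilon^n=0$ via cancelability), and obtain (2) directly from the definition of $\mathscr{E}_F$ and (3) from closure of $\mathscr{E}$ under finite direct sums. The one point you gloss over is that your ``creation'' principle invoked for [E1] itself requires endowing the cokernel of the composite with an induced nilpotent endomorphism; this is the same $\epsilon$-transport argument you already spell out for [E2], and it is exactly what the paper's [E1] verification does.
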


\begin{proof}
(1) Let us first show that $\mathscr{E}_F$ is a class of kernel-cokernel pairs.
Suppose that $A\buildrel {i} \over \rightarrow B\buildrel {p} \over \rightarrow C$ is a pair of
morphisms in $\mathcal{C}[\epsilon]^n$ such that $(i,p)\in \mathscr{E}_F$. Then $pi=0$ in
$\mathcal{C}[\epsilon]^n$. Let $i': A'\to B$ be a morphism in $\mathcal{C}[\epsilon]^n$ such that $pi'=0$.
Since $i$ is the kernel of $p$ in $\mathcal{C}$, there exists a unique morphism $\phi\in \Hom_{\mathcal{C}}(A',A)$
such that $i\phi=i'$. Thus
$$i\epsilon_A\phi=\epsilon_Bi\phi=\epsilon_Bi'=i'\epsilon_{A'}=i\phi\epsilon_{A'}.$$
As $i$ is the kernel of $p$ in $\mathcal{C}$, $i$ is left cancelable. So $\epsilon_A\phi=\phi\epsilon_{A'}$.
It means that $\phi$ is a morphism in $\mathcal{C}[\epsilon]^n$.
Consequently, $i$ is the kernel of $p$ in $\mathcal{C}[\epsilon]^n$.
By a dual argument, we have that $p$ is the cokernel of $i$ in $\mathcal{C}[\epsilon]^n$. Furthermore,
it is easy to observe that $\mathscr{E}_F$ is closed under isomorphisms. Now we turn to show
that $(\mathcal{C}[\epsilon]^n, \mathscr{E}_F)$ satisfies all the axioms of Definition ~\ref{df: 2.1}.

It is easy to verify directly that [E0] and [E0$^{\op}$] hold.

[E1] Let $i_1:A\to M$ and $i_2:M\to B$ be admissible monics in $\mathcal{C}[\epsilon]^n$.
Then they are also admissible monics in $\mathcal{C}$. Set $i:=i_2i_1$. Since $\mathcal{C}$ is an exact category,
we have a short exact sequence
$$A\buildrel {i} \over \rightarrow B\buildrel {p} \over \rightarrow C$$ in $\mathcal{C}$.
Since $p\epsilon_Bi=pi\epsilon_A=0$ and $p$ is a cokernel of $i$ in $\mathcal{C}$, there exists a morphism
$\epsilon_C: C\to C$ such that $\epsilon_C p=p\epsilon_B$. Thus
$${\epsilon_C}^n p={\epsilon_C}^{n-1}p\epsilon_B=\cdots =p{\epsilon_B}^n=0.$$
The fact that $p$ is right cancelable implies ${\epsilon_C}^n=0$. So $(C,\epsilon_C)$ is an $n$-th differential object
and $i$ is an admissible monic in $\mathcal{C}[\epsilon]^n$.

Dually, we get [E1$^{\op}$].

[E2] Given any $f\in \Hom_{\mathcal{C}[\epsilon]^n}(A,A')$ and an admissible monic $i\in \Hom_{\mathcal{C}[\epsilon]^n}(A,B)$.
There exists a push-out diagram
\begin{equation}\label{diag: (3.1)}
\begin{gathered}
$$\xymatrix{ A \ar[r]^{i} \ar[d]^{f} & B \ar[d]^{f'}  \\
A'\ar[r]^{i'} & B'}$$
\end{gathered}
\end{equation}
in $\mathcal{C}$ such that $i'$ is an admissible monic. Since
$$i'\epsilon_{A'}f=i'f\epsilon_{A}=f'i\epsilon_{A}=f'\epsilon_{B}i,$$
by the universal property of push-outs in $\mathcal{C}$ there exists a unique morphism $\epsilon_{B'}: B'\to B'$ in $\mathcal{C}$
such that $\epsilon_{B'}i'=i'\epsilon_{A'}$ and $f'\epsilon_{B}=\epsilon_{B'}f'$. Note that
$${\epsilon_{B'}}^{n}i'={\epsilon_{B'}}^{n-1}i'\epsilon_{A'}=\cdots =i'{\epsilon_{A'}}^{n}=0\ \text{and}$$
$${\epsilon_{B'}}^{n}f'={\epsilon_{B'}}^{n-1}f'\epsilon_{B}=\cdots =i'{\epsilon_{B}}^{n}=0.$$
By the universal property of push-outs in $\mathcal{C}$ again, ${\epsilon_{B'}}^{n}=0$ and thus
the diagram ~\eqref{diag: (3.1)} is a commutative diagram in $\mathcal{C}[\epsilon]^n$. Next, we shall prove that
the diagram ~\eqref{diag: (3.1)} enjoys the appropriate universal property also in $\mathcal{C}[\epsilon]^n$.
Given $(X, \epsilon_X)\in \ob\mathcal{C}[\epsilon]^n$ and two morphisms $u: A'\to X$, $v:B\to X$ of $\mathcal{C}[\epsilon]^n$
such that $uf=vi$. Then there exists a unique morphism $w: B'\to X$ in $\mathcal{C}$ such that $wi'=u$ and $wf'=v$. Then
$$(\epsilon_Xw-w\epsilon_{B'})i'=\epsilon_Xwi'-w\epsilon_{B'}i'=\epsilon_Xu-wi'\epsilon_{A'}=\epsilon_Xu-u\epsilon_{A'}=0\ \text{and}$$
$$(\epsilon_Xw-w\epsilon_{B'})f'=\epsilon_Xwf'-w\epsilon_{B'}f'=\epsilon_Xv-wf'\epsilon_{B}=\epsilon_Xv-v\epsilon_{B}=0.$$
It follows that $\epsilon_Xw-w\epsilon_{B'}=0$ by the universal property of push-outs, proving the existence of the push-out
of an admissible monic $i$ along $f$. The reasoning in [E1] will ensure that $i'$ is also an admissible monic.

Dually, we get [E2$^{\op}$].

(2) It follows directly from the definition of the exact structure in $\mathcal{C}[\epsilon]^n$.

(3) Let
$$A\buildrel {i} \over \rightarrow B\buildrel {p} \over \rightarrow C$$
be a short exact sequence in $\mathcal{C}$. Applying the functor $T$ to it yields a sequence
\begin{equation}\label{diag: (3.2)}
\begin{gathered}
$$\xymatrix{ A^{\oplus n} \ar[rrr]^{ \left(
\begin{array}{cccc}
i& 0& \cdots &0 \\
0& i& \cdots &0 \\
\vdots&\vdots &\ddots&\vdots\\
0& 0& \cdots&i\\
\end{array}
\right)}& & & B^{\oplus n} \ar[rrr]^{ \left(
\begin{array}{cccc}
p& 0& \cdots &0 \\
0& p& \cdots &0 \\
\vdots&\vdots &\ddots&\vdots\\
0& 0& \cdots&p\\
\end{array}
\right)}& & & C^{\oplus n}}$$
\end{gathered}
\end{equation}
in $\mathcal{C}[\epsilon]^n$. We deduce from \cite[Proposition 2.9]{B} that ~\eqref{diag: (3.2)} is also a short exact sequence
in $\mathcal{C}[\epsilon]^n$. Therefore $T$ is an exact functor.
\end{proof}

According to \cite{B,KS}, an additive category $\mathcal{C}$ is called {\it idempotent complete} if every idempotent endomorphism
$e=e^2$ of an object $X\in \ob\mathcal{C}$ splits, that is, there exists a factorization
$$X\buildrel {\pi} \over \longrightarrow Y\buildrel {\iota} \over \longrightarrow X$$
of $e$ with $\pi\iota=1_Y$; and $\mathcal{C}$ is called {\it weakly idempotent complete} if every retraction has a kernel or
equivalently every coretraction has a cokernel. In particular, any abelian category is idempotent complete.

\begin{lemma}\label{lem: 3.3}
The following statements hold.
\begin{enumerate}
\item If $\mathcal{C}$ is weakly idempotent complete, then $\mathcal{C}[\epsilon]^n$ is weakly idempotent complete.
\item If $\mathcal{C}$ is idempotent complete, then $\mathcal{C}[\epsilon]^n$ is idempotent complete.
\end{enumerate}
\end{lemma}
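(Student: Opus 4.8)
The plan is to treat both statements with the single device already exploited in Lemma~\ref{lem: 3.2}: whenever the required universal construction is available in $\mathcal{C}$, I would transport the differential $\epsilon$ to the new object through the relevant universal property, and then verify that the transported differential is nilpotent of order $n$ using the cancellation property of a mono or epi. Begin with (2). Let $e\colon (X,\epsilon_X)\to (X,\epsilon_X)$ be an idempotent in $\mathcal{C}[\epsilon]^n$, so that $e\in\End_{\mathcal{C}}(X)$ satisfies $e^2=e$ and $e\epsilon_X=\epsilon_X e$. Since $\mathcal{C}$ is idempotent complete, $e$ splits in $\mathcal{C}$ via $X\xrightarrow{\pi}Y\xrightarrow{\iota}X$ with $\pi\iota=1_Y$ and $\iota\pi=e$. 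I would set $\epsilon_Y:=\pi\epsilon_X\iota$ and claim this makes $(Y,\epsilon_Y)$ an object of $\mathcal{C}[\epsilon]^n$ through which $e$ splits. Using $\pi e=\pi\iota\pi=\pi$ and $e\iota=\iota\pi\iota=\iota$ together with $e\epsilon_X=\epsilon_X e$, a direct check gives $\epsilon_Y\pi=\pi\epsilon_X\iota\pi=\pi\epsilon_X e=\pi e\epsilon_X=\pi\epsilon_X$ and, symmetrically, $\iota\epsilon_Y=\epsilon_X\iota$, so both $\pi$ and $\iota$ are morphisms in $\mathcal{C}[\epsilon]^n$.

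The crux of (2) is to confirm $\epsilon_Y^n=0$. For this I would prove by induction the telescoping identity $\epsilon_Y^k=\pi\epsilon_X^k\iota$ for all $k\geqslant 1$; the inductive step trades the middle factor $\iota\pi=e$ for a power of $\epsilon_X$ via $\epsilon_Y^{k+1}=\pi\epsilon_X^k\iota\pi\epsilon_X\iota=\pi\epsilon_X^k e\epsilon_X\iota=\pi\epsilon_X^{k+1}e\iota=\pi\epsilon_X^{k+1}\iota$, where the commutation of $e$ with $\epsilon_X$ is used. Taking $k=n$ yields $\epsilon_Y^n=\pi\epsilon_X^n\iota=0$. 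Since $\pi\iota=1_Y$ continues to hold in $\mathcal{C}[\epsilon]^n$, the idempotent $e=\iota\pi$ splits there, whence $\mathcal{C}[\epsilon]^n$ is idempotent complete.

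For (1) I would run the same argument with ``split idempotent'' replaced by ``kernel of a retraction'', which is precisely what weak idempotent completeness supplies. Let $p\colon (B,\epsilon_B)\to (C,\epsilon_C)$ be a retraction in $\mathcal{C}[\epsilon]^n$ with section $s$, so $ps=1_C$; then $p$ is a retraction in $\mathcal{C}$, and as $\mathcal{C}$ is weakly idempotent complete, $p$ admits a kernel $k\colon K\to B$ in $\mathcal{C}$. From $p\epsilon_B k=\epsilon_C pk=0$ and the universal property of $k$ there is a unique $\epsilon_K\colon K\to K$ with $k\epsilon_K=\epsilon_B k$; this at once equips $K$ with a differential and makes $k$ a morphism of $\mathcal{C}[\epsilon]^n$. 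Nilpotency then follows exactly as in axiom [E1] of Lemma~\ref{lem: 3.2}: from $k\epsilon_K^n=\epsilon_B k\epsilon_K^{n-1}=\cdots=\epsilon_B^n k=0$ and the left-cancellability of the monic $k$ we obtain $\epsilon_K^n=0$. Finally, that $k$ is a kernel of $p$ in $\mathcal{C}[\epsilon]^n$ is verified as in the proof that $\mathscr{E}_F$ consists of kernel--cokernel pairs: any $g$ of $\mathcal{C}[\epsilon]^n$ with $pg=0$ factors uniquely through $k$ in $\mathcal{C}$, and the factorization automatically commutes with the differentials, again because $k$ is left-cancellable.

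The main obstacle, modest as it is, lies in part (2): one must first hit upon the correct formula $\epsilon_Y=\pi\epsilon_X\iota$ for the induced differential and then carry the telescoping identity $\epsilon_Y^k=\pi\epsilon_X^k\iota$ through to $k=n$, for which the commutation $e\epsilon_X=\epsilon_X e$ is indispensable---without it the middle factor $\iota\pi$ cannot be absorbed into a power of $\epsilon_X$, and the nilpotency degree cannot be controlled. Part (1) is then essentially a transcription of the kernel-lifting computation already performed for [E1], so I expect it to present no new difficulty.
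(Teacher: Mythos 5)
Your proposal is correct and follows essentially the same route as the paper: in (2) the paper also splits $e$ in $\mathcal{C}$, sets $\epsilon_Y=\pi\epsilon_X\iota$, and uses $e\epsilon_X=\epsilon_X e$ to get $\epsilon_Y^n=0$ (you merely spell out the telescoping identity $\epsilon_Y^k=\pi\epsilon_X^k\iota$ that the paper leaves implicit), and in (1) the paper likewise takes the kernel of the retraction in $\mathcal{C}$, lifts the differential through the kernel's universal property, and uses left-cancellability of the monic for both nilpotency and the compatibility of the induced factorizations.
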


\begin{proof}
(1) Let $\mathcal{C}$ be weakly idempotent complete and $r: B\to C$ a retraction in
$\mathcal{C}[\epsilon]^n$. Indeed, $r$ is a retraction in $\mathcal{C}$, then it has a kernel
$i: A\to B$ in $\mathcal{C}$. Since $r\varepsilon_Bi=\varepsilon_Cri=0$, there exists a morphism
$\varepsilon_A:A\to A$ in $\mathcal{C}$ such that $\varepsilon_Bi=i\varepsilon_A$. Because
$$i{\varepsilon_A}^n=\varepsilon_Bi{\varepsilon_A}^{n-1}=\cdots ={\varepsilon_B}^ni=0,$$
we have ${\varepsilon_A}^n=0$ and $(A,\varepsilon_A)\in \ob\mathcal{C}[\epsilon]^n$.
Now let $i': A'\to B$ be a morphism in $\mathcal{C}[\epsilon]^n$ such that $ri'=0$.
Since $i$ is a kernel of $r$ in $\mathcal{C}$, there exists a unique morphism
$u: A'\to A$ in $\mathcal{C}$ such that $iu=i'$. Since
$$i(\varepsilon_Au-u\varepsilon_{A'})=i\varepsilon_Au-iu\varepsilon_{A'}=
\varepsilon_Biu-i'\varepsilon_{A'}=\varepsilon_{B}i'-\varepsilon_{B}i'=0,$$
we have that $\varepsilon_Au-u\varepsilon_{A'}=0$ and $u$ is a morphism in $\mathcal{C}[\epsilon]^n$.
So $i$ is also a kernel of $r$ in $\mathcal{C}[\epsilon]^n$.

(2) Assume that $\mathcal{C}$ is idempotent complete. Let $e$ be an idempotent endomorphism of $(X,\epsilon_X)$.
Restricting to $\mathcal{C}$, there exists an object $Y\in\ob\mathcal{C}$ and morphisms
$$X\buildrel {\pi} \over \longrightarrow Y\buildrel {\iota} \over \longrightarrow X$$
in $\mathcal{C}$ such that $\iota\pi=e$ and $\pi\iota=1_Y$. Take $\epsilon_Y=\pi\epsilon_X\iota$.
Using $\epsilon_Xe=e\epsilon_X$ and ${\epsilon_X}^n=0$, we have ${\epsilon_Y}^n=0$. It is straightforward to check that
$\pi$ and $\iota$ become morphisms in $\mathcal{C}[\epsilon]^n$.
\end{proof}

Next, we turn to study whether $\mathcal{C}[\epsilon]^n$ is closed under direct summands whenever
$\mathcal{C}$ enjoys the same property. This is established in the following proposition.

\begin{proposition}\label{prop: 3.4}
Let $\mathcal{C}$ be idempotent complete. Then for any $X\in \ob\mathcal{C}$,
the direct summands of $X$ are in 1-1 correspondence with the direct summands of $TX$ up to conjugation.
\end{proposition}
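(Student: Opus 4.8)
The plan is to convert the statement about direct summands into one about idempotents and then read it off from the structure of the endomorphism ring of $TX$. Since $\mathcal{C}$ is idempotent complete and, by Lemma \ref{lem: 3.3}(2), so is $\mathcal{C}[\epsilon]^n$, every idempotent endomorphism splits in both categories; hence direct summands of $X$ (resp.\ of $TX$) are encoded, up to conjugation, by conjugacy classes of idempotents in $\End_{\mathcal{C}}(X)$ (resp.\ in $\End_{\mathcal{C}[\epsilon]^n}(TX)$). So it suffices to produce a natural bijection between these two sets of conjugacy classes, and to check that it is realized on objects by the assignment $Y\mapsto TY$.

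The first concrete step is to identify $\End_{\mathcal{C}[\epsilon]^n}(TX)$. Write $S=\End_{\mathcal{C}}(X)$. An endomorphism of $TX=(X^{\oplus n},\epsilon_{X^{\oplus n}})$ is an $n\times n$ matrix $M$ with entries in $S$ commuting with the shift $\epsilon_{X^{\oplus n}}$. Writing out $M\epsilon_{X^{\oplus n}}=\epsilon_{X^{\oplus n}}M$ entrywise forces $M$ to be constant along diagonals and to vanish strictly above the main diagonal, i.e.\ $M$ is a lower-triangular Toeplitz matrix $\sum_{k=0}^{n-1}a_k\,\epsilon_{X^{\oplus n}}^{\,k}$ with $a_k\in S$. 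The assignment $M\mapsto\sum_k a_k t^k$ is then a ring isomorphism $\End_{\mathcal{C}[\epsilon]^n}(TX)\cong S[t]/(t^n)$, under which $T(e)=\mathrm{diag}(e,\dots,e)$ is the constant element $e$ and reduction modulo the ideal $(t)$ amounts to reading off the diagonal term $a_0$.

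With this identification the bijection follows from the standard idempotent-lifting lemma applied to the surjection $S[t]/(t^n)\twoheadrightarrow S$, whose kernel $(t)$ is nilpotent because $t^n=0$. Concretely, every idempotent $e$ of $S$ lifts to an idempotent of $S[t]/(t^n)$ — the constant lift $T(e)$ already works, being idempotent whenever $e$ is — and any two idempotents of $S[t]/(t^n)$ whose reductions are conjugate in $S$ are themselves conjugate. Thus reduction modulo $(t)$ induces a bijection between conjugacy classes of idempotents in $S[t]/(t^n)$ and in $S$, with inverse induced by $e\mapsto T(e)$. Splitting the idempotent $T(e)$ and using that $T$ is additive, the direct summand of $TX$ attached to the class of $e$ is exactly $TY$, where $Y$ is the summand of $X$ attached to $e$; this is the asserted $1$-$1$ correspondence.

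The entrywise computation of the commutant of $\epsilon_{X^{\oplus n}}$ as the lower-triangular Toeplitz ring is routine but needs care with the boundary indices. The conceptual heart, and the step I expect to demand the most attention, is the uniqueness-up-to-conjugacy in the idempotent-lifting step: it is precisely this non-uniqueness of lifts that forces the correspondence to be stated ``up to conjugation,'' and that guarantees the map $Y\mapsto TY$ is both well defined and injective on the relevant classes.
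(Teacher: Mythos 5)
Your proof is correct, and it reaches the conclusion by a genuinely different finishing move than the paper. The first two steps coincide: the paper also reduces (via Lemma \ref{lem: 3.3} and idempotent completeness) to matching conjugacy classes of idempotents in $\End_{\mathcal{C}}(X)$ and $\End_{\mathcal{C}[\epsilon]^n}(TX)$, and its entrywise computation of $f\epsilon_{TX}=\epsilon_{TX}f$ is exactly your identification of the commutant as the lower-triangular Toeplitz matrices. The divergence is in how one shows every idempotent of $\End_{\mathcal{C}[\epsilon]^n}(TX)$ is conjugate to a constant-diagonal one $T(e)$. The paper never packages the commutant as the ring $S[t]/(t^n)$ with $S=\End_{\mathcal{C}}(X)$; instead it extracts from $f^2=f$ the relations $e^2=e$ and $ae+ea=a$ (hence $eae=0$) and conjugates by an explicit sequence of unipotent Toeplitz matrices $g_1,g_2,\dots$ that clear the subdiagonals one stripe at a time until $gfg^{-1}=T(e)$. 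Your route replaces this hands-on computation by the isomorphism $\End_{\mathcal{C}[\epsilon]^n}(TX)\cong S[t]/(t^n)$ together with the standard facts about the surjection $S[t]/(t^n)\twoheadrightarrow S$ with nilpotent kernel $(t)$: idempotents lift (here trivially, since the map splits via the constant embedding), idempotents congruent modulo a nilpotent ideal are conjugate (via the unit $u=ef+(1-e)(1-f)$), and units lift, which upgrades ``conjugate reductions'' to ``conjugate.'' This is cleaner, isolates the reason the correspondence is only up to conjugation, and generalizes immediately; the paper's explicit conjugations are in effect an unwinding of that lifting lemma in this special case and have the minor advantage of being self-contained and of producing the conjugating element concretely. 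Your closing observation that splitting $T(e)$ yields $TY$ when $Y$ splits $e$ is exactly what is needed to pass from idempotents back to summands, so the argument is complete.
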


\begin{proof}
By Lemma ~\ref{lem: 3.3} and \cite[Definition 6.1]{B}, it is enough to prove that the idempotents of
$\End_{\mathcal{C}}(X)$ are in 1-1 correspondence (up to conjugation) with the idempotents of
$\End_{\mathcal{C}[\epsilon]^n}(TX)$. We define a map $\theta: \End_{\mathcal{C}}(X)\to
\End_{\mathcal{C}[\epsilon]^n}(TX)$ by
$$f\mapsto  \left(
\begin{array}{cccc}
f& 0& \cdots &0 \\
0& f& \cdots &0 \\
\vdots&\vdots &\ddots&\vdots\\
0& 0& \cdots&f\\
\end{array}
\right).$$
It is an injective map sending idempotents to idempotents. Now given $f=(a_{ij})\in \End_{\mathcal{C}[\epsilon]^n}(TX)$,
the requirement $f\epsilon_{TX}=\epsilon_{TX}f$ translates to
{\tiny $$\left(\begin{array}{ccccc}
a_{11}& a_{12}& a_{13}& \cdots &a_{1n} \\
a_{21}& a_{22}& a_{23}& \cdots &a_{2n} \\
a_{31}& a_{32}& a_{33}& \cdots &a_{3n} \\
\vdots&\vdots &\vdots &\ddots&\vdots\\
a_{n1}& a_{n2}&a_{n3}& \cdots &a_{nn}\\
\end{array}\right)
\left(\begin{array}{ccccc}
0& 0& 0&\cdots &0 \\
1& 0& 0&\cdots &0 \\
0& 1& 0&\cdots &0 \\
\vdots&\vdots &\ddots& \ddots&\vdots\\
0& 0& \cdots&1 &0\\
\end{array}\right)$$
$$=\left(\begin{array}{ccccc}
a_{12}& a_{13}&\cdots & a_{1n}&0 \\
a_{22}& a_{23}&\cdots & a_{2n}&0 \\
a_{32}& a_{33}&\cdots & a_{3n}&0 \\
\vdots&\vdots &\ddots &\ddots&\vdots\\
a_{n2}& a_{n3}&\cdots & a_{nn}&0 \\
\end{array}
\right)=\left(
\begin{array}{ccccc}
0& 0&0 &\cdots& 0 \\
a_{11}& a_{12}&a_{13}&\cdots & a_{1n} \\
a_{21}& a_{22}&a_{23}&\cdots & a_{2n} \\
\vdots&\vdots &\vdots &\ddots&\vdots\\
a_{{n-1}\,1}& a_{n-1\,2}&a_{n-1\,3}&\cdots & a_{n-1\,n} \\
\end{array}
\right)$$
$$=\left(\begin{array}{ccccc}
0& 0& 0&\cdots &0 \\
1& 0& 0&\cdots &0 \\
0& 1& 0&\cdots &0 \\
\vdots&\vdots &\ddots& \ddots&\vdots\\
0& 0& \cdots&1 &0\\
\end{array}\right)
\left(\begin{array}{ccccc}
a_{11}& a_{12}& a_{13}& \cdots &a_{1n} \\
a_{21}& a_{22}& a_{23}& \cdots &a_{2n} \\
a_{31}& a_{32}& a_{33}& \cdots &a_{3n} \\
\vdots&\vdots &\vdots &\ddots&\vdots\\
a_{n1}& a_{n2}&a_{n3}& \cdots &a_{nn}\\
\end{array}
\right).$$}
It follows that $a_{ij}=0$ for $i<j$ and
$$a_{11}=a_{22}=\cdots =a_{nn},\;\; a_{21}=a_{32}=\cdots = a_{n\,n-1},$$
$$a_{31}=a_{42}=\cdots = a_{n\,n-2},\;\; \cdots,\;\;  a_{n-1\,1}=a_{n2}.$$
Furthermore suppose that $f$ is an idempotent of $\End_{\mathcal{C}[\epsilon]^n}(TX)$.
Then the equality $f^2=f$ implies that $f=(a_{ij})$ may has the following form
$$f=\left(\begin{array}{ccccc}
e& 0& 0& \cdots &0 \\
a& e& 0& \cdots &0 \\
a_{31}& a& e& \cdots &0 \\
\vdots&\vdots &\vdots &\ddots&\vdots\\
a_{n1}& a_{n2}&a_{n3}& \cdots &e\\
\end{array}
\right)$$
with $e^2=e$ and $ae+ea=a$. Then $eae=0$. Let
$$g_1=\left(\begin{array}{ccccc}
1& 0& 0& \cdots &0 \\
ea-ae& 1& 0& \cdots &0 \\
0& ea-ae& 1& \cdots &0 \\
\vdots&\vdots &\vdots &\ddots&\vdots\\
0& 0&\cdots & ea-ae &1\\
\end{array}\right).$$
Then $g_1$ is obviously invertible with
$$g_1^{-1}=\left(\begin{array}{ccccc}
1& 0& 0& \cdots &0 \\
ae-ea& 1& 0& \cdots &0 \\
(ae-ea)^2& ae-ea& 1& \cdots &0 \\
\vdots&\vdots &\vdots &\ddots&\vdots\\
(ae-ea)^{n-1}& (ae-ea)^{n-2}&\cdots & ae-ea &1\\
\end{array}\right).$$
Hence we get
$$g_1fg_1^{-1}=\left(
\begin{array}{ccccc}
e& 0& 0& \cdots &0 \\
0& e& 0& \cdots &0 \\
a_{31}'& 0& e& \cdots &0 \\
\vdots&\vdots &\vdots &\ddots&\vdots\\
a_{n1}'& a_{n2}'&\cdots & 0 &e\\
\end{array}\right).$$Continuing in this way, we may find an automorphism $g$ of
$\End_{\mathcal{C}[\epsilon]^n}(TX)$ such that
$$gfg^{-1}=\left(
\begin{array}{ccccc}
e& 0& 0& \cdots &0 \\
0& e& 0& \cdots &0 \\
0& 0& e& \cdots &0 \\
\vdots&\vdots &\vdots &\ddots&\vdots\\
0& 0&\cdots & 0 &e\\
\end{array}\right).$$
\end{proof}

The following observation is useful in the sequel.

\begin{lemma} \label{lem: 3.5}
Let $(\mathcal{C},\mathscr{E})$ be an exact category and $(X, \epsilon_X)\in \ob\mathcal{C}[\epsilon]^n$.
Then there exists two short exact sequences in $\mathcal{C}[\epsilon]^n$ as follows:
\begin{equation}\label{diag: (3.3)}
\begin{gathered}
X'\buildrel {i'_X} \over \longrightarrow TX \buildrel {p'_X} \over \longrightarrow X,
\end{gathered}
\end{equation}
where $X'=X^{\oplus n-1}$,
$$i'_X=\left(\begin{array}{ccccc}
0& 0&  \cdots &-\epsilon_X \\
0& 0&  \cdots &1 \\
\vdots&\iddots  &\iddots&\vdots\\
-\epsilon_X& 1&  \cdots &0 \\
1& 0&  \cdots & 0 \\
\end{array}
\right)_{n\times (n-1)}, p'_X=(1,\epsilon_X,{\epsilon_{X}}^2,\cdots, {\epsilon_{X}}^{n-1}),$$
$$\varepsilon_{X'}=\left(\begin{array}{ccccc}
-\epsilon_X& 1& 0 & \cdots &0 \\
-{\epsilon_X}^2& 0& 1 & \cdots &0 \\
\vdots&\vdots &\vdots &\ddots &\vdots\\
-{\epsilon_X}^{n-2}& 0& 0 & \cdots &1 \\
-{\epsilon_X}^{n-1}& 0& 0 & \cdots &0 \\
\end{array}
\right)_{(n-1)\times (n-1)}, \,\, and$$
\begin{equation}\label{diag: (3.4)}
\begin{gathered}
X\buildrel {i''_X} \over \longrightarrow TX \buildrel {p''_X} \over \longrightarrow X'',
\end{gathered}
\end{equation}
where $X''=X^{\oplus n-1}$,
$$i''_X=\left(\begin{array}{c}
{\epsilon_X}^{n-1} \\
{\epsilon_X}^{n-2} \\
\vdots\\
{\epsilon_X} \\ \\
1 \\
\end{array}\right),
p''_X=\left(\begin{array}{ccccc}
0 & 0 & \cdots & 1 & -{\epsilon_X} \\
\vdots & \vdots & \iddots & \vdots & \vdots \\
0 & 1 & \cdots & 0 & 0 \\
1 & -{\epsilon_X} & \cdots & 0 & 0\\
\end{array}
\right)_{(n-1)\times n},
$$
$$\varepsilon_{X''}=\left(\begin{array}{ccccc}
0& 1& 0 & \cdots &0 \\
0& 0& 1 & \cdots &0 \\
\vdots&\vdots &\vdots &\ddots &\vdots\\
0& 0& 0 & \cdots &1 \\
-{\epsilon_X}^{n-1}& -{\epsilon_X}^{n-2}& -{\epsilon_X}^{n-3} & \cdots &-{\epsilon_X} \\
\end{array}
\right)_{(n-1)\times (n-1)}.$$
\end{lemma}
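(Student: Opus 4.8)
The plan is to read the statement through the exact structure $\mathscr{E}_F$ produced in Lemma~\ref{lem: 3.2}. By definition a composable pair lies in $\mathscr{E}_F$ precisely when both of its maps are morphisms of $\mathcal{C}[\epsilon]^n$ and its image under $F$ is a short exact sequence of $(\mathcal{C},\mathscr{E})$. Here $TX$ abbreviates $T(FX)=(X^{\oplus n},\epsilon_{X^{\oplus n}})$, with $\epsilon_{X^{\oplus n}}$ the nilpotent shift. Since the underlying sequences in $\mathcal{C}$ will turn out to be split, and every split short exact sequence is admissible in an arbitrary exact category (see \cite{B}), it suffices to carry out two checks for each of \eqref{diag: (3.3)} and \eqref{diag: (3.4)}: that the displayed maps commute with the relevant differentials, and that after applying $F$ the sequence is isomorphic to a canonical split sequence $X^{\oplus n-1}\to X^{\oplus n}\to X$ (resp. $X\to X^{\oplus n}\to X^{\oplus n-1}$).

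First I would check the morphism conditions. Every entry occurring in $i'_X,p'_X,i''_X,p''_X$ is a power of $\epsilon_X$, so all products below take place in the commutative subring of $\End_{\mathcal{C}}(X)$ generated by $\epsilon_X$, subject only to $\epsilon_X^n=0$. For $p'_X=(1,\epsilon_X,\dots,\epsilon_X^{n-1})$ one computes $\epsilon_X p'_X=(\epsilon_X,\dots,\epsilon_X^{n-1},0)=p'_X\epsilon_{X^{\oplus n}}$, using $\epsilon_X^n=0$ in the last slot; dually $\epsilon_{X^{\oplus n}}i''_X=i''_X\epsilon_X$ is a one-line shift computation. The relations $\epsilon_{X^{\oplus n}}i'_X=i'_X\varepsilon_{X'}$ and $\varepsilon_{X''}p''_X=p''_X\epsilon_{X^{\oplus n}}$ are the two genuinely matrix-level identities; they are settled by comparing entries along anti-diagonal bands, the $-\epsilon_X$ corrections being exactly what absorbs the shift. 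Once these hold, I would deduce $\varepsilon_{X'}^n=0$ and $\varepsilon_{X''}^n=0$ for free, exactly as in the proofs of Lemma~\ref{lem: 3.2} and Lemma~\ref{lem: 3.3}: from $\epsilon_{X^{\oplus n}}i'_X=i'_X\varepsilon_{X'}$ one gets $i'_X\varepsilon_{X'}^n=\epsilon_{X^{\oplus n}}^n i'_X=0$, and $i'_X$ is (split) monic, so $\varepsilon_{X'}^n=0$; the epic $p''_X$ handles $\varepsilon_{X''}$ symmetrically. This confirms that $X'$ and $X''$ are genuine objects of $\mathcal{C}[\epsilon]^n$.

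Next I would establish split exactness in $\mathcal{C}$. The coordinate inclusion $s=(1,0,\dots,0)^{\top}\colon X\to X^{\oplus n}$ satisfies $p'_X s=1$, and a direct check gives $p'_X i'_X=0$, its $j$-th entry being $\epsilon_X^{n-j}-\epsilon_X^{n-j}$. Augmenting $i'_X$ by the column $s$ yields an $n\times n$ matrix whose $1$'s fill the whole anti-diagonal and whose only other nonzero entries are the $-\epsilon_X$'s just off it; after reversing the order of its rows it becomes lower triangular with identity entries on the diagonal, hence it is invertible. Composing $p'_X$ with this isomorphism sends it to the projection $(0,1)$, so under $F$ the pair \eqref{diag: (3.3)} is isomorphic to the canonical split sequence $X^{\oplus n-1}\to X^{\oplus n-1}\oplus X\to X$. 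The second sequence is dual: $e_n^{*}=(0,\dots,0,1)$ retracts $i''_X$, one has $p''_X i''_X=0$, and $\binom{e_n^{*}}{p''_X}$ is invertible by the same anti-diagonal argument, identifying \eqref{diag: (3.4)} under $F$ with $X\to X\oplus X^{\oplus n-1}\to X^{\oplus n-1}$. Since $\mathscr{E}$ contains every split short exact sequence and is closed under isomorphism, both images lie in $\mathscr{E}$, so \eqref{diag: (3.3)} and \eqref{diag: (3.4)} lie in $\mathscr{E}_F$, i.e. are short exact in $\mathcal{C}[\epsilon]^n$.

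The main obstacle is purely bookkeeping: verifying the two matrix identities $\epsilon_{X^{\oplus n}}i'_X=i'_X\varepsilon_{X'}$ and $\varepsilon_{X''}p''_X=p''_X\epsilon_{X^{\oplus n}}$ against the explicit $\varepsilon_{X'}$ and $\varepsilon_{X''}$, since there one cannot reduce to a $1\times n$ or $n\times 1$ shape and must track each anti-diagonal band. Everything else—the splitness and the nilpotency of the induced differentials—follows formally from monic/epic cancellation together with $\epsilon_{X^{\oplus n}}^n=0$, so no further conceptual input is required.
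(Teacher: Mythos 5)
Your proposal is correct and follows essentially the same route as the paper: both arguments reduce to showing that the image of each sequence under $F$ is isomorphic to the canonical split short exact sequence $X^{\oplus n-1}\to X^{\oplus n}\to X$ (resp.\ its dual), hence lies in $\mathscr{E}$ and therefore in $\mathscr{E}_F$; the paper does this via an explicit anti-diagonal isomorphism $h\colon TX\to TX$ intertwining $(i'_X,p'_X)$ with $(i,(0,\dots,0,1))$, while you augment $i'_X$ by a section and invert the resulting unipotent matrix, which is the same computation in different packaging. You also spell out the morphism-compatibility checks and the nilpotency of $\varepsilon_{X'}$, $\varepsilon_{X''}$ that the paper dismisses as routine, and your cancellation argument for nilpotency is sound because the (split) monicity of $i'_X$ in $\mathcal{C}$ is established independently of those facts.
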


\begin{proof}
We just prove the existence of (3.3). Clearly $(\mathcal{C},\mathscr{E}_F)$ is an exact category
by Lemma ~\ref{lem: 3.2}(1). It is routine to check that $(X', \varepsilon_{X'})$ is an $n$-th differential
object and (3.3) is a sequence in $\mathcal{C}[\epsilon]^n$. We also have the following diagram
$$\xymatrix{
 X' \ar[rrrr]^{i'_X} \ar[d]^{1_{X'}}& & & & TX \ar[d]^h \ar[rrrr]^{p'_X} & & & & X \ar[d]^{1_X} \\
 X'   \ar[rrrr]^{i} & & & &TX  \ar[rrrr]^{p=(0,0,\cdots 0, 1)} & & & & X }$$ in $\mathcal{C}$ with
$$i=\left(\begin{array}{ccccc}
1& 0&  \cdots &0 \\
0& 1&  \cdots &0 \\
\vdots&\vdots  &\ddots&\vdots\\
0& 0&  \cdots &1 \\
0& 0&  \cdots & 0 \\
\end{array}
\right)_{n\times (n-1)},
h=\left(\begin{array}{ccccc}
0& 0& \cdots &0 &1 \\
0& 0& \cdots & 1 &\epsilon_X \\
\vdots&\vdots &\iddots &\iddots &\vdots\\
0& 1& \cdots & {\epsilon_X}^{n-3} &{\epsilon_X}^{n-2} \\
1& \epsilon_{X}& \cdots & {\epsilon_X}^{n-2} & {\epsilon_X}^{n-1} \\
\end{array}
\right)_{n\times n}.
$$ Since $h$ is an isomorphism and
$$X'\buildrel {i} \over \longrightarrow TX\buildrel {p} \over \longrightarrow X$$
is a short exact sequence in $\mathcal{C}$, we obtain that
$$X'\buildrel {i'_X} \over \longrightarrow TX\buildrel {p'_X} \over \longrightarrow X$$
is a short exact sequence in $\mathcal{C}[\epsilon]^n$.
\end{proof}

Now, we are able to describe completely the projective and injective objects of $\mathcal{C}[\epsilon]^n$.

\begin{proposition} \label{prop: 3.6}
Let $(\mathcal{C},\mathscr{E})$ be an idempotent complete exact category. Then we have
\begin{enumerate}
\item $P$ is a projective object of $(\mathcal{C}[\epsilon]^n, \mathscr{E}_F)$
if and only if $P\cong T(Q)$ for some projective object $Q$ of $\mathcal{C}$.
\item $I$ is an injective object of $(\mathcal{C}[\epsilon]^n, \mathscr{E}_F)$
if and only if $I\cong T(E)$ for some injective object $E$ of $\mathcal{C}$.
\end{enumerate}
\end{proposition}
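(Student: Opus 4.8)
The plan is to deduce everything from the two-sided adjunction of Proposition \ref{prop: 3.1} together with the exactness of $F$ and $T$ (Lemma \ref{lem: 3.2}), the explicit short exact sequences of Lemma \ref{lem: 3.5}, and the summand correspondence of Proposition \ref{prop: 3.4}. The key observation is the following pair of elementary principles about exact categories: if $L\dashv R$ is an adjunction of additive functors and the right adjoint $R$ is exact, then the left adjoint $L$ preserves projectives; dually, if the left adjoint $L$ is exact, then the right adjoint $R$ preserves injectives. Both are immediate from the natural isomorphism of $\Hom$-functors, since an exact functor carries admissible epics (resp. monics) to admissible epics (resp. monics). Because Proposition \ref{prop: 3.1} supplies both $F\dashv T$ and $T\dashv F$, and both $F$ and $T$ are exact, I obtain at once all four statements: $T$ preserves projectives and injectives, and $F$ preserves projectives and injectives.

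These preservation facts immediately settle the ``if'' directions. If $Q$ is projective in $\mathcal{C}$, then $T(Q)$ is projective in $(\mathcal{C}[\epsilon]^n,\mathscr{E}_F)$ because $T\dashv F$ with $F$ exact; if $E$ is injective in $\mathcal{C}$, then $T(E)$ is injective because $F\dashv T$ with $F$ exact. This proves sufficiency in both (1) and (2).

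For the ``only if'' direction of (1), suppose $P$ is projective in $\mathcal{C}[\epsilon]^n$. First, $F(P)$ is projective in $\mathcal{C}$, since $F\dashv T$ and $T$ is exact. Next I invoke the short exact sequence \eqref{diag: (3.3)} of Lemma \ref{lem: 3.5} applied to $X=P$: the admissible epic $p'_P\colon T(F(P))\to P$ is split by projectivity of $P$ (the identity of $P$ lifts along $p'_P$), so $P$ is a retract, hence by idempotent completeness of $\mathcal{C}[\epsilon]^n$ (Lemma \ref{lem: 3.3}) a direct summand, of $T(F(P))$. Proposition \ref{prop: 3.4} then identifies this summand: there is a direct summand $Q$ of $F(P)$ with $P\cong T(Q)$. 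Since $Q$ is a direct summand of the projective object $F(P)$, it is itself projective, as required. The ``only if'' direction of (2) is entirely dual: for $I$ injective, $F(I)$ is injective ($T\dashv F$, $T$ exact), the short exact sequence \eqref{diag: (3.4)} provides a split admissible monic $i''_I\colon I\to T(F(I))$, and Proposition \ref{prop: 3.4} yields $I\cong T(E)$ with $E$ a direct summand of the injective object $F(I)$, hence injective.

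The routine verifications (the two preservation principles and the lifting of identities along split (co)sections) are straightforward, so I expect the only genuinely delicate point to be the passage from ``$P$ is a direct summand of $T(F(P))$'' to ``$P\cong T(Q)$ for a summand $Q$ of $F(P)$''. This is exactly where Proposition \ref{prop: 3.4} is indispensable: a priori an idempotent of $\End_{\mathcal{C}[\epsilon]^n}(T(F(P)))$ need not be diagonal, and it is the conjugation argument in Proposition \ref{prop: 3.4}, using idempotent completeness of $\mathcal{C}$, that guarantees every such idempotent is conjugate to one of the form $\theta(e)$, and hence that the corresponding summand is again of the form $T(Q)$.
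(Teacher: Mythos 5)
Your proposal is correct and follows essentially the same route as the paper: the sufficiency and the projectivity of $F(P)$ come from the two adjunctions of Proposition \ref{prop: 3.1} combined with the exactness statements of Lemma \ref{lem: 3.2}, the splitting of the admissible epic $p'_P\colon TF(P)\to P$ from Lemma \ref{lem: 3.5} exhibits $P$ as a summand of $TF(P)$, and Proposition \ref{prop: 3.4} identifies that summand as $T(Q)$ for a summand $Q$ of $F(P)$. Your explicit isolation of the "adjoint-to-an-exact-functor preserves projectives/injectives" principle is just a cleaner packaging of the Hom-isomorphism computations the paper writes out directly.
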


\begin{proof}
It follows from Lemmas ~\ref{lem: 3.2} and ~\ref{lem: 3.3} that
$(\mathcal{C}[\epsilon]^n,\mathscr{E}_F)$ is an idempotent complete exact category.

(1) Let $Q$ be a projective object of $\mathcal{C}$ and $P\cong T(Q)$. Since
$$\Hom_{\mathcal{C}[\epsilon]^n}(P,-)\cong \Hom_{\mathcal{C}[\epsilon]^n}(T(Q),-)\cong \Hom_{\mathcal{C}}(Q,F(-))$$
by Proposition ~\ref{prop: 3.1} and since $F$ is an exact functor by Lemma ~\ref{lem: 3.2}(2),
$P$ is a projective object of $(\mathcal{C}[\epsilon]^n, \mathscr{E}_F)$.
Conversely, let $P$ be a projective object of $(\mathcal{C}[\epsilon]^n, \mathscr{E}_F)$. Since
$$\Hom_{\mathcal{C}}(F(P),-)\cong \Hom_{\mathcal{C}[\epsilon]^n}(P,T(-))$$
by Proposition ~\ref{prop: 3.1} and since $T$ is an exact functor by Lemma ~\ref{lem: 3.2}(3),
$F(P)$ is a projective object of $\mathcal{C}$. By Lemma ~\ref{lem: 3.5}, there exists a short exact sequence
$$P'\buildrel {i'_P} \over \longrightarrow TF(P) \buildrel {p'_P} \over \longrightarrow P$$
in $\mathcal{C}[\epsilon]^n$. It splits and $P$ is isomorphic to a direct summand of $TF(P)$.
It follows from Proposition ~\ref{prop: 3.4} that there exists a projective object $Q$ of $\mathcal{C}$
such that $P\cong T(Q)$.

(2) It is dual to (1).
\end{proof}

\subsection{Flat and Gorenstein flat modules}

We now use Proposition ~\ref{prop: 3.6} to prove the following corollary.

\begin{corollary}\label{cor: 3.7}
Let $R$ be a ring and $M\in \Mod(R[t]/(t^n))$. Then
$M$ is flat in $\Mod(R[t]/(t^n))$ if and only if $M\cong T(N)$ for some flat module $N$ in $\Mod R$.
\end{corollary}

\begin{proof}
If $M$ is flat in $\Mod(R[t]/(t^n))$, then $M\cong {\lim\limits_{\longrightarrow }}P_i$
with $\{P_i\}$ a family of projective modules in $\Mod(R[t]/(t^n))$. By Proposition ~\ref{prop: 3.6}(1),
there exists a projective left $R$-module $Q_i$ such that $P_i=T(Q_i)$ for any $i$.
Since the functor $T$ preserves direct limits, we have
$$M\cong {\lim\limits_{\longrightarrow }}P_i\cong {\lim\limits_{\longrightarrow }}T(Q_i)
\cong T({\lim\limits_{\longrightarrow }}Q_i).$$
As ${\lim\limits_{\longrightarrow }}Q_i$ is flat in $\Mod R$, the sufficiency follows.

Conversely, if $M\cong T(N)$ for some flat object $N$ in $\Mod R$, then $N\cong {\lim\limits_{\longrightarrow }}Q_i$
with $\{Q_i\}$ a family of projective modules in $\Mod R$. Thus
$$M\cong T(N)\cong T({\lim\limits_{\longrightarrow }}Q_i)\cong {\lim\limits_{\longrightarrow }}T(Q_i).$$
By Proposition ~\ref{prop: 3.6}(1), $T(Q_i)$ is projective in $\Mod(R[t]/(t^n))$. So $M$ is flat in $\Mod(R[t]/(t^n))$.
\end{proof}

For any $m\geqslant 0$, recall that a left and right Noetherian ring $R$ is called
{\it $m$-Gorenstein} if the left and right self-injective dimensions of $R$ are at most $m$,
and $R$ is called {\it Gorenstein} if it is $m$-Gorenstein for some $m$.
For a ring $R$, we use $\Cen(R)$ to denote the center of $R$.
Recall that a ring $R$ is called an {\it Artin algebra} if it is a finitely generated $\Cen(R)$-module
with $\Cen(R)$ a commutative Artin ring. Clearly a ring $R$ is an Artin algebra if and only if
it is a finitely generated $C$-module for some commutative Artin ring $C$ (\cite{AR1}).

\begin{corollary}\label{cor: 3.8}
For any ring $R$, we have
\begin{enumerate}
\item $R$ is left (resp. right) Noetherian if and only if $R[t]/(t^n)$ is left (resp. right) Noetherian.
\item For any $m\geqslant 0$, $R$ is $m$-Gorenstein if and only if $R[t]/(t^n)$ is $m$-Gorenstein.
\item $R$ is left (resp. right) perfect if and only if $R[t]/(t^n)$ is left (resp. right) perfect.
\item $R$ is left (resp. right) Artinian if and only if $R[t]/(t^n)$ is left (resp. right) Artinian.
\item $R$ is an Artin algebra if and only if $R[t]/(t^n)$ is an Artin algebra.
\item $R$ is left (resp. right) coherent if and only if $R[t]/(t^n)$ is left (resp. right) coherent.
\end{enumerate}
\end{corollary}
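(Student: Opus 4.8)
The plan is to deduce all six equivalences from two complementary descriptions of $S:=R[t]/(t^n)$. Under the identification $\Mod S\cong(\Mod R)[\epsilon]^n$, the ordinary abelian exact structure on $\Mod S$ coincides with $\mathscr{E}_F$ (a sequence of $S$-modules is exact exactly when it is exact after forgetting to $\Mod R$), so Proposition~\ref{prop: 3.6} and Corollary~\ref{cor: 3.7} describe the genuine projective, injective and flat $S$-modules as precisely $T$ applied to the projective, injective and flat $R$-modules. Complementarily, $R\hookrightarrow S$ is a subring over which $S$ is free of rank $n$, and $R\cong S/(t)$ is a quotient ring; moreover ${}_{S}S\cong T({}_{R}R)$ and $S_S\cong T(R_R)$. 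I will use freely that, by Proposition~\ref{prop: 3.1}, each of $F$ and $T$ is simultaneously a left and a right adjoint, hence both are exact and preserve projectives, injectives, products, coproducts and direct limits; in addition $F$ preserves flatness (since ${}_{R}S$ and $S_R$ are free, by transitivity of flatness) and $FT=(-)^{\oplus n}$.

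For the chain-condition parts (1) and (4), I would argue elementarily: the left ideals of $S$ are exactly the $t$-stable left $R$-submodules of $S$, and since $S\cong R^{\oplus n}$ as a left $R$-module, the ACC (resp.\ DCC) on left $R$-submodules forces the ACC (resp.\ DCC) on left ideals of $S$, giving the forward implications; the converses are immediate because $R\cong S/(t)$ is a quotient ring, over which the chain conditions descend. The right-handed assertions are identical using $S^{\op}=R^{\op}[t]/(t^n)$. For (5), if $R$ is finitely generated over a commutative Artinian ring $C\subseteq\Cen(R)$, then $t\in\Cen(S)$ gives $C\subseteq\Cen(S)$ and $S=\sum_i Rt^i$ is again finitely generated over $C$; conversely the image $\overline{C}$ of $C$ in $R=S/(t)$ is a commutative Artinian central subring over which $R$ is finitely generated. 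Invoking the cited characterization of Artin algebras then settles (5).

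The parts (3) and (6) I would handle by the flat-versus-projective machinery. For (3) I use that $R$ is left perfect iff every flat left $R$-module is projective: if $R$ is left perfect and $M$ is flat over $S$, then Corollary~\ref{cor: 3.7} writes $M\cong T(N)$ with $N$ flat over $R$, hence projective, so $M$ is projective by Proposition~\ref{prop: 3.6}(1); conversely, if $S$ is left perfect and $N$ is flat over $R$, then $T(N)$ is flat hence projective over $S$, and applying $F$ shows $N^{\oplus n}=FT(N)$ is projective over $R$, so its summand $N$ is. For (6) I invoke Chase's theorem in the form ``$R$ is left coherent iff every product of flat right $R$-modules is flat'' and push products through $T$ (which preserves them): the forward direction uses $\prod_j T(N_j)\cong T(\prod_j N_j)$ together with the fact that $T$ sends flat modules to flat modules, while the backward direction uses that $F$ preserves flatness and $FT=(-)^{\oplus n}$ to reflect flatness of $\prod_j N_j$.

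The heart of the statement is (2), which I expect to be the main obstacle, since it requires control of injective and projective \emph{dimensions}, not merely of the classes of injectives. The key lemma is that $\id_S T(E)=\id_R E$ for every $R$-module $E$: the inequality $\leqslant$ follows by applying the exact, injective-preserving functor $T$ to an injective resolution of $E$, and the inequality $\geqslant$ follows by applying the exact, injective-preserving functor $F$ to an injective resolution of $T(E)$, since $FT(E)=E^{\oplus n}$ has $E$ as a direct summand, whence $\id_R E\leqslant\id_R E^{\oplus n}\leqslant\id_S T(E)$. Taking $E={}_{R}R$ and using ${}_{S}S\cong T({}_{R}R)$ gives $\id({}_{S}S)=\id({}_{R}R)$, and the symmetric argument over $S^{\op}=R^{\op}[t]/(t^n)$ gives $\id(S_S)=\id(R_R)$; combined with the Noetherian equivalence of part (1), this yields that $R$ is $m$-Gorenstein iff $S$ is. The only delicate point is guaranteeing that the resolution-transfer genuinely computes the dimension on both sides, and this is exactly where the ambidexterity of the $(F,T)$ and $(T,F)$ adjunctions---that both functors are exact and preserve injectives---is indispensable.
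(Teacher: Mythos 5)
Your proposal is correct, and parts (3) and (6) coincide with the paper's argument (the same flat-equals-projective criterion for perfectness via Corollary~\ref{cor: 3.7} and Proposition~\ref{prop: 3.6}, and the same appeal to Chase's theorem pushed through the product-preserving functor $T$). Where you diverge is in (1), (4), (5) and, most substantially, (2). For (1) the paper does not argue with chain conditions on ideals at all: it invokes Bass's characterization of left Noetherian rings via direct sums of injectives being injective, and transfers that property across $T$ using Proposition~\ref{prop: 3.6}(2); for (4) it then quotes Bass--Bj\"ork (Artinian $=$ Noetherian $+$ perfect on the opposite side) and combines (1) with (3). Your elementary route --- left ideals of $S=R[t]/(t^n)$ are the $t$-stable $R$-submodules of ${}_RS\cong R^{\oplus n}$, and chain conditions descend to the quotient $R\cong S/(t)$ --- is shorter and avoids both citations, at the cost of leaving the category-theoretic framework the paper is advertising. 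For (5) the paper computes $\Cen(S)=\Cen(R)[t]/(t^n)$ and applies (4) to the centers, whereas you use the ``finitely generated over \emph{some} commutative Artin ring'' form of the definition; both work. The real difference is (2): the paper simply cites \cite[Theorem 3.11(iii)]{XYY} (together with \cite[Theorem 12.3.1]{EJ}) for the transfer of the Gorenstein property, while you prove the needed input $\id_S T(E)=\id_R E$ from scratch using that $T$ and $F$ are each simultaneously left and right adjoint to the other (hence exact and injective-preserving) and that $FT(E)=E^{\oplus n}$, then specialize to $E={}_RR$ via ${}_SS\cong T({}_RR)$. That argument is sound and makes the corollary self-contained at exactly the point where the paper outsources the work; it is arguably the more informative proof, since it yields the stronger statement $\id({}_SS)=\id({}_RR)$ and $\id(S_S)=\id(R_R)$ rather than only the two-sided bound by $m$.
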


\begin{proof}
(1) By \cite[Theorem 1.1]{Ba2}, it suffices to show that any direct sum of injective modules
in $\Mod R$ is injective if and only if any direct sum of injective modules in $\Mod (R[t]/(t^n))$ is injective.

Let $R$ be left Noetherian and $\{I_i\}_{i\in I}$ a family of injective modules in $\Mod (R[t]/(t^n))$.
By Proposition ~\ref{prop: 3.6}(2), we have $I_i\cong T(E_i)$ for some injective module $E_i$ in $\Mod R$
for any $i\in I$. By \cite[Theorem 1.1]{Ba2}, $\oplus_{i\in I}E_i$ is injective in $\Mod R$.
Note that the functor $T$ preserves direct sums by Proposition ~\ref{prop: 3.1}. So
$$\oplus_{i\in I}I_i\cong \oplus_{i\in I}T(E_i)\cong T(\oplus_{i\in I}E_i)$$
is injective by Proposition ~\ref{prop: 3.6}(2) again.

Conversely, let $R[t]/(t^n)$ be left Noetherian and $\{E_i\}_{i\in I}$ a family of injective modules in $\Mod R$.
Then $T(\oplus_{i\in I}E_i)\cong \oplus_{i\in I}T(E_i)$ is injective in $\Mod (R[t]/(t^n))$.
By Proposition ~\ref{prop: 3.1} we have
$$\Hom_{R}(-, FT(\oplus_{i\in I}E_i))\cong \Hom_{R[\epsilon]^n}(T(-),T(\oplus_{i\in I}E_i)),$$
So $FT(\oplus_{i\in I}E_i)$ is injective in $\Mod R$. Furthermore, since $F$ is the forgetful functor,
$\oplus_{i\in I}E_i$ is injective in $\Mod R$.

(2) By (1), we have that $R$ is left and right Noetherian if and only if $R[t]/(t^n)$
is left and right Noetherian. Now using \cite[Theorem 12.3.1]{EJ} and \cite[Theorem 3.11(iii)]{XYY},
we get that $R$ is $m$-Gorenstein if and only if $R[t]/(t^n)$ is $m$-Gorenstein.

(3) We know from \cite[Theorem 28.4]{AF} that $R$ is left perfect if and only if every flat left $R$-module is projective.
Assume that $R[t]/(t^n)$ is left perfect and $M\in\Mod R$ is flat. Then $T(M)$ is a flat module in $\Mod (R[t]/(t^n))$ by
Corollary ~\ref{cor: 3.7}. So $T(M)$ is projective and there exists a projective left $R$-module $Q$ such that $T(M)\cong T(Q)$.
Thus $FT(M)$ is projective in $\Mod R$, and therefore $M$ is a projective left $R$-module. The converse may be proved similarly.

(4) Note that a ring $R$ is left (resp. right) Artinian if and only if it is left (resp. right) Noetherian and
right (resp. left) perfect (c.f. \cite[Theorem P]{Ba1} and \cite[Theorem 6]{Bj}). Thus the assertion follows from (1) and (3).

(5) It is easy to verify that $\Cen(R[t]/(t^n))=\Cen(R)[t]/(t^n)$. Thus by (4), we have that $\Cen(R)$ is a commutative Artinian ring
if and only if $\Cen(R[t]/(t^n))$ is a commutative Artinian ring. In addition, we have that $R$ is a finitely generated $\Cen(R)$-module
if and only if $R[t]/(t^n)$ is a finitely generated $\Cen(R[t]/(t^n))$-module. The assertion follows.

(6) By \cite[Theorem 2.1]{C}, $R$ is right coherent if and only if the direct product of any family of flat left $R$-modules is flat.
Assume that $R[t]/(t^n)$ is right coherent and $\{M_i\}_{i\in I}$ is a family of flat left $R$-modules. Since the functor $T$ preserves
direct products, $T(\prod_{i\in I} M_i)\cong \prod_{i\in I} T(M_i)$ is flat. By Corollary ~\ref{cor: 3.7}, there exists a flat left
$R$-modules $S$ such that $T(\prod_{i\in I} M_i)\cong T(S)$. Thus $\prod_{i\in I} M_i$ is also flat as a left $R$-module.
The converse may be proved similarly.
\end{proof}

\begin{remark}\label{rem: 3.9}
When we take $R$ in Corollary ~\ref{cor: 3.8}(2) to be $KQ$, that is, the path algebra over a field $K$,
then $KQ$ is 1-Gorenstein, and so $KQ[t]/(t^2)$ is also 1-Gorenstein. It recovers part of \cite[Theorem 2]{RZ}.
\end{remark}

We recall from \cite{EJ, EJT} that a left $R$-module $M$ is called {\it Gorenstein flat} if there exists an exact sequence
$${\bf F}:\cdots \to F_1\to F_0\to F^0\to F^1\to \cdots$$
in $\Mod R$ with all $F_i,F^i$ flat such that $M\cong \im (F_0\to F^0)$ and $E\otimes_{R}{\bf F}$ is exact for any injective
right $R$-module $E$. Furthermore, the {\it Gorenstein flat dimension} $\Gfd_RM$ of a left $R$-module $M$ is defined to be
$\inf\{n\geqslant 0\mid$ there exists an exact sequence
$$0 \to G_{n}\to G_{n-1}\to \cdots \to G_0\to M\to 0$$
in $\Mod R$ with all $G_i$ Gorenstein flat$\}$. If no such an integer exists, then set $\Gfd_RM=\infty$.
We write $(-)^+:=\Hom_{\mathbb{Z}}(-,\mathbb{Q}/\mathbb{Z})$, where $\mathbb{Z}$ is the additive
group of integers and $\mathbb{Q}$ is the additive group of rational numbers.

\begin{theorem}\label{thm: 3.10}
Let $R$ be a ring and $M\in \Mod(R[t]/(t^n))$. Then $M$ is Gorenstein flat in $\Mod(R[t]/(t^n))$ if and only if
$M$ is Gorenstein flat in $\Mod R$.
\end{theorem}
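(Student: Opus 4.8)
Write $S=R[t]/(t^n)$. A direct inspection of the shift endomorphism defining $T$ shows that $T(N)\cong S\otimes_R N$ as left $S$-modules, so $T$ is extension of scalars and $F$ is restriction of scalars; this makes the adjunctions of Proposition~\ref{prop: 3.1} the usual ones. Since $t$ is central we have $S^{\op}=R^{\op}[t]/(t^n)$, so applying Proposition~\ref{prop: 3.6}(2) to $\Mod R^{\op}$ identifies the injective right $S$-modules with the modules $T_r(E)=E\otimes_R S$ for $E$ an injective right $R$-module, where $T_r$ is the augmenting functor on right modules. The computational core is the natural isomorphism $T_r(E)\otimes_S L\cong E\otimes_R F(L)$ for every left $S$-module $L$, coming from associativity of the tensor product together with $T_r(E)\otimes_S S\cong E$. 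Finally, by Corollary~\ref{cor: 3.7} and $FT(N)\cong N^{\oplus n}$, both $F$ and $T$ carry flat modules to flat modules. First I would record these facts.

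\textbf{From $S$ to $R$.} Suppose $M$ is Gorenstein flat over $S$, witnessed by an exact complex ${\bf F}$ of flat $S$-modules with $M\cong\im(F_0\to F^0)$ and $E'\otimes_S{\bf F}$ exact for every injective right $S$-module $E'$. Apply $F$. The complex $F({\bf F})$ is exact (restriction preserves exactness) and its terms are flat over $R$ by the remark above, and $F(M)\cong\im(F(F_0)\to F(F^0))$. For any injective right $R$-module $E$ the isomorphism of the first paragraph gives $E\otimes_R F({\bf F})\cong T_r(E)\otimes_S{\bf F}$; since $T_r(E)$ is an injective right $S$-module, the right-hand side is exact. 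Hence $F({\bf F})$ is a complete flat resolution of $M$ over $R$, and $M$ is Gorenstein flat over $R$.

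\textbf{From $R$ to $S$.} Let ${\bf G}$ be a complete flat resolution of $M$ over $R$. Applying the exact functor $T$ yields an exact complex $T({\bf G})$ of flat $S$-modules with $T(M)\cong\im(T(G_0)\to T(G^0))$, and for every injective right $S$-module $T_r(E)$ one computes $T_r(E)\otimes_S T({\bf G})\cong E\otimes_R F(T({\bf G}))\cong(E\otimes_R{\bf G})^{\oplus n}$, which is exact. Thus $T(M)$ is Gorenstein flat over $S$. It then remains to descend from $T(M)$ to $M$, and for this I would invoke the short exact sequence of $S$-modules $0\to M\to T(M)\to M^{\oplus n-1}\to 0$ supplied by Lemma~\ref{lem: 3.5}. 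Applying the standard Gorenstein flat dimension inequality to this sequence gives $\Gfd_S M\le\max\{\Gfd_S T(M),\ \Gfd_S M^{\oplus n-1}-1\}$, and since $\Gfd_S T(M)=0$ while $\Gfd_S M^{\oplus n-1}=\Gfd_S M$, this reads $\Gfd_S M\le\max\{0,\Gfd_S M-1\}$, forcing $\Gfd_S M=0$ as soon as it is finite.

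\textbf{The obstacle.} The delicate point is exactly this last descent. A single application of either sequence of Lemma~\ref{lem: 3.5} is circular on its own, because $M^{\oplus n-1}$ is Gorenstein flat over $S$ if and only if $M$ is; the inequality above breaks the circularity, but only modulo ruling out the value $\Gfd_S M=\infty$. Establishing finiteness over an \emph{arbitrary} ring $R$ is where the care is needed, since the dimension inequalities and the closure of the class of Gorenstein flat modules under summands and cosyzygies are most transparent over coherent rings. I would either verify finiteness directly---by splicing the monomorphism $M\hookrightarrow T(M)$ of Lemma~\ref{lem: 3.5} with $T$ applied to a flat $R$-coresolution of $M$ to build a genuine complete flat resolution of $M$ over $S$ and checking total acyclicity via the reduction $T_r(E)\otimes_S(-)\cong E\otimes_R(-)$---or isolate the requisite closure properties of Gorenstein flat $S$-modules as a separate lemma. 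Controlling the cokernels of these embeddings as $S$-modules is the step I expect to require the most work.
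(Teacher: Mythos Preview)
Your forward direction is correct and is essentially the paper's argument, phrased through the tensor identity $T_r(E)\otimes_S L\cong E\otimes_R F(L)$ rather than the Hom--tensor adjunction the paper uses; the content is the same.

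In the reverse direction your dimension-shifting attempt has a flaw beyond the finiteness issue you already flag. The cokernel in the sequence of Lemma~\ref{lem: 3.5} is \emph{not} $M^{\oplus(n-1)}$ as an $S$-module: its underlying $R$-module is $M^{\oplus(n-1)}$, but its $t$-action is the companion-type matrix $\epsilon_{X''}$ displayed there, not the diagonal $\epsilon_M^{\oplus(n-1)}$. So the equality $\Gfd_S(\text{cokernel})=\Gfd_S M$ is unjustified, and the inequality $\Gfd_S M\le\max\{0,\Gfd_S M-1\}$ does not follow. Even for $n=2$, where the cokernel is $(M,-\epsilon_M)$, one needs the ring automorphism $t\mapsto -t$ of $S$ to see the dimensions agree; for larger $n$ no such argument is apparent.

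The paper instead carries out precisely the alternative you sketch at the end: it builds the complete flat $S$-resolution directly. For the left half it invokes the existence of flat covers together with a lifting lemma from \cite{XYY} to produce a flat $S$-resolution ${\bf S}\to M$ that stays exact under $I\otimes_S(-)$ for every injective right $S$-module $I$, giving $\Tor^S_{\ge 1}(I,M)=0$. For the right half it embeds $M\hookrightarrow T(F^0)$, where $F^0$ is the first term of a flat $R$-coresolution of $M$, and shows the cokernel $X$ is again Gorenstein flat over $R$, so the process iterates. The ``controlling the cokernel'' step you singled out as the crux is handled via two external inputs valid over arbitrary rings: the class of Gorenstein flat $R$-modules is closed under extensions by \cite{SS}, and then \cite[Theorem~2.8]{Be} upgrades finite Gorenstein flat dimension plus the established $\Tor$-vanishing to genuine Gorenstein flatness. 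So the paper's proof is not self-contained either; it leans on these recent structural results to avoid any coherence hypothesis on $R$.
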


\begin{proof} Assume that $M$ is Gorenstein flat in $\Mod(R[t]/(t^n))$. Then there exists an exact sequence
\begin{equation}\label{diag: (3.5)}
\begin{gathered}
{\bf F}:\cdots \to F_1\to F_0\to F^0\to F^1\to \cdots
\end{gathered}
\end{equation}
in $\Mod(R[t]/(t^n))$ with all $F_i,F^i$ flat such that $M\cong \im (F_0\to F^0)$ and $I\otimes_{R[t]/(t^n)}{\bf F}$
is exact for any injective right $R[t]/(t^n)$-module $I$. Indeed, (3.5) is an exact sequence of flat left $R$-modules
by Corollary ~\ref{cor: 3.7}. Let $E$ be an injective right $R$-module. Then $T(E)$ is injective right $R[t]/(t^n)$-module
by Proposition ~\ref{prop: 3.6}(2). So $T(E)\otimes_{R[t]/(t^n)}{\bf F}$ and $(T(E)\otimes_{R[t]/(t^n)}{\bf F})^+$
are exact. By the adjoint isomorphism theorem, $\Hom_{R[t]/(t^n)}({\bf F}, T(E)^+)$, and hence $\Hom_{R[t]/(t^n)}({\bf F}, T(E^+))$,
is exact. It follows from \cite[Proposition 3.3]{XYY} that $\Hom_R({\bf F}, E^+)$ is exact.
By the adjoint isomorphism theorem again, $(E\otimes_R{\bf F})^+$ and $E\otimes_R{\bf F}$ are exact.
Consequently, we conclude that $M$ is Gorenstein flat as a left $R$-module.

Conversely, assume that $M$ is Gorensten flat in $\Mod R$ and $E$ is any injective right $R$-module.
Then there exists an exact sequence
$${\bf F}:0 \to M\buildrel {f^0} \over \longrightarrow F^0\buildrel {f^1} \over \longrightarrow F^1\to \cdots $$
in $\Mod R$ with all $F^i$ is flat such that $\Tor_{\geqslant 1}^R(E,M)=0$ and $E\otimes_R{\bf F}$ is exact.
Since all modules have flat covers by \cite[Theorem 3]{BBE},
there exists an exact sequence
$${\bf F'}: \cdots \to F'_1\to F'_0\to M\to 0$$ in $\Mod R$ with all $F'_i$ flat such that
$\Hom_R(Q, {\bf F'})$ is exact for any flat left $R$-module $Q$.
Notice that $E\otimes_R{\bf F'}$, and hence $(E\otimes_R{\bf F'})^+$, is exact, so
$\Hom_R({\bf F'},E^+)$ is also exact by the adjoint isomorphism theorem. Then we deduce from \cite[Lemma 3.7(ii)]{XYY}
that there exists an exact sequence
$${\bf S}: \cdots \to S_1\to S_0\to M\to 0$$
in $\Mod(R[t]/(t^n))$ with all $S_i$ flat such that $\Hom_{R[t]/(t^n)}({\bf S}, T(E)^+)$ is exact. By Proposition \ref{prop: 3.6}(2),
$\Hom_{R[t]/(t^n)}({\bf S},I^+)$ is exact for any injective right $R[t]/(t^n)$-module $I$. By the adjoint isomorphism theorem,
we have that $(I\otimes_{R[t]/(t^n)}{\bf S})^+$, and hence $I\otimes_{R[t]/(t^n)}{\bf S}$, is also exact.
It yields $\Tor_{\geqslant 1}^{R[t]/(t^n)}(I,M)=0$ for any injective right $R[t]/(t^n)$-module $I$.

On the other hand, there exists an exact sequence
\begin{equation}\label{diag: (3.6)}
\begin{gathered}
$$\xymatrix{0 \ar[r] & M \ar[rr]^{ \left(
\begin{array}{c}
f_0{\epsilon_{M}}^{n-1} \\
\vdots \\
f_0{\epsilon_{M}} \\
f_0\\
\end{array}
\right)}&  & TF_0 \ar[r]  & X \ar[r] & 0}$$
\end{gathered}
\end{equation}
in $\Mod(R[t]/(t^n))$. Since $E\otimes_R{\bf F}$ is exact,
any morphism in $\Mod R$ from $M$ to $E^+$ can be extended to $F_0$. Also it is easy to verify that
any morphism in $\Mod R$ from $M$ to $E^+$ can be extended to $TF_0$. Hence we have $\Tor_{\geqslant 1}^R(E,X)=0$
and $\Hom_{R[t]/(t^n)}((3.6), T(E)^+)$ is exact by \cite[Lemma 3.2]{XYY}. Since $M$ is Gorenstein flat in $\Mod R$,
$X$ has finite Gorenstein flat dimension. Because the subcategory of $\Mod R$ consisting of Gorenstein flat modules
is closed under extensions by \cite[Theorem 3.11]{SS},
it follows from \cite[Theorem 2.8]{Be} that $X$ is Gorenstein flat in $\Mod R$. Repeating this process, we may construct an exact sequence
$${\bf F''}: 0\to X \to F_0''\to F_1''\to \cdots $$
in $\Mod(R[t]/(t^n))$ with all $F''_i$ flat such that $\Hom_{R[t]/(t^n)}({\bf F''}, T(E)^+)$ is exact for any injective right $R$-module $E$.
Consequently, $M$ is Gorenstein flat in $\Mod(R[t]/(t^n))$.
\end{proof}

If $R$ is a commutative Noetherian ring, it is derived from \cite[Corollary 2.17]{H1} that $\Gfd_{R[t]/(t^2)}M=\Gfd_{R}M$
for any $R$-module $M$. Now, we will generalize this result to a more general setting by applying Theorem ~\ref{thm: 3.10}.

\begin{corollary}\label{cor: 3.11}
Let $R$ be a ring. Then for any $M\in \Mod(R[t]/(t^n))$, we have
$$\Gfd_{R[t]/(t^n)}M=\Gfd_{R}M.$$
\end{corollary}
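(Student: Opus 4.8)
The plan is to establish the equality by proving the two inequalities $\Gfd_R M \le \Gfd_{R[t]/(t^n)} M$ and $\Gfd_{R[t]/(t^n)} M \le \Gfd_R M$ separately; each is automatic when the right-hand side is infinite, so I only need to treat the case in which the relevant dimension is finite, and in that case build an explicit Gorenstein flat resolution of the correct length over the other ring. The two engines throughout are Theorem~\ref{thm: 3.10}, which identifies the Gorenstein flat modules over the two rings, and Corollary~\ref{cor: 3.7}, which does the same for flat modules.

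For the inequality $\Gfd_R M \le \Gfd_{R[t]/(t^n)} M$ I would argue by restriction of scalars. Assuming $\Gfd_{R[t]/(t^n)} M = e < \infty$, I pick a Gorenstein flat resolution
$$0 \to G_e \to G_{e-1} \to \cdots \to G_0 \to M \to 0$$
in $\Mod(R[t]/(t^n))$. The forgetful functor $F$ is exact, so the sequence stays exact in $\Mod R$, while Theorem~\ref{thm: 3.10} guarantees that each $G_i$ is Gorenstein flat as a left $R$-module. Hence this is a Gorenstein flat resolution of $M$ over $R$ of length $e$, giving $\Gfd_R M \le e$.

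For the reverse inequality $\Gfd_{R[t]/(t^n)} M \le \Gfd_R M$, assume $\Gfd_R M = d < \infty$; the case $d = 0$ is immediate from Theorem~\ref{thm: 3.10}, so suppose $d \geq 1$. First I would choose a flat resolution $\cdots \to P_1 \to P_0 \to M \to 0$ in $\Mod(R[t]/(t^n))$, which by Corollary~\ref{cor: 3.7} restricts to a flat resolution over $R$. Writing $K := \im(P_d \to P_{d-1})$ for the $d$-th syzygy, the standard syzygy characterization of Gorenstein flat dimension forces $K$ to be Gorenstein flat over $R$ (here one invokes that the Gorenstein flat left $R$-modules are closed under extensions and under the relevant kernels, as recorded in \cite[Theorem 3.11]{SS} and \cite[Theorem 2.8]{Be}). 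Since $K$ is itself an $R[t]/(t^n)$-module, Theorem~\ref{thm: 3.10} then promotes it to a Gorenstein flat module over $R[t]/(t^n)$, and splicing it back yields
$$0 \to K \to P_{d-1} \to \cdots \to P_0 \to M \to 0,$$
a Gorenstein flat resolution over $R[t]/(t^n)$ of length $d$ (the $P_i$ being flat, hence Gorenstein flat). Thus $\Gfd_{R[t]/(t^n)} M \le d$, and combining the two inequalities gives the claimed equality. The crux — and the only step that is not a formal consequence of Theorem~\ref{thm: 3.10} — is this syzygy characterization, asserting that the $d$-th syzygy of a flat resolution is Gorenstein flat once $d \ge \Gfd_R M$; this dimension-shifting fact is where the closure properties of the Gorenstein flat class genuinely enter, and it is the part I would expect to require the most care to cite or verify precisely.
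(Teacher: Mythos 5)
The paper states this corollary without proof, treating it as immediate from Theorem~\ref{thm: 3.10}, and your argument supplies exactly the standard two-inequality derivation one would intend: restriction of scalars (which is exact and, by Theorem~\ref{thm: 3.10}, preserves Gorenstein flatness) gives $\Gfd_R M\leqslant \Gfd_{R[t]/(t^n)}M$, while dimension shifting along a flat resolution over $R[t]/(t^n)$ (flat over $R$ by Corollary~\ref{cor: 3.7}) followed by Theorem~\ref{thm: 3.10} applied to the $d$-th syzygy gives the converse. The one ingredient you rightly flag---that over an arbitrary ring the $d$-th syzygy in a flat resolution is Gorenstein flat once $d\geqslant \Gfd_R M$---does hold, by the closure properties recorded in \cite[Theorem 3.11]{SS} together with \cite[Theorem 2.8]{Be}, the same sources the paper invokes in the proof of Theorem~\ref{thm: 3.10}, so your proof is correct and complete.
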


\subsection{Homological conjectures}

\begin{lemma}\label{lem: 3.12}
Let $R$ be a ring and $M\in \Mod R$. If $f: M\to E^0(M)$ is the injective envelope of $M$ in $\Mod R$,
then $T(f): T(M)\to T(E^0(M))$ is the injective envelope of $T(M)$ in $\Mod (R[t]/(t^n))$.
\end{lemma}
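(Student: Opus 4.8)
The plan is to verify directly the two defining properties of an injective envelope: that $T(E^0(M))$ is injective in $\Mod(R[t]/(t^n))$, and that $T(f)$ is an essential monomorphism. I would organize the argument so that the injectivity is dispatched by the general theory already in place, leaving the essentiality as the only real computation.

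First I would handle injectivity. Under the identification $(\Mod R)[\epsilon]^n\cong \Mod(R[t]/(t^n))$, a short exact sequence in the induced structure $\mathscr{E}_F$ is exactly one that is exact on underlying $R$-modules, and for $R[t]/(t^n)$-modules this coincides with exactness in the ordinary abelian sense; hence the injective objects of $(\Mod(R[t]/(t^n)),\mathscr{E}_F)$ are precisely the genuine injective $R[t]/(t^n)$-modules. Since $\Mod R$ is abelian, hence idempotent complete, Proposition \ref{prop: 3.6}(2) applies: as $E^0(M)$ is injective in $\Mod R$, its image $T(E^0(M))$ is injective in $\Mod(R[t]/(t^n))$. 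That $T(f)$ is a monomorphism is immediate, since $T(f)$ is just $f$ applied in each coordinate and $f$ is injective. Identifying $M$ with $f(M)\subseteq E:=E^0(M)$, the image of $T(f)$ is the $R[t]/(t^n)$-submodule $M^{\oplus n}\subseteq E^{\oplus n}=T(E)$.

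The heart of the proof is essentiality, i.e.\ showing $M^{\oplus n}$ is essential in $E^{\oplus n}$ over $R[t]/(t^n)$, and it suffices to check that every nonzero cyclic submodule meets $M^{\oplus n}$. Here I would exploit that $t$ acts through the nilpotent Jordan matrix $\epsilon_{E^{\oplus n}}$, so $t\cdot(x_1,\dots,x_n)^{T}=(0,x_1,\dots,x_{n-1})^{T}$. Given a nonzero $x=(x_1,\dots,x_n)^{T}$, let $j$ be the least index with $x_j\neq 0$; a short computation shows $t^{\,n-j}x=(0,\dots,0,x_j)^{T}$, which concentrates the surviving nonzero entry in the last coordinate. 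Because $f\colon M\to E$ is an essential extension in $\Mod R$, I can choose $r\in R$ with $0\neq rx_j\in M$, and then $r\cdot t^{\,n-j}x=(0,\dots,0,rx_j)^{T}$ is a nonzero element of $M^{\oplus n}$ lying in $R[t]/(t^n)\cdot x$. This gives the required nontrivial intersection, hence essentiality, and together with the injectivity above shows $T(f)$ is the injective envelope of $T(M)$.

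The one genuinely delicate point is the essentiality step, and specifically the idea of selecting the \emph{top} nonzero coordinate and using the shift $t^{\,n-j}$ to push it into the last slot: this is precisely what allows one to peel off a single coordinate and reduce to the essentiality of $f$ over $R$. Everything else is routine — injectivity is a direct appeal to Proposition \ref{prop: 3.6}(2), and the monomorphism property is coordinatewise injectivity.
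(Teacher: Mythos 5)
Your proof is correct, but it takes a genuinely different route from the paper's. You verify the definition of injective envelope directly: injectivity of $T(E^0(M))$ via Proposition \ref{prop: 3.6}(2) (as the paper also does), and then essentiality by an explicit element computation — picking the lowest nonzero coordinate $x_j$ of an element of $E^0(M)^{\oplus n}$, using the shift $t^{\,n-j}$ to isolate it in the last slot, and then invoking the element-wise characterization of essentiality of $M$ in $E^0(M)$ over $R$ to land in $T(M)$. The paper instead uses the endomorphism characterization of injective envelopes (a monomorphism $f$ into an injective is an envelope if and only if every endomorphism $g$ with $gf=f$ is an automorphism): it takes $g$ with $gT(f)=T(f)$, invokes the matrix computation from the proof of Proposition \ref{prop: 3.4} to see that $g$ is lower-triangular Toeplitz with diagonal entry $a_1$ satisfying $a_1f=f$, concludes $a_1$ is an automorphism, hence so is $g$. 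Your argument is more elementary and self-contained (though it is element-based, which is harmless here since the lemma is stated for $\Mod R$), and it makes the geometric content of essentiality transparent; the paper's argument is element-free in spirit, reuses the structure of $\End_{\mathcal{C}[\epsilon]^n}(TX)$ already established, and would adapt more readily to settings where one cannot argue with elements. Both are complete; your identification of the injectives of $(\Mod(R[t]/(t^n)),\mathscr{E}_F)$ with the ordinary injective $R[t]/(t^n)$-modules is also correct, since $\mathscr{E}_F$ is the canonical exact structure there.
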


\begin{proof}
Let $f: M\to E^0(M)$ be the injective envelope of $M$ in $\Mod R$. It follows from Proposition ~\ref{prop: 3.6} that
$T(E^0(M))$ is injective in $\Mod (R[t]/(t^n))$. Now let
$g\in \Hom_{\Mod R[\epsilon]^n}(T(E^0(M)),T(E^0(M))$ such that $gT(f)=T(f)$.
Since $g\epsilon_{T(E^0(M))}=\epsilon_{T(E^0(M))}g$, by the proof of Proposition ~\ref{prop: 3.4},
we may assume that $g$ has the following form
$$g=\left(
\begin{array}{ccccc}
a_1& 0& 0& \cdots &0 \\
a_2& a_1& 0& \cdots &0 \\
a_{3}& a_2& a_1& \cdots &0 \\
\vdots&\vdots &\vdots &\ddots&\vdots\\
a_{n}& a_{n-1}&a_{n-2}& \cdots &a_1\\
\end{array}
\right).$$
The equation $gT(f)=T(f)$ gives that $a_1f=f$. As $f$ is the injective envelope of $M$, $a_1$ is an isomorphism.
So $g$ is also an isomorphism. It implies that $T(f): T(M)\to T(E^0(M))$ is the injective envelope of $T(M)$
in $\Mod (R[t]/(t^n))$.
\end{proof}

In the rest of this subsection, $R$ is an Artinian algebra and
\begin{equation}\label{diag: (3.7)}
\begin{gathered}
0\to {_RR}\to E^0(R)\to E^1(R)\to \cdots \to E^i(R) \to \cdots
\end{gathered}
\end{equation} is a minimal injective resolution of $R$ in $\mod R$.
By Lemma ~\ref{lem: 3.12}, we immediately get a minimal injective resolution
\begin{equation}\label{diag: (3.8)}
\begin{gathered}
0\to T(R)\to T(E^0(R))\to T(E^1(R))\to \cdots \to T(E^i(R)) \to \cdots
\end{gathered}
\end{equation}
of $T(R)$ in $\mod (R[t]/(t^n))$. For a module $M\in\mod R$, we use $\pd_RM$ and $\id_RM$
to denote the projective and injective dimensions of $M$ respectively.
The following are some long-standing homological conjectures.
\begin{enumerate}
\item Finitistic Dimension Conjecture ({\bf FDC}) \cite{Ba1}: $\findim R:=\{\pd_RM\mid M\in \mod R$ with $\pd_RM<\infty\}<\infty$.
\item Strong Nakayama Conjecture ({\bf SNC}) \cite{CF}:  For any $0\neq M\in\mod R$, there exists $n\geqslant 0$ such that $\Ext^n_R(M,R)\neq 0$.
\item Generalied Nakayama Conjecture ({\bf GNC}) \cite{AR2}: Any indecomposable injective module in $\mod R$ occurs as a direct summand of some $E^i(R)$.
\item Auslander-Gorenstein Conjecture ({\bf AGC}) \cite{AR4}: If $R$ satisfies the Auslander condition (that is, $\pd_RE^i(R)\leqslant i$
for any $i\geqslant 0$), then $R$ is Gorenstein.
\item Nakayama Conjecture ({\bf NC}) \cite{N}: If $E^i(R)$ is projective for any $i\geqslant 0$, then $R$ is self-injective.
\item Gorenstein Symmetric Conjecture ({\bf GSC}) \cite{AR3}: $\id_{R}R<\infty$ if and only if $\id_{R^{op}}R<\infty$
(equivalently, $\id_{R}R=\id_{R^{op}}R$ by \cite[Lemma A]{Z}).
\end{enumerate}

Auslander and Reiten posed many conjectures, but they did not name the fourth conjecture above. For the sake of avoiding confusion and convenience,
we name it as Auslander-Gorenstein Conjecture. In general, we have the following implications:
$$\xymatrix{ {\bf FDC}  \ar@{=>}[r] \ar@{=>}[d] & {\bf SNC} \ar@{=>}[r] & {\bf GNC} \ar@{=>}[r]
& {\bf AGC} \ar@{=>}[r] & {\bf NC}  \\
{\bf GSC}. &  }$$

By \cite[Proposition 6.10]{AR3} and \cite[Theorem 3.4.3]{Y}, we have ${\bf FDC}\Rightarrow {\bf GSC}$ and
${\bf FDC}\Rightarrow {\bf SNC} \Rightarrow {\bf GNC}$ respectively. It is easy to see that {\bf NC} is a special case of {\bf AGC}.
Assume that $R$ satisfies the Auslander condition. Let $\{Q_1,\cdots,Q_s\}$ be a complete set of non-isomorphic indecomposable injective
modules in $\mod R$. If $R$ satisfies {\bf GNC}, then each $Q_i$ occurs as a direct summand of some $E^{t_i}(R)$. Set $m:=max\{t_1,\cdots,t_s\}$.
Then $\pd_RQ_i\leq m$ for any $1\leqslant i \leqslant s$. Thus the projective dimension of any injective module in $\mod R$ is at most $m$.
It follows that the injective dimension of any projective module in $\mod R^{op}$ is also at most $m$, in particular, $\id_{R^{op}}R\leqslant m$.
So $R$ is $m$-Gorenstein by \cite[Corollary 5.5(b)]{AR4}. This proves ${\bf GNC}\Rightarrow {\bf AGC}$.

\begin{theorem}\label{thm: 3.13}
\begin{enumerate}
\item[]
\item $R$ satisfies {\bf FDC} if and only if $R[t]/(t^n)$ satisfies {\bf FDC}.
\item $R$ satisfies {\bf SNC} if and only if $R[t]/(t^n)$ satisfies {\bf SNC}.
\item $R$ satisfies {\bf GNC} if and only if $R[t]/(t^n)$ satisfies {\bf GNC}.
\item $R$ satisfies {\bf AGC} if and only if $R[t]/(t^n)$ satisfies {\bf AGC}.
\item $R$ satisfies {\bf NC} if and only if $R[t]/(t^n)$ satisfies {\bf NC}.
\item $R$ satisfies {\bf GSC} if and only if $R[t]/(t^n)$ satisfies {\bf GSC}.
\end{enumerate}
\end{theorem}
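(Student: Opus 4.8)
The plan is to treat all six statements with one small toolkit and then isolate \textbf{FDC} as the only part needing real work. Throughout write $S=R[t]/(t^n)$, and recall $\Mod S\cong(\Mod R)[\epsilon]^n$ with ${}_SS\cong T(R)$. The central device is a pair of adjunction-derived $\Ext$-isomorphisms. Since $F$ is exact and preserves projectives (a projective of $\mathcal{C}[\epsilon]^n$ is $T(Q)$ and $FT(Q)=Q^{\oplus n}$) while $T$ preserves projectives (Proposition~\ref{prop: 3.6}(1)), applying $F$ to an $S$-projective resolution of $N$ gives an $R$-projective resolution of $FN$, so that
\[
\Ext^k_S(N,T(M))\cong\Ext^k_R(FN,M)\qquad(k\geqslant0)
\]
for all $N\in\Mod S$ and $M\in\Mod R$; dually, using $T\dashv F$ and that $F$ preserves injectives, $\Ext^k_S(T(M),N)\cong\Ext^k_R(M,FN)$. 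Specialising (and letting $FN$ range over all of $\Mod R$) yields $\pd_ST(M)=\pd_RM$ and $\id_SS=\id_ST(R)=\id_RR$, and likewise for right modules via $S^{\mathrm{op}}=R^{\mathrm{op}}[t]/(t^n)$. Finally, Lemma~\ref{lem: 3.12} gives the minimal injective resolution \eqref{diag: (3.8)}, so $E^i(S)\cong T(E^i(R))$, and Corollary~\ref{cor: 3.8}(2) already gives ``$R$ Gorenstein $\Leftrightarrow$ $S$ Gorenstein''.

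With these, five of the six are short. For \textbf{GSC}, the equalities $\id_SS=\id_RR$ and $\id_{S^{\mathrm{op}}}S=\id_{R^{\mathrm{op}}}R$ turn the assertion for $S$ into that for $R$ verbatim. For \textbf{NC}, $T(E^i(R))$ is projective iff $E^i(R)$ is (Proposition~\ref{prop: 3.6}(1), together with $F$ reflecting projectivity through $FT(E^i(R))=E^i(R)^{\oplus n}$), and ``$S$ self-injective'' $\Leftrightarrow\id_SS=0\Leftrightarrow\id_RR=0\Leftrightarrow$ ``$R$ self-injective''. For \textbf{AGC}, the Auslander condition transfers since $\pd_SE^i(S)=\pd_ST(E^i(R))=\pd_RE^i(R)$, and the Gorenstein conclusion is Corollary~\ref{cor: 3.8}(2). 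For \textbf{SNC}, the first isomorphism with $M=R$ reads $\Ext^k_S(N,S)\cong\Ext^k_R(FN,R)$; if $R$ satisfies \textbf{SNC} then $FN\neq0$ forces some $\Ext^k_R(FN,R)\neq0$, and conversely applying \textbf{SNC} for $S$ to $T(M)$ gives $\Ext^k_R(M,R)^{\oplus n}\cong\Ext^k_S(T(M),S)\neq0$. For \textbf{GNC} I first check that $T$ gives a bijection between the indecomposable injectives of $\mod R$ and of $\mod S$: by the normal form in Proposition~\ref{prop: 3.4}, $\End_S(T(E))\cong\End_R(E)[s]/(s^n)$, whose radical is $\operatorname{rad}\End_R(E)+(s)$ with division-ring quotient, so this is local whenever $\End_R(E)$ is; thus $T$ takes indecomposable injectives to indecomposable injectives, and $FT(E)=E^{\oplus n}$ with Krull--Schmidt makes it injective and essentially surjective on them. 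Since $E^i(S)=T(E^i(R))=\bigoplus_jT(E_j)$ when $E^i(R)=\bigoplus_jE_j$, an indecomposable injective $T(E)$ is a summand of some $E^i(S)$ iff $E$ is a summand of some $E^i(R)$, so \textbf{GNC} transfers.

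The real content is \textbf{FDC}. One inequality is immediate: for $M\in\mod R$ with $\pd_RM<\infty$ we have $\pd_ST(M)=\pd_RM$, whence $\findim R\leqslant\findim S$. For the reverse I would prove the key lemma that \emph{every $N\in\mod S$ with $\pd_SN<\infty$ satisfies $\pd_SN=\pd_RFN$}, which gives $\findim S\leqslant\findim R$ and settles both directions of (1). The argument is induction on $d:=\pd_RFN$ (one always has $d\leqslant\pd_SN$): taking an $S$-projective cover and applying the exact $F$ shows that a syzygy has $R$-projective dimension dropping by one, and $\pd_SN\leqslant\pd_S\Omega_SN+1$ closes the step. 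Everything thus reduces to the base case $d=0$, namely the crux: \emph{if $FN$ is $R$-projective and $\pd_SN<\infty$, then $N$ is $S$-projective.}

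I expect this base case to be the main obstacle, because finiteness of $\pd_S$ is indispensable here — over $k[t]/(t^n)$ the simple module has projective underlying module yet infinite projective dimension — so no naive bound can work, and the first $\Ext$-isomorphism must do the decisive work. Suppose $FN$ is $R$-projective and $\pd_SN=e\geqslant1$. Applying $F$ to each cover sequence, an $R$-projective underlying module forces each syzygy's underlying module to split off a projective, so every $F\Omega^{j}_SN$ is $R$-projective; moreover the top syzygy $\Omega^e_SN$ is $S$-projective, hence of the form $T(Q)$ by Proposition~\ref{prop: 3.6}(1). The defining extension $0\to\Omega^e_SN\to P_{e-1}\to\Omega^{e-1}_SN\to0$ then lies in
\[
\Ext^1_S(\Omega^{e-1}_SN,T(Q))\cong\Ext^1_R(F\Omega^{e-1}_SN,Q)=0,
\]
the last equality holding because $F\Omega^{e-1}_SN$ is $R$-projective. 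Hence the extension splits, $\Omega^{e-1}_SN$ is projective, contradicting $\pd_SN=e$; so $e=0$. This proves the crux, and with it \textbf{FDC}.
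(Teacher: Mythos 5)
Your proposal is correct, and at the level of overall strategy it matches the paper: everything is funneled through the functors $F$ and $T$, Proposition 3.6, Lemma 3.12 (giving $E^i(R[t]/(t^n))\cong T(E^i(R))$), Corollary 3.8(2), and the transfer of $\Ext$-groups, which the paper outsources to \cite[Theorem 3.9]{XYY} and you re-derive directly from the two adjunctions of Proposition 3.1. Parts (2)--(6) are essentially the paper's arguments, with two additions on your side: you verify via the endomorphism-ring normal form from Proposition 3.4 that $T$ preserves indecomposability of injectives (the paper asserts this without comment in (3)), and you use the one-sided equalities $\id_SS=\id_RR$ and $\id_{S^{\op}}S=\id_{R^{\op}}R$ for (5) and (6), where the paper invokes the two-sided Corollary 3.8(2). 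The genuine divergence is in (1). The paper's proof of $\findim R[t]/(t^n)\leqslant\findim R$ takes a finite $S$-projective resolution of $A$ of length $m=\pd_SA$, observes that it restricts to an $R$-projective resolution of $A$, and concludes $m\leqslant\findim R$; as written this only yields $\pd_RA\leqslant m$, and the inequality actually needed, $\pd_SA\leqslant\pd_RA$, is exactly your key lemma ($\pd_SN=\pd_RFN$ whenever $\pd_SN<\infty$). Your proof of that lemma --- reduce by induction on $\pd_RFN$ to the case where $FN$ is $R$-projective, note that then every restricted syzygy is $R$-projective, identify the top syzygy as $T(Q)$ via Proposition 3.6(1), and kill the extension class in $\Ext^1_S(\Omega^{e-1}_SN,T(Q))\cong\Ext^1_R(F\Omega^{e-1}_SN,Q)=0$ --- is correct, and your observation that finiteness of $\pd_S$ is indispensable (witnessed by the simple module over $k[t]/(t^n)$) is exactly the right sanity check showing no cruder bound could work. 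In short, your write-up supplies, as an explicit lemma with proof, a step that the paper's proof of (1) leaves implicit.
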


\begin{proof}
Note that $R[t]/(t^n)$ as a left $R[t]/(t^n)$-module is isomorphic to the projective object $T(R)$ in $\Mod (R[t]/(t^n))$.
By Corollary \ref{cor: 3.8}(5), we have that $R$ is an Artinian algebra if and only if so is $R[t]/(t^n)$.
In addition, we always treat a left $R[t]/(t^n)$-module $A$ as an $n$-th differential object $(A, \epsilon_A)$.

(1) Suppose that $R$ satisfies {\bf FDC} and $\findim R=n(<\infty)$. Let $A\in\mod(R[t]/(t^n))$ with
$\pd_{R[t]/(t^n)}A<\infty$. By Proposition ~\ref{prop: 3.6}, we have the following projective resolution
$$0\to T(P_m)\to \cdots \to T(P_1)\to T(P_0)\to (A, \epsilon_A)\to 0$$
of $(A, \epsilon_A)$ in $\mod(R[t]/(t^n))$ such that each $P_i$ is projective left $R$-module.
Indeed, the above resolution is also a projective resolution of $A$ as a left $R$-module. Thus we have
$m\leqslant n$ and $\findim R[t]/(t^n)\leqslant n$.

Conversely, suppose that $R[t]/(t^n)$ satisfies {\bf FDC} and $\findim R[t]/(t^n)=n(<\infty)$. Let $A\in\mod R$ with $\pd_{R}A<\infty$.
Thus there exists a projective resolution
$$0\to P_m\to \cdots \to P_1\to P_0\to A\to 0$$
of $A$ in $\mod R$. Applying the exact functor $T$ to it yields a projective resolution
$$0\to T(P_m)\to \cdots \to T(P_1)\to T(P_0)\to T(A)\to 0$$
of $T(A)$ in $\mod(R[t]/(t^n))$. Thus we have $m\leqslant n$ and $\findim R\leqslant n$.

(2) Suppose that $R$ satisfies {\bf SNC}. Let $0\neq A\in \mod (R[t]/(t^n))$. Then there exists $n\geqslant 0$
such that $\Ext^n_R(A,R)\neq 0$. If $n\geqslant 1$, then by \cite[Theorem 3.9]{XYY}, we have $\Ext^n_{R[t]/(t^n)}(A,R[t]/(t^n))\neq 0$.
If $n=0$, then there exists $0\neq f\in \Hom_R(A,R)$. Thus
$$0\neq\left(
\begin{array}{c}
f\epsilon_A^{n-1} \\
\vdots \\
f\epsilon_A \\
f\\
\end{array}
\right) \in \Hom_{\Mod R[\epsilon]^n}(A,TR)$$
and $\Hom_{R[t]/(t^n)}(A, R[t]/(t^n))\neq 0$.

Conversely, suppose that $R[t]/(t^n)$ satisfies {\bf SNC}. Let $0\neq A \in \mod R$. Then $0\neq T(A) \in \mod (R[t]/(t^n))$ and
there exists $n\geqslant 0$ such that $\Ext^n_{R[t]/(t^n)}(T(A),R[t]/(t^n))$ $\neq 0$. If $n\geqslant 1$, then by \cite[Theorem 3.9]{XYY},
we have $\Ext^n_R(A,R)\neq 0$. For the case $n=0$, it is trivial that $\Hom_R(A,R)\neq 0$.

(3) Suppose that $R$ satisfies {\bf GNC}. Let $I$ be an indecomposable injective left $R[t]/(t^n)$-module.
Then $I\cong T(E)$ for some indecomposable injective left $R$-module $E$ by Proposition ~\ref{prop: 3.6}.
Since $E$ is isomorphic to a direct summand of some $E^i(R)$ by assumption, we have that $T(E)$ is isomorphic to
a direct summand of $T(E_i(R))$.

Conversely, suppose that $R[t]/(t^n)$ satisfies {\bf GNC}. Let $E$ be an indecomposable injective left $R$-module.
Then $T(E)$ is an indecomposable injective left $R[t]/(t^n)$-module. Since $T(E)$ is isomorphic to a direct summand
of some $T(E^i(R))$ by assumption, we have that $E$ is isomorphic to a direct summand of $E^i(R)$.

(4) Suppose that $R$ satisfies {\bf AGC}. If $\pd_{R[t]/(t^n)}T(E^i(R))\leqslant i$ for any $i\geqslant 0$,
it follows from Proposition ~\ref{prop: 3.6} that $\pd_{R}T(E^i(R))\leqslant i$ for any $i\geqslant 0$.
Hence $\pd_{R}E^i(R)\leqslant i$ for any $i\geqslant 0$. Since $R$ is Gorenstein by assumption,
we have that $R[t]/(t^n)$ is Gorenstein as well by Corollary ~\ref{cor: 3.8}(2).

Conversely, suppose that $R[t]/(t^n)$ satisfies {\bf AGC}. If $\pd_{R}E^i(R)\leqslant i$ for any $i\geqslant 0$,
then $\pd_{R[t]/(t^n)}T(E^i(R))\leqslant i$ for any $i\geqslant 0$. Since $R[t]/(t^n)$ is Gorenstein by assumption,
we have that $R$ is Gorenstein by Corollary ~\ref{cor: 3.8}(2) again.

(5) Suppose that $R$ satisfies {\bf NC}. If $T(E^i(R))$ is a projective left $R[t]/(t^n)$-module for any $i\geqslant 0$,
then in light of Proposition ~\ref{prop: 3.6}(1), we have that $E^i(R)$ is a projective left $R$-module for any $i\geqslant 0$.
By assumption, $R$ is self-injective. Then $R[t]/(t^n)$ is also self-injective by Corollary ~\ref{cor: 3.8}(2).

Conversely, suppose that $R[t]/(t^n)$ satisfies {\bf NC}. If $E^i(R)$ is a projective left $R$-module for any $i\geqslant 0$,
then $T(E^i(R))$ is a projective left $R[t]/(t^n)$-module for any $i\geqslant 0$. By assumption, $R[t]/(t^n)$ is self-injective.
Then $R$ is also self-injective by Corollary ~\ref{cor: 3.8}(2) again.

(6) It follows directly from Corollary ~\ref{cor: 3.8}(2).
\end{proof}

The results from Corollary \ref{cor: 3.8} to Theorem \ref{thm: 3.13} show that $R$ and $R[t]/(t^n)$ have many homological properties in common.
However, it is not always true. For example, let $K$ be a field. Then the global dimension of $K$ is zero, but $K[t]/(t^n)$ (where $n\geqslant 2$)
is a self-injective Nakayama algebra with infinite global dimension (\cite[Chapter V]{ASS}).

\section{\bf Triangulated categories}

In this section, we will introduce the homotopy category of $\mathcal{C}[\epsilon]^n$ and study how this
homotopy category is equivalent to the stable category of a Frobenius category.

\begin{lemma}\label{lem: 4.1}
Let $(\mathcal{C},\mathscr{E})$ be an idempotent complete exact category. Then $(\mathcal{C},\mathscr{E})$
is a Frobenius category if and only if $(\mathcal{C}[\epsilon]^n,\mathscr{E}_F)$ is a Frobenius category.
\end{lemma}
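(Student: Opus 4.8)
The plan is to prove both implications using the previously established machinery, in particular the explicit description of projectives and injectives in $(\mathcal{C}[\epsilon]^n,\mathscr{E}_F)$ given in Proposition~\ref{prop: 3.6}, together with the fact (Lemmas~\ref{lem: 3.2} and~\ref{lem: 3.3}) that $(\mathcal{C}[\epsilon]^n,\mathscr{E}_F)$ is an idempotent complete exact category whenever $(\mathcal{C},\mathscr{E})$ is. Recall that a Frobenius category is characterized by three conditions: it has enough projectives, it has enough injectives, and the class of projectives coincides with the class of injectives. So the strategy is to verify that each of these three conditions transfers back and forth between $\mathcal{C}$ and $\mathcal{C}[\epsilon]^n$.

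First I would treat the forward direction. Assume $(\mathcal{C},\mathscr{E})$ is Frobenius. By Proposition~\ref{prop: 3.6}, an object $P$ is projective in $\mathcal{C}[\epsilon]^n$ iff $P\cong T(Q)$ with $Q$ projective in $\mathcal{C}$, and $I$ is injective iff $I\cong T(E)$ with $E$ injective in $\mathcal{C}$. Since the projectives and injectives of $\mathcal{C}$ coincide by hypothesis, the objects $T(Q)$ and $T(E)$ range over the same class, so the projectives and injectives of $\mathcal{C}[\epsilon]^n$ coincide as well. For ``enough projectives'': given any $(X,\epsilon_X)$, choose an admissible epic $Q\to X$ in $\mathcal{C}$ with $Q$ projective; applying the exact functor $T$ (Lemma~\ref{lem: 3.2}(3)) and composing with the admissible epic $p'_X\colon TX\to X$ of Lemma~\ref{lem: 3.5}, one produces an admissible epic from a projective object of $\mathcal{C}[\epsilon]^n$ onto $(X,\epsilon_X)$. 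Dually, using the admissible monic $i''_X\colon X\to TX$ of Lemma~\ref{lem: 3.5} together with an injective embedding $X\to E$ in $\mathcal{C}$, one gets enough injectives. Thus $(\mathcal{C}[\epsilon]^n,\mathscr{E}_F)$ is Frobenius.

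For the converse, assume $(\mathcal{C}[\epsilon]^n,\mathscr{E}_F)$ is Frobenius and deduce the same for $\mathcal{C}$. The coincidence of projectives and injectives in $\mathcal{C}$ should follow by inverting the correspondence $Q\leftrightarrow T(Q)$, $E\leftrightarrow T(E)$: if every $T(Q)$ with $Q$ projective is injective in $\mathcal{C}[\epsilon]^n$, then by Proposition~\ref{prop: 3.6}(2) it has the form $T(E)$ with $E$ injective, and one extracts from the adjunction isomorphisms of Proposition~\ref{prop: 3.1} (applied to $F$ and $T$) that $Q$ is itself injective in $\mathcal{C}$, and symmetrically for the other containment. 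Enough projectives and injectives in $\mathcal{C}$ should transfer by applying $F$ to the corresponding admissible epics/monics in $\mathcal{C}[\epsilon]^n$, using that $F$ is exact (Lemma~\ref{lem: 3.2}(2)) and reflects the relevant data, together with Proposition~\ref{prop: 3.6} to identify $F$ of a projective (resp.\ injective) object as a direct summand of a projective (resp.\ injective) object of $\mathcal{C}$.

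The step I expect to be the main obstacle is the coincidence of projectives and injectives on the $\mathcal{C}$ side in the converse direction. The subtlety is that Proposition~\ref{prop: 3.6} characterizes the projectives and injectives of $\mathcal{C}[\epsilon]^n$ only up to the functor $T$, so translating an equality of these two classes of objects back into an equality of the corresponding classes in $\mathcal{C}$ requires knowing that $T$ is faithful on objects in the appropriate sense and that $T(Q)\cong T(E)$ forces a genuine relationship between $Q$ and $E$ in $\mathcal{C}$. I would handle this by combining the two adjunctions of Proposition~\ref{prop: 3.1}: the natural isomorphisms $\Hom_{\mathcal{C}}(Q,F(-))\cong\Hom_{\mathcal{C}[\epsilon]^n}(T(Q),-)$ and $\Hom_{\mathcal{C}}(F(-),E)\cong \Hom_{\mathcal{C}[\epsilon]^n}(-,T(E))$ let one transport exactness of a represented or corepresented functor across $T$, and then the splitting arguments of Proposition~\ref{prop: 3.4} and Lemma~\ref{lem: 3.5} recover the summand structure in $\mathcal{C}$ itself. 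The remaining verifications are routine diagram chases with the admissible epics $p'_X$ and admissible monics $i''_X$ of Lemma~\ref{lem: 3.5}.
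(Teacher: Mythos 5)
Your proposal is correct and follows essentially the same route as the paper: both directions are reduced to the three defining conditions of a Frobenius category, with the coincidence of projectives and injectives handled via Proposition~\ref{prop: 3.6} and ``enough projectives/injectives'' transferred using the exact functors $T$ and $F$ together with the admissible epic $p'_X$ and admissible monic $i''_X$ of Lemma~\ref{lem: 3.5}. The converse's coincidence step, which you flag as the main obstacle and resolve via the adjunctions of Proposition~\ref{prop: 3.1} and the summand correspondence of Proposition~\ref{prop: 3.4}, is exactly the detail the paper leaves implicit, so your elaboration is a sound filling-in rather than a departure.
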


\begin{proof}
Let $(\mathcal{C},\mathscr{E})$ be a Frobenius category and $(X,\epsilon_X)\in \ob\mathcal{C}[\epsilon]^n$.
By Lemmas \ref{lem: 3.2}(1) and ~\ref{lem: 3.5}, $(\mathcal{C}[\epsilon]^n,\mathscr{E}_F)$ is an exact category
and there exists an admissible epic $TX\buildrel {p_X'} \over \rightarrow X$ in $\mathcal{C}[\epsilon]^n$.
Since $(\mathcal{C},\mathscr{E})$ is a Frobenius category, there exists an admissible epic $P\buildrel {\pi} \over \rightarrow X$
in $\mathcal{C}$ with $P$ projective. Thus we get an admissible epic
$T(P)\buildrel {p_X'T(\pi)} \over \longrightarrow X$ in $\mathcal{C}[\epsilon]^n$. It implies that
$(\mathcal{C}[\epsilon]^n,\mathscr{E}_F)$ has enough projectives. Dually, $(\mathcal{C}[\epsilon]^n,\mathscr{E}_F)$ has enough injectives.
Finally, an application of Proposition ~\ref{prop: 3.6} gives that the projectives and injectives in $\mathcal{C}[\epsilon]^n$
coincide. So $(\mathcal{C}[\epsilon]^n,\mathscr{E}_F)$ is a Frobenius category.

Conversely, let $(\mathcal{C}[\epsilon]^n,\mathscr{E}_F)$ be a Frobenius category and $M\in \ob\mathcal{C}$.
Then there exists an admissible epic $P\buildrel {p} \over \rightarrow TM$ in $\mathcal{C}[\epsilon]^n$ with $P$ projective.
It follows from \cite[Lemma 2.7]{B} that there exists an admissible epic $TM\buildrel {\pi} \over \rightarrow M$ in $\mathcal{C}$.
Thus we get an admissible epic $P\buildrel {\pi p} \over \rightarrow M$ in $\mathcal{C}$. It implies that
$\mathcal{C}$ has enough projectives. Dually, $\mathcal{C}$ has enough injectives. With the aid of Proposition ~\ref{prop: 3.6},
we obtain that the projectives and injectives in $\mathcal{C}$ coincide. Therefore $\mathcal{C}$ is a Frobenius category.
\end{proof}

When $\mathcal{C}$ is a Frobenius category, Happel showed that the stable category $\underline{\mathcal{C}}$
becomes a triangulated category (\cite[Chapter I, Section 2]{Ha}). In the following, we always assume that $\mathcal{C}$ is
an exact category with trivial exact structure $\mathscr{E}^t$ (that is, the short exact sequences are split exact sequences) and the
induced exact structure via the forgetful functor $F$ in $\mathcal{C}[\epsilon]^n$ is denoted by $\mathscr{E}_F^t$,
that is, a sequence $$0\to A\to B\to C\to 0$$ belongs to $\mathscr{E}_F^t$ when it splits in $\mathcal{C}$.

\begin{proposition}\label{prop: 4.2}
Let $(\mathcal{C},\mathscr{E}^t)$ be an idempotent complete exact category. Then $(\mathcal{C}[\epsilon]^n,\mathscr{E}_F^t)$
is a Frobenius category and $\underline{\mathcal{C}[\epsilon]^n}$ is a triangulated category.
\end{proposition}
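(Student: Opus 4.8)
The plan is to deduce the whole statement from the machinery already in place, so that the only genuinely new verification concerns the trivial exact structure on $\mathcal{C}$ itself. Concretely, I would first check that $(\mathcal{C},\mathscr{E}^t)$ is a Frobenius category, then transport this property to $\mathcal{C}[\epsilon]^n$ via Lemma~\ref{lem: 4.1}, and finally invoke Happel's theorem to equip the stable category with a triangulated structure.

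For the first step, recall that in $\mathscr{E}^t$ every short exact sequence splits. Hence for each object $X\in\ob\mathcal{C}$ the functors $\Hom_{\mathcal{C}}(X,-)$ and $\Hom_{\mathcal{C}}(-,X)$ carry split short exact sequences to split (in particular exact) sequences of abelian groups, so that every object of $\mathcal{C}$ is simultaneously projective and injective. Enough projectives and enough injectives are then immediate, since for any $X$ the identity $1_X$ is at once an admissible monic and an admissible epic with $X$ projective-injective; and trivially the class of projectives coincides with the class of injectives (both equal all of $\mathcal{C}$). Thus $(\mathcal{C},\mathscr{E}^t)$ is a Frobenius category.

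With this in hand, I would apply Lemma~\ref{lem: 4.1} to the idempotent complete exact category $(\mathcal{C},\mathscr{E}^t)$, taking $\mathscr{E}=\mathscr{E}^t$ so that the induced structure $\mathscr{E}_F$ is exactly $\mathscr{E}_F^t$; the forward implication of that lemma then yields that $(\mathcal{C}[\epsilon]^n,\mathscr{E}_F^t)$ is a Frobenius category. Finally, by Happel's theorem (\cite[Chapter~I, Section~2]{Ha}) the stable category of any Frobenius category is triangulated, which gives that $\underline{\mathcal{C}[\epsilon]^n}$ is a triangulated category.

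I do not expect a real obstacle here beyond bookkeeping: all the substantive content, namely the complete description of projectives and injectives in $\mathcal{C}[\epsilon]^n$ and the transfer of the Frobenius property between $\mathcal{C}$ and $\mathcal{C}[\epsilon]^n$, has already been absorbed into Proposition~\ref{prop: 3.6} and Lemma~\ref{lem: 4.1}. The one point to keep honest is that the hypotheses of Lemma~\ref{lem: 4.1} are genuinely met, i.e.\ that $(\mathcal{C},\mathscr{E}^t)$ is idempotent complete (given by assumption) and Frobenius (verified above); once this is in place, the remainder of the argument is entirely formal.
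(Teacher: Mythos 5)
Your proposal is correct and follows essentially the same route as the paper: observe that under the trivial exact structure every object of $\mathcal{C}$ is projective and injective, so $(\mathcal{C},\mathscr{E}^t)$ is Frobenius, then transfer via Lemma~\ref{lem: 4.1} and conclude with Happel's theorem. Your write-up merely spells out the splitting argument for projectivity/injectivity in slightly more detail than the paper does.
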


\begin{proof}
By the definition of $\mathscr{E}_F^t$, every object in $\mathcal{C}$ is both projective and injective.
Thus $\mathcal{C}$ is a Frobenius category, and therefore $(\mathcal{C}[\epsilon]^n,\mathscr{E}_F^t)$ is
a Frobenius category by Lemma ~\ref{lem: 4.1}. Moreover we have that $\underline{\mathcal{C}[\epsilon]^n}$
is a triangulated category by \cite[Chapter I, Section 2]{Ha}.
\end{proof}

Recall that a {\it model structure} on a category $\mathcal{C}$
is three subcategories of $\mathcal{C}$ called {\it weak equivalences},
{\it cofibrations} and {\it fibrations} that must satisfy some axioms, see \cite[Definition 1.1.3]{Ho} for details.
Next we recall some notions from \cite{G}.
Given an exact category $(\mathcal{C},\mathscr{E})$,  by a {\it thick subcategory} of $\mathcal{C}$ we mean a class
of objects $\mathcal{W}$ which is closed under direct summands and such that if two out of three of the terms in a
short exact sequence are in $\mathcal{W}$, then so is the third. Suppose that $(\mathcal{C},\mathscr{E})$ has a model structure.
For an object $X\in \mathcal{C}$, we say that $X$ is {\it trivial} if $0\to X$ is a weak equivalence, $X$ is {\it cofibrant}
if $0\to X$ is a cofibration, and $X$ is {\it fibrant} if $X\to 0$ is a fibration. Moreover, we say $X$ is {\it trivially cofibrant}
if it is both trivial and cofibrant, and $X$ is {\it trivially fibrant} if it is both trivial and fibrant.

\begin{definition}\label{df: 4.3}(\cite{G})
Let $(\mathcal{C},\mathscr{E})$ be an exact category.
An {\it exact model structure} on $(\mathcal{C},\mathscr{E})$ is a model structure in which each of the following holds.
\begin{enumerate}
\item A map is a (trivial) cofibration if and only if it is an admissible monic with a (trivially) cofibrant cokernel.
\item A map is a (trivial) fibration if and only if it is an admissible epic with a (trivially) fibrant kernel.
\end{enumerate}
\end{definition}

The following corollary points out that the Frobenius category $(\mathcal{C}[\epsilon]^n,\mathscr{E}_F^t)$ has an exact model structure.

\begin{corollary}\label{cor: 4.4}
Let $(\mathcal{C},\mathscr{E}^t)$ be an idempotent complete exact category. Then there exists an exact model structure
on $(\mathcal{C}[\epsilon]^n,\mathscr{E}_F^t)$ given as follows.
\begin{enumerate}
\item A cofibration (resp. trivial cofibration) is an admissible monomorphism (resp. with a cokernel projective).
\item A fibration (resp. trivial fibration) is an admissible epimorphism (resp. with a kernel injective).
\item The weak equivalences are then the maps $g$ which factor as $g = pi$ where $i$ is a trivial
cofibration and $p$ is a trivial fibration.
\end{enumerate}
\end{corollary}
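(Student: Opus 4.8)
The plan is to obtain the model structure directly from the structural fact already proved in Proposition~\ref{prop: 4.2}, namely that $(\mathcal{C}[\epsilon]^n,\mathscr{E}_F^t)$ is a Frobenius category, and then to invoke Hovey's correspondence between exact model structures and compatible complete cotorsion pairs as developed for exact categories by Gillespie in \cite{G}. Recall that this correspondence produces an exact model structure (in the sense of Definition~\ref{df: 4.3}) from a \emph{Hovey triple} $(\mathcal{Q},\mathcal{W},\mathcal{R})$ of classes of objects, where $\mathcal{Q}$, $\mathcal{R}$ and $\mathcal{W}$ are the cofibrant, fibrant and trivial objects respectively, subject to the conditions that $\mathcal{W}$ is thick and that $(\mathcal{Q}\cap\mathcal{W},\mathcal{R})$ and $(\mathcal{Q},\mathcal{W}\cap\mathcal{R})$ are complete cotorsion pairs. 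Since $\mathcal{C}$ is idempotent complete by hypothesis, Lemma~\ref{lem: 3.3}(2) guarantees that $(\mathcal{C}[\epsilon]^n,\mathscr{E}_F^t)$ is idempotent complete, hence weakly idempotent complete, so that Gillespie's correspondence is applicable here.

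First I would specify the triple. Let $\mathcal{W}$ be the class of objects of $\mathcal{C}[\epsilon]^n$ that are projective (equivalently, by the Frobenius property, injective) in $(\mathcal{C}[\epsilon]^n,\mathscr{E}_F^t)$, and set $\mathcal{Q}=\mathcal{R}=\mathcal{C}[\epsilon]^n$, declaring every object to be both cofibrant and fibrant. With these choices the two cotorsion pairs to be checked are $(\mathcal{W},\mathcal{C}[\epsilon]^n)$ and $(\mathcal{C}[\epsilon]^n,\mathcal{W})$. For the orthogonality I would observe that the injective objects are exactly those $I$ with $\Ext^1(-,I)=0$ and the projective objects exactly those $P$ with $\Ext^1(P,-)=0$, so that $\mathcal{W}=\rightperp{(\mathcal{C}[\epsilon]^n)}=\leftperp{(\mathcal{C}[\epsilon]^n)}$, while $\leftperp{\mathcal{W}}=\rightperp{\mathcal{W}}=\mathcal{C}[\epsilon]^n$; completeness of both pairs is precisely the assertion that the Frobenius category has enough projectives and enough injectives, which is built into Proposition~\ref{prop: 4.2}. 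This assembles the Hovey triple and, via \cite{G}, yields the desired exact model structure.

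Finally I would read off the three descriptions by unwinding Definition~\ref{df: 4.3} against this triple. Since every object is cofibrant and fibrant, a cofibration is just an admissible monic and a fibration just an admissible epic; a trivial cofibration is an admissible monic whose cokernel is trivially cofibrant, and here \emph{trivially cofibrant} $=$ trivial $=$ projective, giving (1), while dually a trivial fibration is an admissible epic with injective kernel, giving (2); statement (3) is the standard factorization characterization of weak equivalences valid in any model category \cite[Definition 1.1.3]{Ho}. The main obstacle, and the only genuinely non-formal step, is verifying that $\mathcal{W}$ is thick: given a sequence in $\mathscr{E}_F^t$ with two of its three terms in $\mathcal{W}$, one uses that projectives and injectives coincide to see that the sequence splits (its relevant end term being projective or injective), whence the third term is a direct summand of an object of $\mathcal{W}$ and therefore again lies in $\mathcal{W}$; closure under direct summands is immediate. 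Once thickness and completeness are in hand, the corollary follows by direct translation through Hovey's correspondence.
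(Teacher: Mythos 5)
Your proposal is correct and follows essentially the same route as the paper: both pass through Proposition~\ref{prop: 4.2} to get the Frobenius structure, take $\mathcal{W}$ to be the projective ($=$ injective) objects, verify that $(\mathcal{W},\mathcal{C}[\epsilon]^n)$ and $(\mathcal{C}[\epsilon]^n,\mathcal{W})$ are complete cotorsion pairs with $\mathcal{W}$ thick, and invoke Gillespie's exact-category version of Hovey's correspondence from \cite{G}. The only difference is that you spell out the thickness and orthogonality checks that the paper dismisses as trivial.
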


\begin{proof}
First of all, it follows from Lemma ~\ref{lem: 3.3} that $(\mathcal{C}[\epsilon]^n,\mathscr{E}_F^t)$
is an idempotent complete exact category. Since $(\mathcal{C}[\epsilon]^n,\mathscr{E}_F^t)$ is a
Frobenius category by Proposition  ~\ref{prop: 4.2}, the class $\mathcal{W}$ of all projective (=injective)
objects forms a thick subcategory of $\mathcal{C}[\epsilon]^n$. It is trivial that
$(\mathcal{W},\mathcal{C}[\epsilon]^n)$ and $(\mathcal{C}[\epsilon]^n, \mathcal{W})$
are complete cotorsion pairs. Then the assertions hold by \cite[Theorem 3.3 and Corollary 3.4]{G}.
\end{proof}

Now we are able to give an explicit description of the translation functor $\Sigma$ in $\underline{\mathcal{C}[\epsilon]^n}$.

\begin{corollary}\label{cor: 4.5}
Let $(\mathcal{C},\mathscr{E}^t)$ be an idempotent complete exact category, and let $X=(X, \epsilon_X)$
and $Y=(Y,\epsilon_Y)$ be objects in $\underline{\mathcal{C}[\epsilon]^n}$. Then we have
\begin{enumerate}
\item $\Sigma X=(X'', \epsilon_{X''})$, where $X''=X^{\oplus n-1}$ and
$$\epsilon_{X''}=\left(\begin{array}{ccccc}
0& 1&  0 &\cdots & 0 \\
0& 0&  1 &\cdots & 0 \\
\vdots& \vdots& \vdots&\ddots  &\vdots\\
0& 0&  0 &\cdots & 1 \\
-{\epsilon_X}^{n-1}& -{\epsilon_X}^{n-2}& -{\epsilon_X}^{n-3}& \cdots & -{\epsilon_X} \\
\end{array}
\right)_{(n-1)\times (n-1)}.$$
\item If $\underline{f}: X\to Y$, then $\Sigma \underline{f}=\underline{g}$, where
$$g=\left(\begin{array}{cccc}
f& 0& \cdots & 0 \\
0& f& \cdots & 0 \\
\vdots& \vdots& \ddots &\vdots\\
0& 0&  \cdots & f \\
\end{array}
\right)_{(n-1)\times (n-1)}$$
and the standard triangle associated to $\underline{f}$ is $$X\buildrel {\underline{f}} \over \longrightarrow
Y \buildrel {\underline{u}} \over \longrightarrow \Cone(f) \buildrel {\underline{v}} \over \longrightarrow \Sigma X$$
with $$\Cone(f)=Y\oplus X^{\oplus n-1}, \epsilon_{\Cone(f)}= \left(
\begin{array}{ccccc}
\epsilon_Y& f& 0&\cdots &0 \\
0& 0& 1&\cdots &0 \\
\vdots&\vdots &\vdots &\ddots&\vdots\\
0& 0& 0&\cdots &1 \\
0& -{\epsilon_X}^{n-1}& -{\epsilon_X}^{n-2}& \cdots&-{\epsilon_X}\\
\end{array}
\right)_{n\times n},$$
$$u=\left(\begin{array}{c}
1 \\
0\\
\vdots\\
0\\
\end{array}
\right), \,\,v=\left(\begin{array}{ccccc}
0& 1& \cdots & 0 & 0\\
0& 0& \cdots & 0 & 0\\
\vdots& \vdots& \ddots &\vdots &\vdots\\
0& 0&  \cdots & 0 &1 \\
\end{array}
\right)_{(n-1)\times n}.
$$
\end{enumerate}
\end{corollary}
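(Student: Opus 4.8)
The plan is to obtain everything from Happel's construction of the triangulated structure on the stable category of a Frobenius category \cite[Chapter I, Section 2]{Ha}, which applies here because $(\mathcal{C}[\epsilon]^n,\mathscr{E}_F^t)$ is a Frobenius category by Proposition~\ref{prop: 4.2}. Recall that in that construction $\Sigma X$ is the cokernel of any admissible monic $X\to I$ with $I$ injective, and the standard triangle on a morphism $\underline{f}\colon X\to Y$ is produced by choosing such an embedding and forming its push-out along $f$. The key observation is that, for the trivial exact structure, every object of $\mathcal{C}$ is injective, so by Proposition~\ref{prop: 3.6} every object $TX$ is injective in $\mathcal{C}[\epsilon]^n$. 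Hence the short exact sequence \eqref{diag: (3.4)} of Lemma~\ref{lem: 3.5}, namely $X\xrightarrow{i_X''}TX\xrightarrow{p_X''}X''$, is precisely an injective embedding of $X$ with cokernel $X''$.

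Part (1) is then immediate: since $TX$ is injective and \eqref{diag: (3.4)} is a short exact sequence in $\mathcal{C}[\epsilon]^n$, we get $\Sigma X\cong X''=(X^{\oplus n-1},\epsilon_{X''})$, and the differential $\epsilon_{X''}$ displayed in Lemma~\ref{lem: 3.5} is exactly the matrix asserted in~(1).

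For part (2) I would form the push-out of $i_X''$ along $f$ in $\mathcal{C}[\epsilon]^n$, which exists by Lemma~\ref{lem: 3.2}(1) (axiom [E2]) and automatically yields the short exact sequence $Y\to\Cone(f)\to\Sigma X$. Concretely, $\Cone(f)$ is the cokernel of $\binom{-f}{i_X''}\colon X\to Y\oplus TX$; because the last coordinate of $i_X''$ is the identity, the push-out is carried by the object $Y\oplus X^{\oplus n-1}$, and the structural maps become the stated $u$ (the inclusion of $Y$) and $v$ (the map induced by $p_X''$, i.e.\ the projection onto $X^{\oplus n-1}$). The actual content is the explicit differential: one represents a class of $Y\oplus X^{\oplus n-1}$ inside $Y\oplus TX$ by the representative whose last $TX$-coordinate vanishes, applies $\mathrm{diag}(\epsilon_Y,\epsilon_{TX})$, and reduces back to that normal form using the push-out relation $(-f x,\,i_X''x)\sim 0$. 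Performing this bookkeeping reproduces exactly the matrix $\epsilon_{\Cone(f)}$ of the statement; in particular the entry $f$ in the top row comes from the push-out relation, and the bottom row of $-\epsilon_X$-powers is inherited from $\epsilon_{X''}$. Finally, $\Sigma\underline{f}=\underline{g}$ follows from functoriality of $\Sigma$: the square built from $i_X''$, $T(f)$ and $i_Y''$ commutes because $f\epsilon_X=\epsilon_Y f$ forces $T(f)\,i_X''=i_Y''\,f$, and the induced map on cokernels satisfies $p_Y''\,T(f)=g\,p_X''$ with $g=\mathrm{diag}(f,\dots,f)$, again by $f\epsilon_X=\epsilon_Y f$.

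The routine verifications — that $u,v$ are genuine $\mathcal{C}[\epsilon]^n$-morphisms, that $\epsilon_{\Cone(f)}^{\,n}=0$, and that $Y\to\Cone(f)\to\Sigma X$ lies in $\mathscr{E}_F^t$ — are automatic from taking the push-out inside the exact category $\mathcal{C}[\epsilon]^n$. I expect the only real obstacle to be the indexed matrix computation of the induced differential on the push-out: keeping the signs and the powers of $\epsilon_X$ correctly aligned under the chosen identification $\Cone(f)\cong Y\oplus X^{\oplus n-1}$ is where all the care is needed, and selecting the right normal-form representative (vanishing last $TX$-coordinate) is exactly what makes the differential come out in the displayed form rather than a superficially different but equivalent one.
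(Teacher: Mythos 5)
Your proposal follows the paper's own route: part (1) is read off from the short exact sequence \eqref{diag: (3.4)} of Lemma~\ref{lem: 3.5} together with the injectivity of $TX$ (Proposition~\ref{prop: 3.6}(2)), and part (2) is Happel's push-out of $i_X''$ along $f$, with $\Sigma\underline f$ obtained from the commuting ladder between the two sequences. The only substantive difference is how the push-out is identified with $Y\oplus X^{\oplus n-1}$. The paper does not pass through cokernel representatives at all: it simply exhibits the explicit morphism $h'\colon TX\to Y\oplus X^{\oplus n-1}$ with $h'i_X''=uf$ and verifies the universal property by writing down the unique comparison map $\gamma$ in closed form. Your normal-form computation (kill the last $TX$-coordinate using the relation $(-fx,\,i_X''x)\sim 0$) is also a valid way to realize the push-out, but be aware that with that particular choice the induced endomorphism sends $(y,x_1,\dots,x_{n-1})$ to $(\epsilon_Yy+fx_{n-1},\,-\epsilon_X^{n-1}x_{n-1},\,x_1-\epsilon_X^{n-2}x_{n-1},\dots,x_{n-2}-\epsilon_Xx_{n-1})$, which is conjugate to, but not literally, the displayed $\epsilon_{\Cone(f)}$; to land on the stated matrix one must postcompose with the order-reversing unipotent automorphism of $X^{\oplus n-1}$, which is exactly what the paper's $h'$ builds in. So your parenthetical remark that the choice of representative is "where all the care is needed" is right, but the specific representative you name does not by itself produce the displayed form — this is a presentational discrepancy, not a gap, since the two differentials are isomorphic as objects of $\mathcal{C}[\epsilon]^n$.
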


\begin{proof}
(1) In view of Proposition ~\ref{prop: 4.2}, $\underline{\mathcal{C}[\epsilon]^n}$ is a triangulated category.
We also know from Lemma ~\ref{lem: 3.5} that there exists a short exact sequence
$$X\buildrel {i''_X} \over \longrightarrow  TX \buildrel {p''_X} \over \longrightarrow X''$$
in $\mathcal{C}[\epsilon]^n$. Since $\mathcal{C}$ is an exact category with trivial exact structure,
every object in $\mathcal{C}$ is injective. By Proposition \ref{prop: 3.6}(2),  we have that
$TX$ injective. Then one easily has $\Sigma X=(X'', \epsilon_{X''})$ by \cite[Chapter I, Section 2.2]{Ha}.

(2) For $\underline{f}: X\to Y$, we have $f\epsilon_X=\epsilon_Y f$. Then there exists a commutative diagram
$$\xymatrix{
 X \ar[r]^{i_X''} \ar[d]^{f}&  TX \ar[d]^h \ar[r]^{p_X''} &  X'' \ar[d]^{g}  \\
Y \ar[r]^{i_Y''} &  TY  \ar[r]^{p_Y''} &  Y''}$$ in $\mathcal{C}[\epsilon]^n$ with
$$h=\left(\begin{array}{cccc}
f& 0& \cdots & 0 \\
0& f& \cdots & 0 \\
\vdots& \vdots& \ddots &\vdots\\
0& 0&  \cdots & f \\
\end{array}
\right)_{n\times n},
g=\left(\begin{array}{cccc}
f& 0& \cdots & 0 \\
0& f& \cdots & 0 \\
\vdots& \vdots& \ddots &\vdots\\
0& 0&  \cdots & f \\
\end{array}
\right)_{(n-1)\times (n-1)}.$$
So $\Sigma \underline{f}=\underline{g}$. By \cite[Chapter I, Section 2.5]{Ha}, the standard triangle is constructed by the following push-out diagram
$$\xymatrix{
 X \ar[r]^{i_X''} \ar[d]^{f}&  TX \ar[d]^{h'} \ar[r]^{p_X''} &  \Sigma X \ar@{=}[d]  \\
Y \ar[r]^{u} &  \Cone(f)  \ar[r]^{v} &   \Sigma X. }$$ By the proof of Lemma ~\ref{lem: 3.2},
it suffices to construct a push-out along with $f$ and $i_X''$ in $\mathcal{C}$. Take
$$h'=\left(\begin{array}{ccccc}
0 & 0& \cdots &0 & f \\
0 & 0& \cdots &1 & -\epsilon_X \\
\vdots& \vdots& \iddots& \vdots& \vdots\\
0 & 1& \cdots &0 & 0 \\
1 & -\epsilon_X & \cdots &0 & 0\\
\end{array}
\right)_{n\times n}:TX\to Y\oplus X^{\oplus n-1}(=\Cone(f))$$
in $\mathcal{C}[\epsilon]^n$. It is easy to see that $h'i_X''=uf$. Now let $M\in \ob\mathcal{C}$,
$\alpha=(\alpha_1,\alpha_2,\cdots, \alpha_n): TX\to M$ and $\beta:Y\to M$ such that $\beta f=\alpha i_X''$.
We have to show that there exists a unique morphism $\gamma=(\gamma_1,\gamma_2,\cdots, \gamma_n): \Cone(f)\to M$
such that $\gamma u=\beta$ and $\gamma h=\alpha$. Let
$$\gamma_1=\beta, \gamma_n=\alpha_1, \gamma_i=\alpha_{n-i+1}+\alpha_{n-i}\epsilon_X+\cdots+\alpha_1{\epsilon_X}^{n-i}$$
for $2\leqslant i\leqslant n-1$. It is the morphism, as desired.
\end{proof}

\begin{definition}\label{df: 4.6}
A morphism $f: (X,\epsilon_X)\to (Y,\epsilon_Y)$ in $\mathcal{C}[\epsilon]^n$ is called
{\it null-homotopic} if there exists a morphism $s:X\to Y$ in $\mathcal{C}$ such that
$$f={\epsilon_{Y}}^{n-1}s+{\epsilon_{Y}}^{n-2}s{\epsilon_{X}}+\cdots + s{\epsilon_{X}}^{n-1}.$$
For morphisms $f,g: X\to Y$ in $\mathcal{C}[\epsilon]^n$, we denote $f\thicksim g$ if $f-g$
is null-homotopic. We denote by $\K(\mathcal{C}[\epsilon]^n)$ the {\it homotopy category},
that is, the category consisting of $n$-th differential objects such that the morphism set
between $X,Y\in \K(\mathcal{C}[\epsilon]^n)$ is given by $\Hom_{\K(\mathcal{C}[\epsilon]^n)}(X,Y)
=\Hom_{\mathcal{C}[\epsilon]^n}(X,Y)/\thicksim$.
\end{definition}

We close this section with the following theorem.

\begin{theorem}\label{thm: 4.7}
Let $(\mathcal{C},\mathscr{E}^t)$ be an idempotent complete exact category. Then the stable category
$\underline{\mathcal{C}[\epsilon]^n}$ of the Frobenius category $(\mathcal{C}[\epsilon]^n,\mathscr{E}_F^t)$
is the homotopy category $\K(\mathcal{C}[\epsilon]^n)$.
\end{theorem}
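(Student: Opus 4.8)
The plan is to notice that $\underline{\mathcal{C}[\epsilon]^n}$ and $\K(\mathcal{C}[\epsilon]^n)$ are both additive quotients of $\mathcal{C}[\epsilon]^n$ with exactly the same objects and with composition induced from $\mathcal{C}[\epsilon]^n$; in each, the morphism group is obtained from $\Hom_{\mathcal{C}[\epsilon]^n}(X,Y)$ by killing a subgroup. For $\underline{\mathcal{C}[\epsilon]^n}$ this subgroup is $P(X,Y)$, the morphisms factoring through a projective object, and for $\K(\mathcal{C}[\epsilon]^n)$ it is the subgroup $N(X,Y)$ of null-homotopic morphisms. Hence it suffices to prove $P(X,Y)=N(X,Y)$ for all $X=(X,\epsilon_X)$ and $Y=(Y,\epsilon_Y)$. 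First I would invoke Proposition ~\ref{prop: 3.6}(1): since $\mathscr{E}^t$ is the trivial exact structure, every object of $\mathcal{C}$ is projective, so the projectives of $(\mathcal{C}[\epsilon]^n,\mathscr{E}_F^t)$ are precisely the objects isomorphic to $T(Q)$ with $Q\in\ob\mathcal{C}$ arbitrary. Consequently a morphism lies in $P(X,Y)$ if and only if it factors through some $T(Q)$.

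For the inclusion $P(X,Y)\subseteq N(X,Y)$, I would use the two adjunctions of Proposition ~\ref{prop: 3.1} to describe morphisms into and out of $T(Q)$ explicitly. By the isomorphism $\phi$ of the adjoint pair $(F,T)$, every morphism $g\colon X\to T(Q)$ has the form $g=\phi(a)=(a\epsilon_X^{n-1},\ldots,a\epsilon_X,a)^{\mathrm{t}}$ for a unique $a\in\Hom_{\mathcal{C}}(X,Q)$, so its $i$-th component is $a\epsilon_X^{n-i}$. By the isomorphism $\psi$ of the adjoint pair $(T,F)$, every morphism $h\colon T(Q)\to Y$ is determined by a unique $b\in\Hom_{\mathcal{C}}(Q,Y)$, the defining relation $h\epsilon_{Q^{\oplus n}}=\epsilon_Y h$ forcing its $i$-th component to equal $\epsilon_Y^{i-1}b$. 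Multiplying the row $h$ by the column $g$, the composite collapses to the single morphism $hg=\sum_{i=1}^{n}\epsilon_Y^{i-1}(ba)\epsilon_X^{n-i}$, which is exactly the null-homotopy expression of Definition ~\ref{df: 4.6} with $s:=ba\in\Hom_{\mathcal{C}}(X,Y)$. Thus every morphism factoring through a $T(Q)$ is null-homotopic.

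For the reverse inclusion and the conclusion, I would read this computation backwards: given a null-homotopic $f=\epsilon_Y^{n-1}s+\cdots+s\epsilon_X^{n-1}$ with $s\colon X\to Y$ in $\mathcal{C}$, take $Q=X$, $a=1_X$ and $b=s$; then $X\xrightarrow{\phi(1_X)}T(X)\xrightarrow{(s,\epsilon_Y s,\ldots,\epsilon_Y^{n-1}s)}Y$ composes to $f$, and $T(X)$ is projective, whence $f\in P(X,Y)$. Both displayed arrows are automatically morphisms in $\mathcal{C}[\epsilon]^n$, being images of $1_X$ and $s$ under the adjunction isomorphisms, and both $N(X,Y)$ and $P(X,Y)$ are genuine subgroups — the former by additivity of the homotopy formula in $s$, the latter because $T(Q_1)\oplus T(Q_2)\cong T(Q_1\oplus Q_2)$ is again projective. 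This gives $N(X,Y)=P(X,Y)$, so the two quotient categories coincide, and the triangulated structure on $\K(\mathcal{C}[\epsilon]^n)$ is then the Happel structure transported from Proposition ~\ref{prop: 4.2}.

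I expect the only real work to be the bookkeeping in the middle step: correctly extracting the component forms of $g$ and $h$ from the adjunctions $\phi,\psi$ and verifying that their matrix product telescopes to the single sum $\sum_{i}\epsilon_Y^{i-1}(ba)\epsilon_X^{n-i}$. Everything else is formal once the projectives have been identified as the objects $T(Q)$.
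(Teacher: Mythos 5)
Your proof is correct and follows essentially the same route as the paper: both arguments reduce the theorem to the identity of the two ideals (morphisms factoring through a projective versus null-homotopic morphisms) and both rest on the same telescoping computation $\sum_{i}\epsilon_Y^{i-1}(ba)\epsilon_X^{n-i}$ for composites through $T(Q)$. The only cosmetic differences are that you factor a null-homotopic $f$ through $T(X)$ where the paper uses $T(Y)$ and $p'_Y$, and that you treat maps through an arbitrary $T(Q)$ directly via the two adjunctions rather than first reducing to a factorization through $T(Y)$.
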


\begin{proof}
It suffices to show that a morphism $f: X \to Y$ in $\mathcal{C}[\epsilon]^n$ is null-homotopic if and only if
it factors through a projective object in $\mathcal{C}[\epsilon]^n$. Assume that $f : X \to Y$ is null-homotopic.
By definition, there exists a morphism $s:X\to Y$ in $\mathcal{C}$ such that
$$f={\epsilon_{Y}}^{n-1}s+{\epsilon_{Y}}^{n-2}s{\epsilon_{X}}+\cdots + s{\epsilon_{X}}^{n-1}.$$ Take
$$g=\left(
\begin{array}{c}
s{\epsilon_{X}}^{n-1} \\
\vdots \\
s\epsilon_{X} \\
s\\
\end{array}
\right): X\to TY.$$
Then $g$ is morphism in $\mathcal{C}[\epsilon]^n$ and $f=p_Y'g$. Thus $f$ factors through a projective object
since $TY$ is a projective object of $\mathcal{C}[\epsilon]^n$. Now suppose that $f$ factors through a projective object.
Then $f$ must factor through $TY$ and thus there exists a morphism
$$g=\left(
\begin{array}{c}
g_1 \\
g_2 \\
\vdots \\
g_n\\
\end{array}
\right): X\to TY$$ in $\mathcal{C}[\epsilon]^n$ such that
$$f=p_Y'g=g_1+\epsilon_Yg_2+\cdots +{\epsilon_{Y}}^{n-1}g_n.$$
Since $\epsilon_{Y^{\oplus n}}g=g\epsilon_X$, we have $g_1\epsilon_X=0$ and $g_{i+1}\epsilon_X=g_i$
for any $1\leqslant i \leqslant n-1$. Set $s:=g_n$. It follows that
$$f={\epsilon_{Y}}^{n-1}s+{\epsilon_{Y}}^{n-2}s{\epsilon_{X}}+\cdots + s{\epsilon_{X}}^{n-1}$$
and $f$ is null-homotopic.
\end{proof}

\section{\bf The derived category}
In this section, $\mathcal{A}$ is an abelian category. We will introduce the derived category of
$\mathcal{A}[\epsilon]^n$ as the Verdier quotient of the homotopy category $\K(\mathcal{A}[\epsilon]^n)$
with respect to quasi-isomorphisms.

A sequence
$$0\to X\to Y\to Z\to 0$$ in $\mathcal{A}[\epsilon]^n$ is exact if and only if
$$0\to FX\to FY\to FZ\to 0$$ is exact in $\mathcal{A}$. $\mathcal{A}[\epsilon]^n$ also forms an abelian category.
The next definition essentially generalizes the notion of homology used in \cite{S1}.

\begin{definition}\label{df: 5.1} We call $(X, \epsilon_X)\in \mathcal{A}[\epsilon]^n$ {\it acyclic} if
$$\H_{(r)}(X):= \Ker {\epsilon_X}^r/\im {\epsilon_X}^{n-r}=0$$ for any $1\leqslant r\leqslant n-1$.
\end{definition}

By the definition above, one easily see that any object $(X^{\oplus n}, \epsilon_{X^{\oplus n}})$ is acyclic.

\begin{proposition} \label{prop: 5.2}
Let $(X, \epsilon_X), (Y, \epsilon_Y) \in \mathcal{A}[\epsilon]^n$ and $f, g \in \Hom_{\mathcal{A}[\epsilon]^n}(X,Y)$.
If $f\thicksim g$, then $\H_{(r)}(f)=\H_{(r)}(g)$ for any $1\leqslant r\leqslant n-1$.
\end{proposition}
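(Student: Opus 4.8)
The plan is to first record that $\H_{(r)}$ is an additive functor on $\mathcal{A}[\epsilon]^n$, and then to reduce the statement to the vanishing of $\H_{(r)}$ on a single null-homotopic morphism. For any morphism $f\colon (X,\epsilon_X)\to(Y,\epsilon_Y)$ the defining relation $f\epsilon_X=\epsilon_Yf$ iterates to $f\epsilon_X^{\,r}=\epsilon_Y^{\,r}f$ for every $r$; hence $f$ carries $\Ker\epsilon_X^{\,r}$ into $\Ker\epsilon_Y^{\,r}$ and $\im\epsilon_X^{\,n-r}$ into $\im\epsilon_Y^{\,n-r}$, and therefore descends to a map $\H_{(r)}(f)\colon\H_{(r)}(X)\to\H_{(r)}(Y)$ (the inclusion $\im\epsilon_X^{\,n-r}\subseteq\Ker\epsilon_X^{\,r}$ making the quotient meaningful is just $\epsilon_X^{\,n}=0$). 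Since passing to these subquotients is additive in $f$, one has $\H_{(r)}(f)-\H_{(r)}(g)=\H_{(r)}(f-g)$, so it suffices to show that $\H_{(r)}(h)=0$ whenever $h:=f-g$ is null-homotopic.

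Write a null-homotopy for $h$ as $h=\sum_{i=0}^{n-1}\epsilon_Y^{\,n-1-i}s\,\epsilon_X^{\,i}$ for some $s\colon X\to Y$ in $\mathcal{A}$. To see $\H_{(r)}(h)=0$ I would check that $h$ sends $\Ker\epsilon_X^{\,r}$ into $\im\epsilon_Y^{\,n-r}$, working with elements, which is legitimate in an abelian category. Let $x\in\Ker\epsilon_X^{\,r}$. Then $\epsilon_X^{\,i}(x)=0$ for all $i\ge r$, so every summand with index $i\ge r$ vanishes and
$$h(x)=\sum_{i=0}^{r-1}\epsilon_Y^{\,n-1-i}s\,\epsilon_X^{\,i}(x).$$
For each surviving index $0\le i\le r-1$ one has $n-1-i\ge n-r$, so the factor $\epsilon_Y^{\,n-r}$ can be pulled out of every term:
$$h(x)=\epsilon_Y^{\,n-r}\Big(\sum_{i=0}^{r-1}\epsilon_Y^{\,r-1-i}s\,\epsilon_X^{\,i}(x)\Big)\in\im\epsilon_Y^{\,n-r}.$$
Thus the induced map $\H_{(r)}(h)$ is zero, whence $\H_{(r)}(f)=\H_{(r)}(g)$.

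The computation itself is short; the only thing to watch is the index bookkeeping, and the content of the argument is precisely that the two hypotheses dovetail. The condition $x\in\Ker\epsilon_X^{\,r}$ annihilates exactly the summands $\epsilon_Y^{\,n-1-i}s\,\epsilon_X^{\,i}$ with $i\ge r$, while for the complementary range $i\le r-1$ the exponent $n-1-i$ is at least $n-r$ — just enough to extract the factor $\epsilon_Y^{\,n-r}$ that defines the denominator of $\H_{(r)}(Y)$. Hence the main (and essentially only) obstacle is verifying that this matching of ranges is sharp uniformly in $r$, which the inequalities above confirm.
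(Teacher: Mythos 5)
Your proof is correct and follows essentially the same route as the paper's: both reduce to showing that the null-homotopic difference $f-g=\sum_{i}\epsilon_Y^{\,n-1-i}s\,\epsilon_X^{\,i}$ carries $\Ker\epsilon_X^{\,r}$ into $\im\epsilon_Y^{\,n-r}$, since the summands with $i\geqslant r$ die on $\Ker\epsilon_X^{\,r}$ and the remaining ones carry a factor $\epsilon_Y^{\,n-r}$. Your version is slightly more explicit about additivity of $\H_{(r)}$ and the index bounds, but the argument is the same.
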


\begin{proof}
If $f\thicksim g$, then there exists a morphism $s: X\to Y$ such that
$$f-g={\epsilon_{Y}}^{n-1}s+{\epsilon_{Y}}^{n-2}s{\epsilon_{X}}+\cdots + s{\epsilon_{X}}^{n-1}.$$
So for any $1\leqslant r\leqslant n-1$, we have
$$(f-g)(\Ker {\epsilon_X}^r)={\epsilon_{Y}}^{n-1}s(\Ker {\epsilon_X}^r)+
{\epsilon_{Y}}^{n-2}s{\epsilon_{X}}(\Ker ({\epsilon_X}^r))+\cdots$$
$$+{\epsilon_{Y}}^{n-r+1}s {\epsilon_X}^{r-1}(\Ker {\epsilon_X}^r).$$
Thus $(f-g)(\Ker {\epsilon_X}^r)\subseteq \im {\epsilon_{Y}}^{n-r}$, and therefore $\H_{(r)}(f)=\H_{(r)}(g)$.
\end{proof}

\begin{lemma}\label{lem: 5.3}
Let $$0\to X\buildrel {f} \over \longrightarrow Y \buildrel {g} \over \longrightarrow Z\to 0$$
be an exact sequence in $\mathcal{A}[\epsilon]^n$. Then we have the following exact sequence
$$\cdots \buildrel {\partial} \over \longrightarrow \H_{(r)}(X)\buildrel {} \over \longrightarrow
\H_{(r)}(Y)\buildrel {} \over \longrightarrow \H_{(r)}(Z)\buildrel {\partial} \over \longrightarrow
\H_{(n-r)}(X)\buildrel {} \over \longrightarrow \H_{(n-r)}(Y)\buildrel {} \over \longrightarrow \cdots.$$
\end{lemma}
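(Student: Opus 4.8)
The plan is to recognize the groups $\H_{(r)}(X)$ and $\H_{(n-r)}(X)$ as the homology of a single two-periodic complex attached to $\epsilon_X$, and then to invoke the classical long exact homology sequence. For the fixed $r$ with $1\leqslant r\leqslant n-1$, I would associate to $(X,\epsilon_X)$ the two-periodic complex in $\mathcal{A}$
$$C^\bullet(X,r):\quad \cdots \buildrel {\epsilon_X^{n-r}} \over \longrightarrow X \buildrel {\epsilon_X^{r}} \over \longrightarrow X \buildrel {\epsilon_X^{n-r}} \over \longrightarrow X \buildrel {\epsilon_X^{r}} \over \longrightarrow \cdots,$$
whose differentials alternate between $\epsilon_X^{r}$ and $\epsilon_X^{n-r}$. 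Since $\epsilon_X^{r}\epsilon_X^{n-r}=\epsilon_X^{n-r}\epsilon_X^{r}=\epsilon_X^{n}=0$, this is indeed a complex, and by construction its homology at a spot with outgoing differential $\epsilon_X^{r}$ (hence incoming $\epsilon_X^{n-r}$) is $\Ker\epsilon_X^{r}/\im\epsilon_X^{n-r}=\H_{(r)}(X)$, while at a spot with outgoing differential $\epsilon_X^{n-r}$ it is $\Ker\epsilon_X^{n-r}/\im\epsilon_X^{r}=\H_{(n-r)}(X)$. Thus the homology of $C^\bullet(X,r)$ runs periodically through the two values $\H_{(r)}(X),\H_{(n-r)}(X),\H_{(r)}(X),\dots$.

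Next I would observe that the given short exact sequence $0\to X\to Y\to Z\to 0$ in $\mathcal{A}[\epsilon]^n$ induces a short exact sequence of complexes $0\to C^\bullet(X,r)\to C^\bullet(Y,r)\to C^\bullet(Z,r)\to 0$ in $\mathcal{A}$. Indeed, $f$ and $g$ commute with $\epsilon$ and hence with every power $\epsilon^{r}$, $\epsilon^{n-r}$, so they are morphisms of complexes; and the sequence is exact in each degree because, as recalled just before Definition \ref{df: 5.1}, exactness of $0\to X\to Y\to Z\to 0$ in $\mathcal{A}[\epsilon]^n$ is equivalent to exactness of $0\to FX\to FY\to FZ\to 0$ in $\mathcal{A}$, and every degree of the three complexes is precisely $FX$, $FY$, $FZ$.

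Finally, applying the classical long exact homology sequence associated to a short exact sequence of complexes in the abelian category $\mathcal{A}$ produces an infinite exact sequence whose terms cycle through $\H_{(r)}$ and $\H_{(n-r)}$ of $X,Y,Z$, which is exactly the asserted sequence; the connecting morphism $\partial$ is the one furnished by the snake lemma, passing from a spot of type $\H_{(r)}$ to the adjacent spot of type $\H_{(n-r)}$. I expect the only delicate points to be bookkeeping rather than substance: one must check that the two-periodic indexing makes $\partial$ send $\H_{(r)}(Z)$ into $\H_{(n-r)}(X)$ and $\H_{(n-r)}(Z)$ into $\H_{(r)}(X)$ (using $n-(n-r)=r$), so that the sequence reads precisely as stated, and one should note the degenerate case $n=2r$, where $\H_{(r)}=\H_{(n-r)}$ and $C^\bullet(X,r)$ carries the single repeated differential $\epsilon_X^{r}$, recovering the usual periodic sequence. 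The genuine content, namely exactness, is entirely carried by the standard homology long exact sequence, so there is no essential obstacle once the two-periodic complex is in place.
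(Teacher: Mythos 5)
Your proposal is correct and follows essentially the same route as the paper: the authors likewise form the two-periodic complex with alternating differentials $\epsilon_X^{r}$ and $\epsilon_X^{n-r}$, note that $f$ and $g$ give a degreewise short exact sequence of such complexes, and invoke the standard long exact homology sequence (citing Weibel, Theorem 1.3.1). Your additional bookkeeping remarks about the direction of $\partial$ and the case $n=2r$ are consistent with, and slightly more explicit than, what the paper records.
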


\begin{proof}
For any $n$-th differential object $(X, \epsilon_X)$, we may construct a complex
$$\cdots \buildrel {{\epsilon_X}^{n-r}} \over \longrightarrow X \buildrel {{\epsilon_X}^r} \over \longrightarrow
X \buildrel {{\epsilon_X}^{n-r}} \over \longrightarrow \cdots.$$
Consider the following diagram
$$\xymatrix{ & \vdots \ar[d]^{{\epsilon_X}^{r}} & \vdots \ar[d]^{{\epsilon_Y}^{r}} &  \vdots \ar[d]^{{\epsilon_Z}^{r}} \\
 0 \ar[r] & X \ar[r]^f \ar[d]^{{\epsilon_X}^{n-r}}& Y \ar[d]^{{\epsilon_Y}^{n-r}} \ar[r]^g & Z \ar[d]^{{\epsilon_Z}^{n-r}} \ar[r]& 0 \\
 0 \ar[r] & X \ar[r]^f \ar[d]^{{\epsilon_X}^{r}}& Y \ar[d]^{{\epsilon_Y}^{r}} \ar[r]^g & Z \ar[d]^{{\epsilon_Z}^{r}} \ar[r] & 0 \\
 0 \ar[r] & X \ar[r]^f \ar[d]^{{\epsilon_X}^{n-r}}& Y \ar[d]^{{\epsilon_Y}^{n-r}} \ar[r]^g & Z \ar[d]^{{\epsilon_Z}^{n-r}} \ar[r] & 0 \\
 & \vdots & \vdots & \vdots}$$ in $\mathcal{A}$. Then the desired exact sequence follows from \cite[Theorem 1.3.1]{We}.
\end{proof}

We use $\K^a(\mathcal{A}[\epsilon]^n)$ to denote the full subcategory of $\K(\mathcal{A}[\epsilon]^n)$
consisting of all acyclic objects.

\begin{proposition} \label{prop: 5.4}
$\K^a(\mathcal{A}[\epsilon]^n)$ is a thick triangulated subcategory of $\K(\mathcal{A}[\epsilon]^n)$.
\end{proposition}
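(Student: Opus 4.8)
The plan is to use the homology functors $\H_{(r)}$ ($1\leqslant r\leqslant n-1$) as detectors of acyclicity and to show that they are compatible with the shift and with the distinguished triangles of $\K(\mathcal{A}[\epsilon]^n)$. By Proposition~\ref{prop: 5.2} each $\H_{(r)}$ factors through the homotopy relation, hence descends to a functor on $\K(\mathcal{A}[\epsilon]^n)$; since functors preserve isomorphisms, $\H_{(r)}(f)$ is an isomorphism whenever $f$ is an isomorphism in $\K(\mathcal{A}[\epsilon]^n)$, and therefore acyclicity is an invariant of the isomorphism class in $\K(\mathcal{A}[\epsilon]^n)$. In particular $\K^a(\mathcal{A}[\epsilon]^n)$ is a full subcategory closed under isomorphism.

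First I would establish closure under $\Sigma^{\pm 1}$. Recall the short exact sequence $X\xrightarrow{i''_X}TX\xrightarrow{p''_X}X''$ of Lemma~\ref{lem: 3.5}, where $X''=\Sigma X$ by Corollary~\ref{cor: 4.5}. Since $TX=(X^{\oplus n},\epsilon_{X^{\oplus n}})$ is acyclic (as noted after Definition~\ref{df: 5.1}), the long exact sequence of Lemma~\ref{lem: 5.3} collapses to isomorphisms $\H_{(r)}(\Sigma X)\cong\H_{(n-r)}(X)$ for every $r$. Thus $\Sigma X$ is acyclic if and only if $X$ is, which yields closure under both $\Sigma$ and $\Sigma^{-1}$.

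Next I would treat the two-out-of-three property along triangles. By Happel's description (\cite[Chapter~I, Section~2]{Ha}) every distinguished triangle of $\underline{\mathcal{A}[\epsilon]^n}=\K(\mathcal{A}[\epsilon]^n)$ is isomorphic to a standard one $X\xrightarrow{f}Y\xrightarrow{u}\Cone(f)\xrightarrow{v}\Sigma X$, which arises from the pushout conflation $0\to X\to Y\oplus TX\to\Cone(f)\to 0$; this is a genuine short exact sequence in the abelian category $\mathcal{A}[\epsilon]^n$, so Lemma~\ref{lem: 5.3} applies to it. Because $TX$ is acyclic and $\H_{(r)}$ is additive, $\H_{(r)}(Y\oplus TX)\cong\H_{(r)}(Y)$, so the resulting long exact homology sequence effectively links $X$, $Y$ and $\Cone(f)$. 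If two of these three objects are acyclic, then every second term of this sequence vanishes, and exactness forces the homology of the third to vanish as well; together with the isomorphism-invariance above, the third vertex of any distinguished triangle is acyclic whenever the other two are.

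Finally, for thickness I would use additivity once more: $\H_{(r)}(W\oplus W')\cong\H_{(r)}(W)\oplus\H_{(r)}(W')$. If a direct summand sits inside an acyclic object, that is, $W\oplus W'$ is acyclic in $\K(\mathcal{A}[\epsilon]^n)$, then $\H_{(r)}(W)\oplus\H_{(r)}(W')=0$ for each $r$, whence $\H_{(r)}(W)=0$ and $W$ is acyclic; so $\K^a(\mathcal{A}[\epsilon]^n)$ is closed under direct summands. I expect the only delicate point to be the bridge in the third paragraph: one must confirm that the conflation underlying a standard triangle really is a short exact sequence for the abelian structure of $\mathcal{A}[\epsilon]^n$ (so that Lemma~\ref{lem: 5.3} is legitimately applicable) and that its middle term is homologically indistinguishable from $Y$. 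Once this link between the triangulated and the abelian structures is secured, the remaining arguments are routine diagram chases in the long exact sequence.
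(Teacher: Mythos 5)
Your proposal is correct and follows essentially the same route as the paper: closure under $\Sigma^{\pm1}$ via the conflation $X\to TX\to\Sigma X$ of Lemma~\ref{lem: 3.5} together with acyclicity of $TX$ and the long exact sequence of Lemma~\ref{lem: 5.3}, and the two-out-of-three property via the short exact sequence $0\to X\to Y\oplus TX\to\Cone(f)\to 0$ extracted from the standard-triangle pushout. The only differences are that you spell out details the paper leaves implicit (descent of $\H_{(r)}$ to the homotopy category, the collapse of the long exact sequence to isomorphisms $\H_{(r)}(\Sigma X)\cong\H_{(n-r)}(X)$, and additivity of $\H_{(r)}$ for closure under direct summands, which the paper dismisses as obvious).
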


\begin{proof}
By Corollary ~\ref{cor: 4.5}, there exists an exact sequence
$$0\to X\to TX\to \Sigma X\to 0$$ in $\mathcal{A}[\epsilon]^n$. Note that $TX$ is always acyclic.
If $X$ is acyclic, then $\Sigma X$ is also acyclic by Lemma ~\ref{lem: 5.3}. It implies that
$\K^a(\mathcal{A}[\epsilon]^n)$ is closed under $\Sigma$. Dually, $\K^a(\mathcal{A}[\epsilon]^n)$
is closed under $\Sigma^{-1}$. Let
$$X\buildrel {\underline{f}} \over \longrightarrow Y\buildrel {\underline{g}} \over \longrightarrow
Z\buildrel {\underline{h}} \over \longrightarrow \Sigma X$$ be a triangle in $\K(\mathcal{A}[\epsilon]^n)$
with $X$ and $Y$ acyclic. We have to prove that $Z$ is acyclic as well. By Corollary ~\ref{cor: 4.5},
$Z\cong \Cone(f)$ in $\K(\mathcal{A}[\epsilon]^n)$. It suffices to show that $\Cone(f)$ is acyclic by
Proposition ~\ref{prop: 5.2}. Indeed, we have the following commutative diagram of exact sequences
$$\xymatrix{
0 \ar[r] & X \ar[r]^{i_X''} \ar[d]^{f}&  TX \ar[d]^h \ar[r]^{p_X''} &  \Sigma X \ar@{=}[d] \ar[r] & 0\\
0 \ar[r] & Y \ar[r]^{g} &  \Cone(f)  \ar[r]^{h} &   \Sigma X \ar[r] & 0}$$
in $\mathcal{A}[\epsilon]^n$. Then we get an exact sequence
$$0\to X\to Y\oplus TX\to \Cone(f)\to 0$$ in $\mathcal{A}[\epsilon]^n$. Since $TX$ is acyclic,
we obtain that $\Cone(f)$ is also acyclic by Lemma ~\ref{lem: 5.3}. Obviously $\K^a(\mathcal{A}[\epsilon]^n)$
is closed under direct summands. The proof is finished.
\end{proof}

\begin{definition}\label{df: 5.5}
\begin{enumerate}
\item[]
\item A morphism $f: X \to Y$ of $\K(\mathcal{A}[\epsilon]^n)$ is called a {\it quasi-isomorphism} if
$\H_{(r)}(f) : \H_{(r)}(X) \to \H_{(r)}(Y)$ is an isomorphism for any $1\leqslant r \leqslant n-1$, or equivalently
by Lemma ~\ref{lem: 5.3}, $\Cone(f)$ is acyclic.
\item The {\it derived category} of
$n$-differential objects is defined as the quotient category
$$\D(\mathcal{A}[\epsilon]^n):=\K(\mathcal{A}[\epsilon]^n)/\K^a(\mathcal{A}[\epsilon]^n).$$
\end{enumerate}
\end{definition}

Actually, in view of Definition ~\ref{df: 4.6}, the homotopy category and derived category of $n$-differential
objects in $\mathcal{A}[\epsilon]^n$ differ from that of complexes in $\mathcal{A}$.

By definition, a morphism in $\K(\mathcal{A}[\epsilon]^n)$ is a quasi-isomorphism if and only if it is an
isomorphism in $\D(\mathcal{A}[\epsilon]^n)$.  Let us present the following definitions in order to
simplify some statements and notations.

\begin{definition}\label{df: 5.6}
\begin{enumerate}
\item[]
\item We say that $X\in \ob\K(\mathcal{A}[\epsilon]^n)$ is {\it $K$-projective} if $\Hom_{\K(\mathcal{A}[\epsilon]^n)}(X,Y)=0$
for any $Y\in \ob\K^a(\mathcal{A}[\epsilon]^n)$. Dually we say that $X\in \ob\K(\mathcal{A}[\epsilon]^n)$ is {\it $K$-injective}
if $\Hom_{\K(\mathcal{A}[\epsilon]^n)}(Y,X)=0$ for any $Y\in \ob\K^a(\mathcal{A}[\epsilon]^n)$. We denote by
$\K^p(\mathcal{A}[\epsilon]^n)$ (resp. $\K^i(\mathcal{A}[\epsilon]^n))$ the full subcategory of
$\K(\mathcal{A}[\epsilon]^n)$ consisting of $K$-projective (resp. $K$-injective) $n$-th differential objects.
\item Assume that $\mathcal{A}$ has enough projective and injective objects. A {\it projective resolution}
(resp. {\it injective resolution}) of $X\in \ob\K(\mathcal{A}[\epsilon]^n)$ is a quasi-isomorphism
$P_X \to X$ (resp. $X \to I_X$) with $P_X\in \ob\K^p(\mathcal{A}[\epsilon]^n)$ and $F(P_X)$ projective
(resp. $I_X \in \ob\K^i(\mathcal{A}[\epsilon]^n)$ and $F(I_X)$ injective).
\end{enumerate}
\end{definition}

We have the following

\begin{proposition} \label{prop: 5.7}
\begin{enumerate}
\item[]
\item If $X$ is projective (resp. injective) in $\mathcal{A}$, then $(X,0)$ (resp. $(0,X))$ is $K$-projective (resp. $K$-injective).
\item $\K^p(\mathcal{A}[\epsilon]^n)$ and $\K^i(\mathcal{A}[\epsilon]^n)$ are triangulated subcategories of $\K(\mathcal{A}[\epsilon]^n)$.
\end{enumerate}
\end{proposition}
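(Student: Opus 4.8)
The plan is to treat the two parts separately: part (1) is a direct homotopy computation exploiting the special shape of the null-homotopy relation for objects carrying the zero differential, while part (2) follows formally from the observation that $K$-projectivity and $K$-injectivity are one-sided orthogonality conditions against the triangulated subcategory $\K^a(\mathcal{A}[\epsilon]^n)$ of Proposition~\ref{prop: 5.4}.

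For part (1), let $X$ be projective in $\mathcal{A}$ and let $(Y,\epsilon_Y)$ be any acyclic object. First I would record that a morphism $f\colon (X,0)\to (Y,\epsilon_Y)$ in $\mathcal{A}[\epsilon]^n$ must satisfy $\epsilon_Y f=0$, so that $\im f\subseteq \Ker\epsilon_Y$; and that, by Definition~\ref{df: 4.6}, such an $f$ is null-homotopic precisely when $f=\epsilon_Y^{n-1}s$ for some $s\colon X\to Y$ in $\mathcal{A}$, since every summand of the homotopy formula containing a factor $\epsilon_X=0$ vanishes. The key input is acyclicity in degree $r=1$, namely $\H_{(1)}(Y)=\Ker\epsilon_Y/\im\epsilon_Y^{n-1}=0$, which gives $\Ker\epsilon_Y=\im\epsilon_Y^{n-1}$. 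Hence $\im f\subseteq\im\epsilon_Y^{n-1}$, and since the corestriction $\epsilon_Y^{n-1}\colon Y\to\im\epsilon_Y^{n-1}$ is an epimorphism while $X$ is projective, $f$ lifts to some $s\colon X\to Y$ with $\epsilon_Y^{n-1}s=f$. Thus $f$ is null-homotopic, proving $\Hom_{\K(\mathcal{A}[\epsilon]^n)}((X,0),(Y,\epsilon_Y))=0$. The injective case is dual: for $X$ injective and $(Y,\epsilon_Y)$ acyclic, a morphism $g\colon (Y,\epsilon_Y)\to (X,0)$ satisfies $g\epsilon_Y=0$ and is null-homotopic iff $g=s\epsilon_Y^{n-1}$; here acyclicity in degree $r=n-1$ gives $\Ker\epsilon_Y^{n-1}=\im\epsilon_Y\subseteq\Ker g$, so $g$ factors through the monomorphism $Y/\Ker\epsilon_Y^{n-1}\hookrightarrow Y$ induced by $\epsilon_Y^{n-1}$, and injectivity of $X$ lets me extend to obtain $s$ with $s\epsilon_Y^{n-1}=g$.

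For part (2), I would observe that, by Definition~\ref{df: 5.6}, $\K^p(\mathcal{A}[\epsilon]^n)$ is exactly the left orthogonal ${}^{\perp}\K^a(\mathcal{A}[\epsilon]^n)$ and $\K^i(\mathcal{A}[\epsilon]^n)$ the right orthogonal, inside the triangulated category $\K(\mathcal{A}[\epsilon]^n)$. Both are full additive subcategories containing $0$ and closed under finite direct sums, so it remains to verify closure under $\Sigma^{\pm1}$ and under cones. Closure under shifts uses the identity $\Hom(\Sigma X,W)\cong\Hom(X,\Sigma^{-1}W)$ together with Proposition~\ref{prop: 5.4}, which guarantees $\Sigma^{\pm1}W\in\K^a(\mathcal{A}[\epsilon]^n)$ whenever $W$ is acyclic. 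For cones, given a triangle $X\to Y\to Z\to\Sigma X$ with $X,Y\in\K^p(\mathcal{A}[\epsilon]^n)$ and any acyclic $W$, applying $\Hom_{\K(\mathcal{A}[\epsilon]^n)}(-,W)$ yields a long exact sequence in which the neighbouring terms $\Hom(\Sigma X,W)$ and $\Hom(Y,W)$ both vanish, forcing $\Hom(Z,W)=0$; hence $Z\in\K^p(\mathcal{A}[\epsilon]^n)$. The dual computation with $\Hom_{\K(\mathcal{A}[\epsilon]^n)}(W,-)$ handles $\K^i(\mathcal{A}[\epsilon]^n)$.

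The main obstacle I anticipate lies entirely in part (1): one must phrase the lifting and extension steps correctly in an arbitrary abelian category, where the inclusion $\im f\subseteq\im\epsilon_Y^{n-1}$ and the factorization of $\epsilon_Y^{n-1}$ through its (co)image have to be read via the canonical factorization of a morphism into an epimorphism followed by a monomorphism rather than elementwise. Once the right degree of acyclicity is matched to the surviving term of the homotopy formula ($r=1$ on the projective side, $r=n-1$ on the injective side), everything reduces to the defining lifting and extension properties of projective and injective objects. Part (2) is then purely formal, relying only on Proposition~\ref{prop: 5.4} and the long exact sequences attached to a triangle.
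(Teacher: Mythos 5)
Your proposal is correct and follows essentially the same route as the paper: part (1) is the same lifting/extension argument, using that acyclicity makes $Y\xrightarrow{\epsilon_Y^{n-1}}Y\xrightarrow{\epsilon_Y}Y$ exact (your $\H_{(1)}(Y)=0$) together with the collapse of the null-homotopy formula to the single term $\epsilon_Y^{n-1}s$ when $\epsilon_X=0$, and part (2) is the same orthogonality argument via the long exact sequence attached to a triangle. The extra care you take with the epi--mono factorization in an arbitrary abelian category only spells out what the paper leaves implicit.
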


\begin{proof}
(1) Assume that $X$ is projective and $(Y,\epsilon_Y)$ is acyclic. Take $f$ to be a morphism from $(X,0)$ to $(Y,\epsilon_Y)$ in
$\mathcal{A}[\epsilon]^n$. Then we have $\epsilon_Yf=0$. Since $X$ is projective and the sequence
$$Y\buildrel {{\epsilon_Y}^{n-1}} \over \longrightarrow Y \buildrel {{\epsilon_Y}} \over \longrightarrow Y$$ is exact in $\mathcal{A}$,
there exists a morphism $s: X\to Y$ such that $f={\epsilon_Y}^{n-1}s$. Since $\epsilon_X=0$, we have
$$f={\epsilon_Y}^{n-1}s={\epsilon_{Y}}^{n-1}s+{\epsilon_{Y}}^{n-2}s{\epsilon_{X}}+\cdots + s{\epsilon_{X}}^{n-1}$$
and $(X,0)$ is $K$-projective. Dually, we get the other assertion.

(2) It is clear that $\K^p(\mathcal{A}[\epsilon]^n)$ is closed under isomorphisms and translation. Now assume that
$$X\to Y\to Z\to \Sigma X$$ is a triangle in $\K(\mathcal{A}[\epsilon]^n))$ with $X,Y\in \ob\K^p(\mathcal{A}[\epsilon]^n)$.
For any $M\in \ob\K^a(\mathcal{A}[\epsilon]^n)$, applying the functor $\Hom_{\K(\mathcal{A}[\epsilon]^n))}(-,M)$ yields
the following exact sequence
$$\Hom_{\K(\mathcal{A}[\epsilon]^n))}(\Sigma X,M)\to \Hom_{\K(\mathcal{A}[\epsilon]^n))}(Z,M)\to \Hom_{\K(\mathcal{A}[\epsilon]^n))}(Y,M).$$
The end terms vanish by assumption, hence the middle term also vanishes, which implies that $Z$ is $K$-projective.
We conclude that $\K^p(\mathcal{A}[\epsilon]^n)$ is a triangulated subcategory of $\K(\mathcal{A}[\epsilon]^n)$.
Dually, $\K^i(\mathcal{A}[\epsilon]^n)$ is also a triangulated subcategory of $\K(\mathcal{A}[\epsilon]^n)$.
\end{proof}

Recall from \cite{P} that an abelian category $\mathcal{A}$ is an {\it Ab4-category} (resp. {\it Ab4$^*$-category})
provided that it has an arbitrary coproduct (resp. product) of objects and the coproduct (resp. product) of
monomorphisms (resp., epimorphisms) is monic (resp. epic). The following lemma is crucial in proving Theorem ~\ref{thm: 5.11}.

\begin{lemma}\label{lem: 5.8}
\begin{enumerate}
\item[]
\item If $\mathcal{A}$ is an Ab4-category with enough projectives,
then any $X\in \ob\K(\mathcal{A}[\epsilon]^n)$ has a projective resolution.
\item If $\mathcal{A}$ is an Ab4$^*$-category with enough injectives,
then any $X\in \ob\K(\mathcal{A}[\epsilon]^n)$ has an injective resolution.
\end{enumerate}
\end{lemma}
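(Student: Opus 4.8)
The plan is to prove (1); statement (2) then follows by the evident dual argument (replacing projectives, coproducts and the Ab4 hypothesis by injectives, products and the Ab4$^*$ hypothesis, and homotopy colimits by homotopy limits). The strategy is to produce $P_X$ as a homotopy colimit of a tower of ``cellular approximations'' assembled from the simplest $K$-projective objects, and then to invoke the Ab4 condition to see that this homotopy colimit computes the homologies $\H_{(r)}$ correctly.

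First I would isolate the relevant class of objects. Let $\mathcal{P}\subseteq\K(\mathcal{A}[\epsilon]^n)$ be the full subcategory of those $K$-projective objects $P$ for which $F(P)$ is projective in $\mathcal{A}$; these are exactly the objects permitted in a projective resolution. By Proposition~\ref{prop: 5.7}(1), $(Q,0)\in\mathcal{P}$ for every projective $Q\in\mathcal{A}$. By Proposition~\ref{prop: 5.7}(2) the $K$-projective objects form a triangulated subcategory, and $\mathcal{P}$ is moreover closed under arbitrary coproducts: for acyclic $A$ one has $\Hom_{\K(\mathcal{A}[\epsilon]^n)}(\coprod_i P_i,A)\cong\prod_i\Hom_{\K(\mathcal{A}[\epsilon]^n)}(P_i,A)=0$, while $F$ preserves coproducts (being left adjoint to $T$ by Proposition~\ref{prop: 3.1}) and sends $\Sigma$ and $\Cone$ to the projective-preserving expressions of Corollary~\ref{cor: 4.5}; since coproducts and finite direct sums of projectives are projective, $F(\coprod_i P_i)$ and $F(\Cone f)$ stay projective. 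Thus $\mathcal{P}$ is a coproduct-closed triangulated subcategory containing all $(Q,0)$.

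The next step is an approximation statement: for every $X\in\ob\K(\mathcal{A}[\epsilon]^n)$ there are $P\in\mathcal{P}$ and $f\colon P\to X$ with $\H_{(r)}(f)$ an epimorphism for all $1\leqslant r\leqslant n-1$. Here I would use that $\mathcal{A}$ has enough projectives: for each $r$ choose an epimorphism $Q_r\to\H_{(r)}(X)$ with $Q_r$ projective and lift it, by projectivity of $Q_r$, along the quotient $\Ker {\epsilon_X}^r\to\H_{(r)}(X)$ to a map $Q_r\to\Ker {\epsilon_X}^r\subseteq X$. Each such lift extends to a morphism into $X$ from the Jordan cell $(Q_r^{\oplus r},J_r)$ (underlying object $Q_r^{\oplus r}$, with $\epsilon$ the nilpotent shift of order $r$), sending the top generator to the lift and the lower generators to its successive ${\epsilon_X}$-translates. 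A direct homotopy computation, exactly as in the proof of Proposition~\ref{prop: 5.7}(1) and using $r\leqslant n-1$ so that $\Ker{\epsilon_Y}^r=\im{\epsilon_Y}^{n-r}$ on an acyclic target $Y$, shows these cells lie in $\mathcal{P}$. Letting $P$ be the coproduct of these cells over all $r$ yields the required $f$, epic on every $\H_{(r)}$, with $P\in\mathcal{P}$ by the closure properties above.

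Finally I would iterate and pass to a limit. Starting from $X_0=X$, choose $f_0\colon P_0\to X_0$ as above and complete it to a triangle $X_1\to P_0\to X_0\to\Sigma X_1$; the long exact sequence of Lemma~\ref{lem: 5.3} shows that the homology of the fibre $X_1$ is pushed one step deeper. Repeating produces approximations $f_k\colon P_k\to X_k$ with fibres $X_{k+1}$, which I would splice into an increasing tower $Y_0\to Y_1\to\cdots$ in $\mathcal{P}$ whose successive cones are the $P_k$ and which carries compatible maps $Y_k\to X$ under which the induced maps on each $\H_{(r)}$ stabilize to isomorphisms. Setting $P_X$ to be the homotopy colimit (mapping telescope) of this tower, coproduct-closure of $\mathcal{P}$ gives $P_X\in\mathcal{P}$, and it remains to see that the induced $P_X\to X$ is a quasi-isomorphism. \textbf{This last point is the main obstacle, and it is precisely where the Ab4 hypothesis enters}: because in an Ab4-category a coproduct of monomorphisms is again monic, the filtered colimit $\varinjlim$ is exact enough to commute with each subquotient functor $\H_{(r)}=\Ker{\epsilon}^r/\im{\epsilon}^{n-r}$, whence $\H_{(r)}(P_X)\cong\varinjlim_k\H_{(r)}(Y_k)\cong\H_{(r)}(X)$ for every $r$. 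Granting this interchange, $P_X\to X$ is a quasi-isomorphism and $P_X$ is the desired projective resolution; the dual construction, with products and homotopy limits controlled by Ab4$^*$, proves (2).
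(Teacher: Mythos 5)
Your overall strategy (attach Jordan cells $T^{r}(Q_r)$ to surject onto each $\H_{(r)}$, iterate to build a tower of $K$-projectives with projective underlying object, and pass to a mapping telescope) is a legitimate self-contained alternative to the paper's argument, which instead regards $X$ as the periodic $N$-complex $\cdots\xrightarrow{\epsilon_X}X\xrightarrow{\epsilon_X}X\xrightarrow{\epsilon_X}\cdots$ and quotes the construction of $K$-projective resolutions of $N$-complexes from Iyama--Kato--Miyachi, noting that applied to a periodic input it returns a periodic output, i.e.\ again a differential object. Your approximation step is correct (it is essentially the Claim in the proof of Theorem 5.17), and the bookkeeping for the tower, though only sketched, is standard.

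There is, however, a genuine gap exactly at the point you yourself flag as the main obstacle. The identification $\H_{(r)}(\operatorname{hocolim}_k Y_k)\cong\varinjlim_k\H_{(r)}(Y_k)$ does not follow from Ab4. Applying the homology long exact sequence to the defining triangle $\coprod_k Y_k\xrightarrow{1-s}\coprod_k Y_k\to \operatorname{hocolim}_k Y_k\to\Sigma\coprod_k Y_k$, and using that Ab4 makes coproducts exact (so $\H_{(r)}$ does commute with coproducts, and $\H_{(r)}(\Sigma Z)\cong\H_{(n-r)}(Z)$), what you actually obtain is a short exact sequence
$$0\to\varinjlim_k\H_{(r)}(Y_k)\to\H_{(r)}(\operatorname{hocolim}_k Y_k)\to\Ker\bigl(1-s\ \text{on}\ \textstyle\coprod_k\H_{(n-r)}(Y_k)\bigr)\to 0.$$
The right-hand kernel is a $\varprojlim^1$-type obstruction, and its vanishing is an Ab5 phenomenon, not an Ab4 one: $1-s$ on a countable coproduct is not a coproduct of monomorphisms with disjoint targets, so the axiom you invoke does not apply, and the usual proof that it is monic (it is split monic on each finite stage) needs exactness of the sequential union. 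Concretely, $(\Mod\mathbb{Z})^{\op}$ is Ab4 with enough projectives, yet there $1-s$ on a countable coproduct is, read back in $\Mod\mathbb{Z}$, the map on a countable product whose cokernel is a classical nonvanishing $\varprojlim^1$; so it fails to be monic. Hence, as written, your telescope may acquire extra homology and the map $P_X\to X$ need not be a quasi-isomorphism. To close the gap you would have to show the relevant kernel vanishes for the particular tower you build (or rearrange the construction to force this), or else fall back on the $N$-complex resolution imported by the paper.
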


\begin{proof}
(1) Let $X\in \ob\K(\mathcal{A}[\epsilon]^n)$. Then there exists a sequence
$${\bf X}:\cdots \buildrel {\epsilon_X} \over \longrightarrow X \buildrel {\epsilon_X} \over \longrightarrow
X \buildrel {\epsilon_X} \over \longrightarrow \cdots$$ with ${\epsilon_X}^n=0$. It is a special $N$-complex
in the language of \cite{IKM}. Then by the proof of \cite[Theorem 3.17]{IKM}, there exists an $N$-quasi-isomorphism
$s: {\bf P} \to {\bf X}$ as follows:
$$\xymatrix{
{\bf P}: \ar[d]^s & \cdots  \ar[r]^{d} & P \ar[r]^{d} \ar[d]^{s}
&  P \ar[r]^{d} \ar[d]^{s} &  P \ar[r]^{d} \ar[d]^{s} & \cdots \\
{\bf X}:  & \cdots \ar[r]^{\epsilon_X} &   X \ar[r]^{\epsilon_X}  &    X \ar[r]^{\epsilon_X}
& X \ar[r]^{\epsilon_X}  & \cdots}$$ with ${\bf P}\in \K^p_N(\mathcal{A})$ and $P^i$ projective in $\mathcal{A}$.
Thus $(P, d)$ is an $n$-th differential object
and $s: P\to X$ is a projective resolution of $X$.

(2) It is dual to (1).
\end{proof}

In order to demonstrate the key result in this section, we also need the following two definitions.

\begin{definition}\label{df: 5.9}(\cite{Mi})
Let $\mathcal{T}$ be a triangulated category. A pair $(\mathcal{U},\mathcal{V})$ of full triangulated subcategories
of $\mathcal{T}$ is called a {\it stable $t$-structure} in $\mathcal{T}$ provided that
$\Hom_{\mathcal{T}}(\mathcal{U},\mathcal{V})=0$ and
$\mathcal{T}=\mathcal{U}*\mathcal{V}:=\{t\in \mathcal{T}\mid$ there exists a triangle
$$u\to t\to v\to \Sigma u$$
with $u\in \mathcal{U}$ and $v\in \mathcal{V}\}$.
\end{definition}

\begin{definition}\label{df: 5.10}(\cite{BBD})
We call a diagram
$$\xymatrix{
\mathcal{D}'  \ar[r]^{i_*}& \mathcal{D} \ar@/^1pc/[l]_{i^!} \ar@/_1pc/[l]_{i^*}
\ar[r]^{j^*} & \mathcal{D}'' \ar@/^1pc/[l]_{j_*} \ar@/_1pc/[l]_{j_!} }$$
of triangulated categories and functors a {\it recollement} if the following conditions are satisfied.
\begin{enumerate}
\item $i_*$, $j_!$ and $j_*$ are fully faithful.
\item $(i^*,i_*)$, $(i_*,i^!)$, $(j_!,j^*)$ and $(j^*,j_*)$ are adjoint pairs.
\item There exist canonical embeddings $\im j_! \hookrightarrow \Ker i^*$, $\im i_* \hookrightarrow \Ker j^*$
and $\im j_* \hookrightarrow \Ker i^!$, which are equivalences.
\end{enumerate}
\end{definition}

Let $R$ be a ring and $\K(R)$ and $\D(R)$ its homotopy and derived categories respectively.
The kernel of the quotient functor $Q: \K(R)\to \D(R)$ is precisely the full subcategory $\K^a(R)$ of
all exact complexes (modulo the chain homotopy relation). The localization
$$\K^a(R) \buildrel {i} \over \longrightarrow \K(R)\buildrel {Q} \over \longrightarrow \D(R)$$
forms the center arrows in the following  recollement diagram (see \cite[Example 4.14]{Kr}).
$$\xymatrix{
\K^a(R)  \ar[r]^{i}& \K(R) \ar@/^1pc/[l] \ar@/_1pc/[l]\ar[r]^{Q} & \D(R). \ar@/^1pc/[l] \ar@/_1pc/[l]}$$
Inspired by this result, we will give the main theorem in this section.

\begin{theorem}\label{thm: 5.11}
\begin{enumerate}
\item[]
\item Assume that $\mathcal{A}$ is an Ab4-category with enough projectives.
Then we have a stable $t$-structure $(\K^p(\mathcal{A}[\epsilon]^n), \K^a(\mathcal{A}[\epsilon]^n))$
in $\K(\mathcal{A}[\epsilon]^n)$ and a triangle equivalence
$\K^p(\mathcal{A}[\epsilon]^n)\simeq \D(\mathcal{A}[\epsilon]^n)$.
\item Assume that $\mathcal{A}$ is an Ab4$^*$-category with enough injectives.
Then we have a stable $t$-structure $(\K^a(\mathcal{A}[\epsilon]^n), \K^i(\mathcal{A}[\epsilon]^n)$
in $\K(\mathcal{A}[\epsilon]^n)$ and a triangle equivalence
$\K^i(\mathcal{A}[\epsilon]^n)\simeq \D(\mathcal{A}[\epsilon]^n)$.
\item  Under the assumptions of (1) and (2), there exists a recollement
$$\xymatrix{
\K^a(\mathcal{A}[\epsilon]^n)  \ar[r]^{i_*}& \K(\mathcal{A}[\epsilon]^n)
\ar@/^1pc/[l]_{i^!} \ar@/_1pc/[l]_{i^*} \ar[r]^{j^*} & \D(\mathcal{A}[\epsilon]^n)
\ar@/^1pc/[l]_{j_*} \ar@/_1pc/[l]_{j_!} }.$$
\end{enumerate}
\end{theorem}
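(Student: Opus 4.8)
The plan is to verify Definition~\ref{df: 5.9} directly for parts (1) and (2), and then to read off the recollement in (3) formally from the two resulting stable $t$-structures. Throughout I write $\mathcal{P}=\K^p(\mathcal{A}[\epsilon]^n)$, $\mathcal{Z}=\K^a(\mathcal{A}[\epsilon]^n)$ and $\mathcal{I}=\K^i(\mathcal{A}[\epsilon]^n)$ for brevity. For (1), the orthogonality $\Hom_{\K(\mathcal{A}[\epsilon]^n)}(\mathcal{P},\mathcal{Z})=0$ is precisely the defining property of $K$-projectivity in Definition~\ref{df: 5.6}, and $\mathcal{P}$, $\mathcal{Z}$ are full triangulated subcategories by Propositions~\ref{prop: 5.7} and~\ref{prop: 5.4}. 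For the factorization axiom I would take an arbitrary $X\in\K(\mathcal{A}[\epsilon]^n)$, invoke Lemma~\ref{lem: 5.8}(1) to obtain a projective resolution $s\colon P_X\to X$, and complete $s$ to a triangle
$$P_X\buildrel {s} \over \longrightarrow X\longrightarrow A\longrightarrow \Sigma P_X.$$
Since $s$ is a quasi-isomorphism, its cone $A$ is acyclic by Definition~\ref{df: 5.5}, so $X\in\mathcal{P}*\mathcal{Z}$ and $(\mathcal{P},\mathcal{Z})$ is a stable $t$-structure. The equivalence $\mathcal{P}\simeq\D(\mathcal{A}[\epsilon]^n)$ is then the standard consequence of the formalism in \cite{Mi}: the composite $\mathcal{P}\hookrightarrow\K(\mathcal{A}[\epsilon]^n)\buildrel {Q} \over \longrightarrow\D(\mathcal{A}[\epsilon]^n)$ is a triangle equivalence because $\mathcal{P}={}^\perp\mathcal{Z}$ and $\mathcal{Z}=\Ker Q$. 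Part (2) is entirely dual, using the injective resolutions from Lemma~\ref{lem: 5.8}(2) and yielding $(\mathcal{Z},\mathcal{I})$ as a stable $t$-structure with $\mathcal{I}\simeq\D(\mathcal{A}[\epsilon]^n)$.

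For (3) the two stable $t$-structures share the common term $\mathcal{Z}$, and from this configuration I would manufacture the six recollement functors. Set $i_*$ to be the inclusion $\mathcal{Z}\hookrightarrow\K(\mathcal{A}[\epsilon]^n)$ and $j^*=Q$. The stable $t$-structure $(\mathcal{P},\mathcal{Z})$ supplies the left adjoint $i^*$ of $i_*$, sending $X$ to the acyclic term $A$ of its canonical triangle $P_X\to X\to A$, together with a fully faithful left adjoint $j_!\colon\D(\mathcal{A}[\epsilon]^n)\simeq\mathcal{P}\hookrightarrow\K(\mathcal{A}[\epsilon]^n)$ of $Q$ from part (1). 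Dually, $(\mathcal{Z},\mathcal{I})$ supplies the right adjoint $i^!$ of $i_*$, sending $X$ to the acyclic term of its triangle $A'\to X\to I$, together with a fully faithful right adjoint $j_*\colon\D(\mathcal{A}[\epsilon]^n)\simeq\mathcal{I}\hookrightarrow\K(\mathcal{A}[\epsilon]^n)$ of $Q$ from part (2).

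It then remains to check the three axioms of Definition~\ref{df: 5.10}. Full faithfulness of $i_*$ is automatic, and that of $j_!$, $j_*$ is built in, since by the formalism of \cite{Mi} the Verdier quotient $Q$ acquires fully faithful left and right adjoints with essential images $\mathcal{P}$ and $\mathcal{I}$ respectively. The pairs $(i^*,i_*)$ and $(i_*,i^!)$ are adjoint by construction, while $(j_!,j^*)$ and $(j^*,j_*)$ are exactly those adjunctions. Finally the three prescribed embeddings are literal equalities of subcategories, $\im j_!=\mathcal{P}=\Ker i^*$, $\im i_*=\mathcal{Z}=\Ker j^*$ and $\im j_*=\mathcal{I}=\Ker i^!$, each read off by recording which term of the relevant canonical triangle vanishes. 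I expect the main obstacle to lie not in (3)---which is the formal dictionary turning a pair of compatible stable $t$-structures into a recollement---but upstream, in securing the factorizations that drive (1) and (2); these rest essentially on the existence of $K$-projective and $K$-injective resolutions furnished by Lemma~\ref{lem: 5.8}, itself built on the $N$-complex techniques of \cite{IKM}.
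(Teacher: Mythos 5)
Your proposal is correct and follows essentially the same route as the paper: orthogonality from Definition~\ref{df: 5.6}, the subcategory structure from Propositions~\ref{prop: 5.4} and~\ref{prop: 5.7}, the factorization $\K(\mathcal{A}[\epsilon]^n)=\K^p(\mathcal{A}[\epsilon]^n)*\K^a(\mathcal{A}[\epsilon]^n)$ via the resolutions of Lemma~\ref{lem: 5.8}, and the recollement read off formally from the two stable $t$-structures (the paper cites \cite{IKM0} and \cite{Mi} for the steps you unwind explicitly).
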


\begin{proof}
(1) It follows from Propositions ~\ref{prop: 5.4} and ~\ref{prop: 5.7} that both $\K^p(\mathcal{A}[\epsilon]^n)$
and $\K^a(\mathcal{A}[\epsilon]^n)$ are triangulated subcategories of $\K(\mathcal{A}[\epsilon]^n)$.
On the other hand, by Lemma ~\ref{lem: 5.8}(1), we have
$$\K(\mathcal{A}[\epsilon]^n)=\K^p(\mathcal{A}[\epsilon]^n)*\K^a(\mathcal{A}[\epsilon]^n).$$
Hence $(\K^p(\mathcal{A}[\epsilon]^n), \K^a(\mathcal{A}[\epsilon]^n))$ is a stable $t$-structure in $\K(\mathcal{A}[\epsilon]^n)$.
Furthermore, it is derived from \cite{JK} or \cite[Lemma 1.6]{IKM} that there exists a triangle equivalence
$\K^p(\mathcal{A}[\epsilon]^n)\simeq \D(\mathcal{A}[\epsilon]^n)$.

(2) It is dual to (1).

(3) By (1) and (2), both $(\K^p(\mathcal{A}[\epsilon]^n), \K^a(\mathcal{A}[\epsilon]^n))$ and
$(\K^a(\mathcal{A}[\epsilon]^n), \K^i(\mathcal{A}[\epsilon]^n)$ are stable $t$-structures.
Now the assertion follows from \cite[Proposition 1.8]{IKM0} (also cf. \cite{Mi}).
\end{proof}

\begin{remark}\label{rem: 5.12}
In general $\D(\mathcal{A}[\epsilon]^n)$ is not at all easy to understand. Even it is difficult to calculate
the morphisms in the derived category $\D(\mathcal{A}[\epsilon]^n)$. However, in particular cases, since
$\K^p(\mathcal{A}[\epsilon]^n)\simeq \D(\mathcal{A}[\epsilon]^n)$ by Theorem ~\ref{thm: 5.11} and $\K^p(\mathcal{A}[\epsilon]^n)$
is a full subcategory of $\K(\mathcal{A}[\epsilon]^n)$, the class of maps between two objects in $\D(\mathcal{A}[\epsilon]^n)$
actually forms a set, and those two triangle equivalences in Theorem ~\ref{thm: 5.11} provide easier ways
to represent morphisms in $\D(\mathcal{A}[\epsilon]^n)$.
\end{remark}

\begin{corollary}\label{cor: 5.13}
Assume that $\mathcal{A}$ is an Ab4-category with enough projectives.
Then both $\K(\mathcal{A}[\epsilon]^n)$ and $\D(\mathcal{A}[\epsilon]^n)$ have arbitrary coproducts.
\end{corollary}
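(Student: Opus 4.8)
The plan is to construct the coproducts from the ground up, first in the underlying category $\mathcal{A}[\epsilon]^n$, then in $\K(\mathcal{A}[\epsilon]^n)$, and finally to transport them to $\D(\mathcal{A}[\epsilon]^n)$ through the triangle equivalence of Theorem~\ref{thm: 5.11}. First I would note that $\mathcal{A}[\epsilon]^n$ itself has arbitrary coproducts: given a family $\{(X_\lambda,\epsilon_{X_\lambda})\}$, form $\bigoplus_\lambda X_\lambda$ in $\mathcal{A}$ (which exists since $\mathcal{A}$ is Ab4) and equip it with $\epsilon:=\bigoplus_\lambda \epsilon_{X_\lambda}$. As $\epsilon^k$ acts summandwise as $\bigoplus_\lambda \epsilon_{X_\lambda}^k$, we get $\epsilon^n=0$, so $(\bigoplus_\lambda X_\lambda,\epsilon)$ is again an $n$-th differential object, the canonical inclusions $\iota_\lambda$ are morphisms in $\mathcal{A}[\epsilon]^n$, and the universal property is immediate.

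The second step is to descend these coproducts to $\K(\mathcal{A}[\epsilon]^n)$, i.e. to establish $\Hom_{\K}(\bigoplus_\lambda X_\lambda, Y)\cong \prod_\lambda \Hom_{\K}(X_\lambda, Y)$. Since $\bigoplus_\lambda X_\lambda$ is already the coproduct at the level of $\mathcal{A}[\epsilon]^n$, it suffices to check that the null-homotopy relation is compatible with the decomposition. Using the homotopy formula of Definition~\ref{df: 4.6} together with $\epsilon_{\bigoplus X}\iota_\lambda=\iota_\lambda\epsilon_{X_\lambda}$, a morphism $f:\bigoplus_\lambda X_\lambda\to Y$ is null-homotopic via $s$ if and only if each $f\iota_\lambda$ is null-homotopic via $s\iota_\lambda$; conversely a compatible family of homotopies $s_\lambda$ assembles into $s=\langle s_\lambda\rangle$, and one verifies that the resulting homotopy has the prescribed composites with the $\iota_\lambda$, hence agrees with the given map. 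This matches the subgroups of null-homotopic morphisms on the two sides and yields the isomorphism above, so $\K(\mathcal{A}[\epsilon]^n)$ has arbitrary coproducts, computed as in $\mathcal{A}[\epsilon]^n$.

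The third step transports coproducts to the derived category. The key observation is that $\K^p(\mathcal{A}[\epsilon]^n)$ is closed under the coproducts just built: for a family $\{P_\lambda\}$ in $\K^p(\mathcal{A}[\epsilon]^n)$ and any acyclic $Y$, we have $\Hom_{\K}(\bigoplus_\lambda P_\lambda, Y)\cong \prod_\lambda \Hom_{\K}(P_\lambda, Y)=0$, so $\bigoplus_\lambda P_\lambda$ is again $K$-projective. Therefore $\K^p(\mathcal{A}[\epsilon]^n)$ has coproducts, formed as in $\K(\mathcal{A}[\epsilon]^n)$, and since Theorem~\ref{thm: 5.11}(1) provides a triangle equivalence $\K^p(\mathcal{A}[\epsilon]^n)\simeq \D(\mathcal{A}[\epsilon]^n)$, the derived category inherits arbitrary coproducts.

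The main obstacle is the bookkeeping in the second step: verifying that null-homotopy is stable under forming coproducts, so that the coproducts genuinely descend from $\mathcal{A}[\epsilon]^n$ to $\K(\mathcal{A}[\epsilon]^n)$ rather than merely existing in the underlying category. Once $\K(\mathcal{A}[\epsilon]^n)$ is known to have coproducts, the passage to $\K^p(\mathcal{A}[\epsilon]^n)$ and hence to $\D(\mathcal{A}[\epsilon]^n)$ is purely formal, using only the universal property of coproducts and the defining vanishing property of $K$-projective objects.
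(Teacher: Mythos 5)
Your proposal is correct and follows essentially the same route as the paper: construct the coproduct in $\mathcal{A}[\epsilon]^n$, check that it remains a coproduct in $\K(\mathcal{A}[\epsilon]^n)$ by showing the null-homotopy relation is compatible with the decomposition, deduce that $\K^p(\mathcal{A}[\epsilon]^n)$ is closed under coproducts, and transport the result through the equivalence $\K^p(\mathcal{A}[\epsilon]^n)\simeq \D(\mathcal{A}[\epsilon]^n)$ of Theorem~\ref{thm: 5.11}. The only cosmetic difference is that the paper verifies the second step via the characterization of null-homotopic maps as those factoring through $TY$ (from the proof of Theorem~\ref{thm: 4.7}) and a diagram of exact rows, whereas you manipulate the explicit homotopy formula of Definition~\ref{df: 4.6} directly; the content is the same.
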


\begin{proof}
We first show that $\K(\mathcal{A}[\epsilon]^n)$ has arbitrary coproducts. Let $\{X_i\}_{i\in I}$ be a family
of objects in $\K(\mathcal{A}[\epsilon]^n)$ and $Y$ an object in $\K(\mathcal{A}[\epsilon]^n)$.
By the proof of Theorem~\ref{thm: 4.7}, we get that a morphism $f: X \to Y$ is null-homotopic if and only if
it factors through $TY$. Then we have the following commutative diagram with exact rows
$$\tiny{\xymatrix{
\Hom_{\mathcal{A}[\epsilon]^n}(\coprod _{i\in I}X_i, TY) \ar[r] \ar[d]^{\cong}&
\Hom_{\mathcal{A}[\epsilon]^n}(\coprod _{i\in I}X_i, Y) \ar[r] \ar[d]^{\cong} &
\Hom_{\K(\mathcal{A}[\epsilon]^n)}(\coprod _{i\in I}X_i, Y) \ar[d] \ar[r]& 0 \\
\prod_{i\in I}\Hom_{\mathcal{A}[\epsilon]^n}(X_i, TY) \ar[r] &
\prod_{i\in I}\Hom_{\mathcal{A}[\epsilon]^n}(X_i, Y)  \ar[r] &
\prod_{i\in I}\Hom_{\mathcal{A}[\epsilon]^n}(X_i, Y)\ar[r] & 0.}}$$
It implies that
$$\Hom_{\K(\mathcal{A}[\epsilon]^n)}(\coprod _{i\in I}X_i, Y)\cong  \prod_{i\in I}\Hom_{\mathcal{A}[\epsilon]^n}(X_i, Y)$$
and arbitrary direct sums exist in $\K(\mathcal{A}[\epsilon]^n)$.

Next, since $\K^p(\mathcal{A}[\epsilon]^n)\simeq \D(\mathcal{A}[\epsilon]^n)$ by Theorem ~\ref{thm: 5.11}, it suffice to show that
$\K^p(\mathcal{A}[\epsilon]^n)$ has arbitrary coproducts. Let $\{X_i\}_{i\in I}\in \K^p(\mathcal{A}[\epsilon]^n)$
and $Y\in \K^a(\mathcal{A}[\epsilon]^n)$. Since
$$\Hom_{\K(\mathcal{A}[\epsilon]^n)}(\coprod _{i\in I}X_i,Y)\cong \prod_{i\in I}\Hom_{\K(\mathcal{A}[\epsilon]^n)}(X_i,Y)=0,$$
we have $\coprod _{i\in I}X_i\in \K^p(\mathcal{A}[\epsilon]^n)$.
\end{proof}

Given the fact that for any ring $R$, the derived category $\D(\Mod R)$ is always compactly generated. It is
natural to ask whether it is possible to get a similar result for $\D(\mathcal{A}[\epsilon]^n)$. To answer this question,
firstly let us recall the following definition.

\begin{definition}\label{df: 5.14}(\cite{St})
Let $\mathcal{T}$ be a triangulated category with arbitrary coproducts. An object $C\in \mathcal{T}$ is called {\it compact}
if for any family $\{Y_i\}_{i\in I}$ of objects of $\mathcal{T}$, the natural morphism
$$\coprod _{i\in I} \Hom_{\mathcal{T}}(C,Y_i)\to \Hom_{\mathcal{T}}(C,\coprod_{i\in I}Y_i)$$ is an isomorphism.
The category $\mathcal{T}$ is said to be {\it compactly generated} if there exists a set
$\mathcal{C}$ of compact objects satisfying the following property: if $X\in \mathcal{T}$ such that $\Hom_{\mathcal{T}}(C,X)=0$
for any $C\in \mathcal{C}$, then $X=0$.
\end{definition}

To state the last theorem of this section, we need the following two results.

\begin{lemma}\label{lem: 5.15}
Let $Y\in \K^p(\mathcal{A}[\epsilon]^n)$ and $f: X\to Y$ be a quasi-isomorphism in $\K(\mathcal{A}[\epsilon]^n)$.
Then there exists a morphism $g: Y\to X$ in $\mathcal{A}[\epsilon]^n$ such that $fg\thicksim 1_Y$.
\end{lemma}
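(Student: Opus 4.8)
The plan is to complete $f$ to a triangle in $\K(\mathcal{A}[\epsilon]^n)$ and then exploit the $K$-projectivity of $Y$ together with the acyclicity of the cone to split $f$ on the right in the homotopy category. Recall that $\K(\mathcal{A}[\epsilon]^n)$ is triangulated, being the stable category of the Frobenius category $(\mathcal{A}[\epsilon]^n,\mathscr{E}_F^t)$ by Theorem~\ref{thm: 4.7} (here $\mathcal{A}$ is abelian, hence idempotent complete, so the hypotheses of that theorem are met). Thus, by Corollary~\ref{cor: 4.5}, the morphism $\underline{f}$ fits into a standard triangle
$$X \buildrel {\underline{f}} \over \longrightarrow Y \buildrel {\underline{u}} \over \longrightarrow \Cone(f) \buildrel {\underline{v}} \over \longrightarrow \Sigma X.$$
Since $f$ is a quasi-isomorphism, Definition~\ref{df: 5.5}(1) gives that $\Cone(f)$ is acyclic, i.e. $\Cone(f)\in \ob\K^a(\mathcal{A}[\epsilon]^n)$.

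Next I would apply the functor $\Hom_{\K(\mathcal{A}[\epsilon]^n)}(Y,-)$, which sends triangles to long exact sequences of abelian groups, to this triangle. This yields the exact sequence
$$\Hom_{\K(\mathcal{A}[\epsilon]^n)}(Y,X) \buildrel {\underline{f}_*} \over \longrightarrow \Hom_{\K(\mathcal{A}[\epsilon]^n)}(Y,Y) \buildrel {\underline{u}_*} \over \longrightarrow \Hom_{\K(\mathcal{A}[\epsilon]^n)}(Y,\Cone(f)).$$
Because $Y$ is $K$-projective and $\Cone(f)$ is acyclic, the rightmost group vanishes by Definition~\ref{df: 5.6}(1). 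Hence $\underline{f}_*$ is surjective, so the class $\underline{1_Y}$ admits a preimage: there is $\underline{g}\in \Hom_{\K(\mathcal{A}[\epsilon]^n)}(Y,X)$ with $\underline{f}\,\underline{g}=\underline{1_Y}$ in $\K(\mathcal{A}[\epsilon]^n)$.

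Finally I would translate this identity in the homotopy category back to the level of $\mathcal{A}[\epsilon]^n$. Choosing a representative $g\colon Y\to X$ in $\mathcal{A}[\epsilon]^n$ of the class $\underline{g}$, the relation $\underline{f}\,\underline{g}=\underline{1_Y}$ says exactly that $fg-1_Y$ is null-homotopic, i.e. $fg\thicksim 1_Y$ in the sense of Definition~\ref{df: 4.6}, which is the desired conclusion. I expect no genuine obstacle in this argument: the only point demanding care is the bookkeeping between $\K(\mathcal{A}[\epsilon]^n)$, whose morphisms are homotopy classes, and $\mathcal{A}[\epsilon]^n$, where $\thicksim$ is the explicit null-homotopy relation of Definition~\ref{df: 4.6}; this is immediate once one recalls that $\Hom_{\K(\mathcal{A}[\epsilon]^n)}(-,-)=\Hom_{\mathcal{A}[\epsilon]^n}(-,-)/\thicksim$.
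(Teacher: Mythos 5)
Your proposal is correct and follows exactly the paper's own argument: complete $f$ to a triangle with acyclic cone, apply $\Hom_{\K(\mathcal{A}[\epsilon]^n)}(Y,-)$, use $K$-projectivity of $Y$ to kill the term $\Hom_{\K(\mathcal{A}[\epsilon]^n)}(Y,\Cone(f))$, and lift $\underline{1_Y}$ along the resulting surjection. The extra care you take in translating the identity $\underline{f}\,\underline{g}=\underline{1_Y}$ back to the relation $fg\thicksim 1_Y$ in $\mathcal{A}[\epsilon]^n$ is a harmless elaboration of what the paper leaves implicit.
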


\begin{proof}
Consider the triangle
$$X\buildrel {f} \over \longrightarrow Y\to \Cone(f) \to \Sigma X$$ in $\K^p(\mathcal{A}[\epsilon]^n)$.
Since $f$ is a quasi-isomorphism, $\Cone(f)$ is acyclic. By applying the functor $\Hom_{\K(\mathcal{A}[\epsilon]^n)}(Y,-)$
to this triangle, we get an exact sequence
$$\Hom_{\K(\mathcal{A}[\epsilon]^n)}(Y,X)\buildrel {\Hom_{\K(\mathcal{A}[\epsilon]^n)}(Y,f)}
\over \longrightarrow \Hom_{\K(\mathcal{A}[\epsilon]^n)}(Y,Y)\longrightarrow \Hom_{\K(\mathcal{A}[\epsilon]^n)}(Y,\Cone(f)).$$
As $Y\in \K^p(\mathcal{A}[\epsilon]^n)$, we have $\Hom_{\K(\mathcal{A}[\epsilon]^n)}(Y,\Cone(f))=0$.
Thus there exists a morphism $g: Y\to X$ in $\mathcal{A}[\epsilon]^n$ such that $fg\thicksim 1_Y$.
\end{proof}

\begin{proposition}\label{prop: 5.16}
Assume that $\mathcal{A}$ is an Ab4-category with enough projectives. Then for any $X\in \K^p(\mathcal{A}[\epsilon]^n)$ and
$Y\in \K(\mathcal{A}[\epsilon]^n)$, there exists an isomorphism of abelian groups
$$\Hom_{\K(\mathcal{A}[\epsilon]^n)}(X,Y)\cong \Hom_{\D(\mathcal{A}[\epsilon]^n)}(X,Y).$$
\end{proposition}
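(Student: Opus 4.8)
The goal is to prove that the canonical map
$$\Phi\colon \Hom_{\K(\mathcal{A}[\epsilon]^n)}(X,Y)\longrightarrow \Hom_{\D(\mathcal{A}[\epsilon]^n)}(X,Y),\qquad f\mapsto Q(f),$$
induced by the quotient functor $Q\colon \K(\mathcal{A}[\epsilon]^n)\to \D(\mathcal{A}[\epsilon]^n)$, is bijective; since $Q$ is an additive triangle functor, $\Phi$ is automatically a homomorphism of abelian groups, so bijectivity is all that remains. Recall that $\D(\mathcal{A}[\epsilon]^n)$ is the Verdier quotient $\K(\mathcal{A}[\epsilon]^n)/\K^a(\mathcal{A}[\epsilon]^n)$, and that the quasi-isomorphisms form a multiplicative system admitting a calculus of fractions; thus any morphism $X\to Y$ in $\D(\mathcal{A}[\epsilon]^n)$ is represented by a roof $X\xleftarrow{s}Z\xrightarrow{g}Y$ with $s$ a quasi-isomorphism, and by Theorem~\ref{thm: 5.11} together with Remark~\ref{rem: 5.12} these classes genuinely form sets under the present hypotheses. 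The entire argument will run off a single property, namely Lemma~\ref{lem: 5.15}: any quasi-isomorphism whose target is $K$-projective admits a homotopy section.

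For surjectivity I would take an arbitrary morphism $\alpha\colon X\to Y$ in $\D(\mathcal{A}[\epsilon]^n)$, written as a roof $X\xleftarrow{s}Z\xrightarrow{g}Y$ so that $\alpha=Q(g)\,Q(s)^{-1}$. Since $X\in \K^p(\mathcal{A}[\epsilon]^n)$ and $s\colon Z\to X$ is a quasi-isomorphism with target $X$, Lemma~\ref{lem: 5.15} furnishes a morphism $t\colon X\to Z$ in $\mathcal{A}[\epsilon]^n$ with $st\thicksim 1_X$. Applying $Q$ gives $Q(s)Q(t)=1_X$, and as $Q(s)$ is invertible we obtain $Q(t)=Q(s)^{-1}$. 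Therefore $\alpha=Q(g)Q(t)=Q(gt)=\Phi(gt)$, where $gt\colon X\to Y$ is a genuine morphism of $\K(\mathcal{A}[\epsilon]^n)$, so $\alpha$ lies in the image of $\Phi$.

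For injectivity I would start with $f\colon X\to Y$ in $\K(\mathcal{A}[\epsilon]^n)$ satisfying $Q(f)=0$. By the calculus of fractions, this means there is a quasi-isomorphism $s\colon W\to X$ with $fs\thicksim 0$ in $\K(\mathcal{A}[\epsilon]^n)$. Once more $X$ is $K$-projective and $s$ is a quasi-isomorphism into $X$, so Lemma~\ref{lem: 5.15} provides $t\colon X\to W$ with $st\thicksim 1_X$, whence
$$f\thicksim f(st)=(fs)t\thicksim 0.$$
Thus $f=0$ in $\K(\mathcal{A}[\epsilon]^n)$, and $\Phi$ is injective; combined with the previous paragraph this proves the proposition.

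The step requiring the most care is fixing the orientation in the calculus of fractions. One must use that the quasi-isomorphisms admit a right calculus of fractions, so that both an arbitrary morphism of $\D(\mathcal{A}[\epsilon]^n)$ and the vanishing criterion $Q(f)=0$ are expressed through quasi-isomorphisms $s\colon Z\to X$ and $s\colon W\to X$ whose target is the $K$-projective object $X$; this is precisely the configuration to which Lemma~\ref{lem: 5.15} applies. With the orientation settled, each half of the proof collapses to a single invocation of the homotopy-section property of $K$-projective objects.
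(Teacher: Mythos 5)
Your proof is correct and follows essentially the same route as the paper: both arguments define the canonical map induced by the quotient functor, represent morphisms in $\D(\mathcal{A}[\epsilon]^n)$ by roofs $X\Leftarrow Z\to Y$ with the quasi-isomorphism pointing into the $K$-projective object $X$, and then invoke Lemma~\ref{lem: 5.15} once for surjectivity (to replace $Q(s)^{-1}$ by $Q(t)$) and once for injectivity (to deduce $f\thicksim (fs)t\thicksim 0$). The only cosmetic difference is that the paper cites Lemma~\ref{lem: 5.8}(1) where you appeal directly to the calculus of fractions.
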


\begin{proof}
Consider the canonical map
$$G: \Hom_{\K(\mathcal{A}[\epsilon]^n)}(X,Y)\to \Hom_{\D(\mathcal{A}[\epsilon]^n)}(X,Y)$$
defined by $G(f)=f/1_X$. If $G(f)=f/1_X=0$, then by Lemma ~\ref{lem: 5.8}(1), there exists a roof
$$X \buildrel {s} \over \Leftarrow X' \buildrel {0} \over \longrightarrow Y$$
such that $s$ is a quasi-isomorphism, which is equivalent to the roof
$$X \buildrel {1_X} \over \Leftarrow X \buildrel {f} \over \longrightarrow Y.$$
Hence we have $fs\thicksim 0$. It follows from Lemma ~\ref{lem: 5.15} that there exists a morphism
$g: X\to X'$ such that $sg\thicksim 1_X$. Thus $f\thicksim 0$. On the other hand, let $f/s\in \Hom_{\D(\mathcal{A}[\epsilon]^n)}(X,Y)$,
that is, it has the form
$$X \buildrel {s} \over \Leftarrow Z \buildrel {f} \over \longrightarrow Y.$$
By Lemma ~\ref{lem: 5.15} again, there exists a morphism $g: X\to Z$ such that $sg\thicksim 1_X$.
Then we obtain that $f/s=fg/1_X=G(fg)$ and $G$ is an isomorphism.
\end{proof}

We end this section with the following result.

\begin{theorem}\label{thm: 5.17}
Assume that $\mathcal{A}$ is an Ab4-category with a compact projective generator.
Then $\D(\mathcal{A}[\epsilon]^n)$ is a compactly generated triangulated category.
\end{theorem}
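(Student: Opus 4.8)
The plan is to exhibit an explicit finite set of compact objects of $\D(\mathcal{A}[\epsilon]^n)$ that jointly detect the zero object. Write $G$ for the given compact projective generator of $\mathcal{A}$. Since $G$ is a projective generator, every object of $\mathcal{A}$ is a quotient of a coproduct of copies of $G$, so $\mathcal{A}$ has enough projectives; thus Theorem~\ref{thm: 5.11}(1) and Corollary~\ref{cor: 5.13} apply, giving a triangle equivalence $\K^p(\mathcal{A}[\epsilon]^n)\simeq \D(\mathcal{A}[\epsilon]^n)$ and arbitrary coproducts in $\D(\mathcal{A}[\epsilon]^n)$. For each $1\leqslant k\leqslant n-1$ I would introduce the $n$-th differential object $G_k:=(G^{\oplus k},\delta_k)$, where $\delta_k$ is the nilpotent ``Jordan block'' sending the $j$-th copy of $G$ identically onto the $(j+1)$-th copy for $1\leqslant j\leqslant k-1$ and the $k$-th copy to $0$; since $k\leqslant n-1$ we have $\delta_k^{\,n}=0$, and $F(G_k)=G^{\oplus k}$ is projective in $\mathcal{A}$.

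The crucial step is a Hom-formula. A morphism $\phi=(\phi_1,\dots,\phi_k)\colon G_k\to (Y,\epsilon_Y)$ in $\mathcal{A}[\epsilon]^n$ is, by the compatibility $\phi\delta_k=\epsilon_Y\phi$, the same datum as a single map $\phi_1\colon G\to Y$ with $\epsilon_Y^{\,k}\phi_1=0$, via $\phi_j=\epsilon_Y^{\,j-1}\phi_1$; hence $\Hom_{\mathcal{A}[\epsilon]^n}(G_k,Y)\cong\Hom_{\mathcal{A}}(G,\Ker\epsilon_Y^{\,k})$. Unwinding Definition~\ref{df: 4.6}, such a $\phi$ is null-homotopic precisely when $\phi_1$ lies in $\epsilon_Y^{\,n-k}\Hom_{\mathcal{A}}(G,Y)=\Hom_{\mathcal{A}}(G,\im\epsilon_Y^{\,n-k})$, the last identification using projectivity of $G$. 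Applying the exact functor $\Hom_{\mathcal{A}}(G,-)$ to the short exact sequence $0\to\im\epsilon_Y^{\,n-k}\to\Ker\epsilon_Y^{\,k}\to\H_{(k)}(Y)\to0$ then yields a natural isomorphism
$$\Hom_{\K(\mathcal{A}[\epsilon]^n)}(G_k,Y)\cong\Hom_{\mathcal{A}}(G,\H_{(k)}(Y)).$$
In particular this vanishes whenever $Y$ is acyclic, so $G_k\in\K^p(\mathcal{A}[\epsilon]^n)$, and Proposition~\ref{prop: 5.16} upgrades the formula to $\Hom_{\D(\mathcal{A}[\epsilon]^n)}(G_k,Y)\cong\Hom_{\mathcal{A}}(G,\H_{(k)}(Y))$ for every $Y$.

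Granting the formula, generation is immediate: if $X$ satisfies $\Hom_{\D(\mathcal{A}[\epsilon]^n)}(G_k,X)=0$ for all $1\leqslant k\leqslant n-1$, then $\Hom_{\mathcal{A}}(G,\H_{(k)}(X))=0$ for all $k$, whence $\H_{(k)}(X)=0$ since $G$ is a generator of $\mathcal{A}$; thus $X$ is acyclic and $X\cong0$ in $\D(\mathcal{A}[\epsilon]^n)$. For compactness I would compute coproducts through the model $\K^p(\mathcal{A}[\epsilon]^n)$: replacing the $Y_i$ by projective resolutions (Lemma~\ref{lem: 5.8}(1)) and invoking Corollary~\ref{cor: 5.13}, the coproduct in $\D(\mathcal{A}[\epsilon]^n)$ is represented by the coproduct taken in $\K(\mathcal{A}[\epsilon]^n)$ (computed objectwise in $\mathcal{A}$). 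Since $\mathcal{A}$ is Ab4, coproducts are exact and commute with the homology functors, $\H_{(k)}(\coprod_i Y_i)\cong\coprod_i\H_{(k)}(Y_i)$; combined with compactness of $G$ in $\mathcal{A}$ (so that $\Hom_{\mathcal{A}}(G,-)$ preserves coproducts) and the Hom-formula, this gives $\Hom_{\D(\mathcal{A}[\epsilon]^n)}(G_k,\coprod_i Y_i)\cong\coprod_i\Hom_{\D(\mathcal{A}[\epsilon]^n)}(G_k,Y_i)$, i.e. each $G_k$ is compact. Therefore $\{G_1,\dots,G_{n-1}\}$ is a set of compact generators and $\D(\mathcal{A}[\epsilon]^n)$ is compactly generated. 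The main obstacle I anticipate is pinning down the correct generating set — a single object sees only $\H_{(1)}$ and $\H_{(n-1)}$, so the intermediate homologies force the family $G_1,\dots,G_{n-1}$ — together with the null-homotopy bookkeeping behind the Hom-formula; the coproduct manipulations, while genuinely using the Ab4 hypothesis and compactness of $G$, are then routine.
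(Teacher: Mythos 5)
Your proposal is correct and follows essentially the same route as the paper: your objects $G_k$ are exactly the paper's $T^k(G)$, your Hom-formula $\Hom_{\K(\mathcal{A}[\epsilon]^n)}(G_k,Y)\cong\Hom_{\mathcal{A}}(G,\H_{(k)}(Y))$ coincides (via exactness of $\Hom_{\mathcal{A}}(G,-)$ for projective $G$) with the paper's claim $\Hom_{\K(\mathcal{A}[\epsilon]^n)}(T^k(G),Y)\cong\H_{(k)}(\Hom_{\mathcal{A}}(G,Y))$, and the generation and compactness steps use Proposition~\ref{prop: 5.16}, Corollary~\ref{cor: 5.13}, the Ab4 hypothesis and compactness of $G$ exactly as in the paper.
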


\begin{proof}
Let $G$ be a compact projective generator in $\mathcal{A}$. Firstly, $\D(\mathcal{A}[\epsilon]^n)$ has arbitrary coproducts
by Corollary ~\ref{cor: 5.13}. For any $1\leqslant i\leqslant n-1$, we use $T^i(G)$ to denote the $n$-th differential module
$(G^i, \epsilon_{G^i})$, where
$$\epsilon_{G^i}:= \left(
\begin{array}{ccccc}
0& 0& 0&\cdots &0 \\
1& 0& 0&\cdots &0 \\
0& 1& 0&\cdots &0 \\
\vdots&\vdots &\ddots& \ddots&\vdots\\
0& 0& \cdots&1 &0\\
\end{array}
\right)_{i\times i.}$$

{\bf Claim.} For any $X\in \K(\mathcal{A}[\epsilon]^n)$ and $1\leqslant i\leqslant n-1$, we have
$$\Hom_{\K(\mathcal{A}[\epsilon]^n)}(T^i(G), X)\cong \H_{(i)}(\Hom_{\mathcal{A}}(G, X)).$$

Given $f=(f_1,f_2,\cdots, f_i)\in \Hom_{\mathcal{A}[\epsilon]^n}(T^i(G), X)$, we define
$$\theta: \Hom_{\mathcal{A}[\epsilon]^n}(T^i(G), X)\to \H_{(i)}(\Hom_{\mathcal{A}}(G, X))$$
via $\theta(f)=f_1+\im {\Hom_{\mathcal{A}}(G,\epsilon^{n-i}_X})$. Since the equality $\epsilon_Xf=f\epsilon_{G^i}$ holds,
we immediately get $\epsilon_Xf_i=0$ and $\epsilon_Xf_j=f_{j+1}$ for any $1\leqslant j\leqslant i-1$. Thus
$${\epsilon^i_X}f_1={\epsilon^{i-1}_X}f_2=\cdots = \epsilon_Xf_i=0.$$
It means that $\theta$ is well defined. Let $f+\im {\Hom_{\mathcal{A}}(G,\epsilon^{n-i}_X})\in \H_{(i)}(\Hom_{\mathcal{A}}(G, X))$.
Then ${\epsilon^i_X}f=0$. Set $f_1:=f$ and $f_j:=\epsilon_X^{j-1}f_1$ for any $2\leqslant j\leqslant i$. Then
$\theta(f_1,f_2,\cdots,f_i)=f+\im {\Hom_{\mathcal{A}}(G,\epsilon^{n-i}_X})$, which implies that $\theta$ is surjective.
If $\theta(f)=f_1+\im {\Hom_{\mathcal{A}}(G,\epsilon^{n-i}_X})=0$, then there exists $h\in \Hom_{\mathcal{A}}(G,X)$
such that $\epsilon_X^{n-i}h=f_1$. Set $g_i:=h$ and $s:=(0,0,\cdots,g_i): T^i(G)\to X$. It is easily seen that
$$f=\epsilon_{X}^{n-1}s+\epsilon_{X}^{n-2}s\epsilon_{G^i}+\cdots + s\epsilon_{G^i}^{n-1}$$
and $f$ is null-homotopic. The claim is proved.

By the above claim, we know that $T^i(G)$ is $K$-projective for any $1\leqslant i\leqslant n-1$ since $G$ is projective.
If $X\in \D(\mathcal{A}[\epsilon]^n)$ such that $\Hom_{\D(\mathcal{A}[\epsilon]^n)}(T^i(G), X)=0$
for any $1\leqslant i\leqslant n-1$. Then it is deduced from Proposition ~\ref{prop: 5.16} that
$$\Hom_{\D(\mathcal{A}[\epsilon]^n)}(T^i(G), X)\cong \Hom_{\K(\mathcal{A}[\epsilon]^n)}(T^i(G), X)\cong \H_{(i)}(\Hom_{\mathcal{A}}(G, X))=0.$$
Since $G$ is a generator, it implies $X=0$ in $\D(\mathcal{A}[\epsilon]^n)$ by \cite[Lemma 3.1]{G1}.
Let $\{X_j\}_{j\in J}$ be a family of objects of $\D(\mathcal{A}[\epsilon]^n)$.
Using Proposition ~\ref{prop: 5.16} and the above claim again, we have
\begin{align*}
&\Hom_{\D(\mathcal{A}[\epsilon]^n)}(T^i(G), \coprod _{j\in J}X_j)\cong  \Hom_{\K(\mathcal{A}[\epsilon]^n)}(T^i(G), \coprod _{j\in J}X_j)\\
& \ \ \ \ \ \ \ \ \ \ \ \ \ \ \ \ \ \ \ \ \ \ \ \ \ \ \ \ \ \ \ \ \ \ \ \ \cong \H_{(i)}((\Hom_{\mathcal{A}}(G,\coprod _{j\in J}X_j))\\
& \ \ \ \ \ \ \ \ \ \ \ \ \ \ \ \ \ \ \ \ \ \ \ \ \ \ \ \ \ \ \ \ \ \ \ \ \cong \coprod _{j\in J}\H_{(i)}((\Hom_{\mathcal{A}}(G, X_j)
\ (\text{since $G$ is compact})\\
&\ \ \ \ \ \ \ \ \ \ \ \ \ \ \ \ \ \ \ \ \ \ \ \ \ \ \ \ \ \ \ \ \ \ \ \ \cong \coprod _{j\in J}\Hom_{\K(\mathcal{A}[\epsilon]^n)}(T^i(G),X_j)\\
& \ \ \ \ \ \ \ \ \ \ \ \ \ \ \ \ \ \ \ \ \ \ \ \ \ \ \ \ \ \ \ \ \ \ \ \ \cong \coprod _{j\in J}\Hom_{\D(\mathcal{A}[\epsilon]^n)}(T^i(G),X_j).
\end{align*}
So $T^i(R)$ is a compact object in $\D(\mathcal{A}[\epsilon]^n)$, proving the assertion.
\end{proof}

As an immediate consequence of Theorem \ref{thm: 5.17}, we get the following

\begin{corollary}\label{cor: 5.18}
$\D((\Mod R)[\epsilon]^n)$ is a compactly generated triangulated category.
\end{corollary}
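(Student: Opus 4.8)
The plan is to produce an explicit set of compact generators for $\D(\mathcal{A}[\epsilon]^n)$, namely the objects $T^i(G)=(G^{\oplus i},\epsilon_{G^{\oplus i}})$ for $1\leqslant i\leqslant n-1$, where $G$ is the given compact projective generator of $\mathcal{A}$ and $\epsilon_{G^{\oplus i}}$ is the $i\times i$ nilpotent shift. First I would record that $\D(\mathcal{A}[\epsilon]^n)$ has arbitrary coproducts by Corollary~\ref{cor: 5.13}, so that compactness and compact generation make sense at all. Everything then rests on a single computation of morphism groups out of $T^i(G)$, after which both the generating property and compactness fall out formally.

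The heart of the argument is the isomorphism
$$\Hom_{\K(\mathcal{A}[\epsilon]^n)}(T^i(G),X)\cong \H_{(i)}(\Hom_{\mathcal{A}}(G,X))$$
for every $X\in\K(\mathcal{A}[\epsilon]^n)$, where $\Hom_{\mathcal{A}}(G,X)$ is regarded as an $n$-th differential object via post-composition with $\epsilon_X$ (note $\Hom_{\mathcal{A}}(G,\epsilon_X)^n=\Hom_{\mathcal{A}}(G,\epsilon_X^{\,n})=0$). To prove it I would unwind a morphism $f=(f_1,\dots,f_i)\colon T^i(G)\to X$: the relation $\epsilon_X f=f\epsilon_{G^{\oplus i}}$ forces $f_{j+1}=\epsilon_X f_j$, hence $f_j=\epsilon_X^{\,j-1}f_1$ together with $\epsilon_X^{\,i}f_1=0$, so $f$ is completely determined by its first component $f_1\in\Ker\Hom_{\mathcal{A}}(G,\epsilon_X^{\,i})$. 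Because $G$ is projective, $\Hom_{\mathcal{A}}(G,-)$ is exact and therefore commutes with the formation of $\Ker\epsilon_X^{\,i}$ and $\im\epsilon_X^{\,n-i}$; a parallel unwinding of Definition~\ref{df: 4.6} shows that $f$ is null-homotopic precisely when $f_1$ lies in $\im\Hom_{\mathcal{A}}(G,\epsilon_X^{\,n-i})$. Passing to the quotient yields the identification with $\H_{(i)}(\Hom_{\mathcal{A}}(G,X))$.

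Two consequences follow at once. Since $G$ is projective, $\Hom_{\mathcal{A}}(G,Y)$ is acyclic whenever $Y$ is acyclic, so $\Hom_{\K(\mathcal{A}[\epsilon]^n)}(T^i(G),Y)=0$ and each $T^i(G)$ is $K$-projective; Proposition~\ref{prop: 5.16} then upgrades the claim to $\Hom_{\D(\mathcal{A}[\epsilon]^n)}(T^i(G),X)\cong\H_{(i)}(\Hom_{\mathcal{A}}(G,X))$. For the generating property, if $X\in\D(\mathcal{A}[\epsilon]^n)$ satisfies $\Hom_{\D(\mathcal{A}[\epsilon]^n)}(T^i(G),X)=0$ for all $1\leqslant i\leqslant n-1$, then $\H_{(i)}(\Hom_{\mathcal{A}}(G,X))=0$ for every such $i$, and since $G$ is a generator of $\mathcal{A}$ this forces $X\cong 0$ via \cite[Lemma 3.1]{G1}.

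It remains to check that each $T^i(G)$ is compact. For a family $\{X_j\}_{j\in J}$ I would chain the isomorphisms: Proposition~\ref{prop: 5.16} and the claim identify $\Hom_{\D(\mathcal{A}[\epsilon]^n)}(T^i(G),\coprod_{j}X_j)$ with $\H_{(i)}(\Hom_{\mathcal{A}}(G,\coprod_{j}X_j))$; compactness of $G$ in $\mathcal{A}$ moves the coproduct outside $\Hom_{\mathcal{A}}(G,-)$; and because $\mathcal{A}$ is an Ab4-category the coproduct functor is exact, hence commutes with the kernel and image defining $\H_{(i)}$, producing $\coprod_{j}\H_{(i)}(\Hom_{\mathcal{A}}(G,X_j))$. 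Reversing the first two identifications returns $\coprod_{j}\Hom_{\D(\mathcal{A}[\epsilon]^n)}(T^i(G),X_j)$, which is exactly the compactness condition. The main obstacle is the key claim of the second paragraph: the bookkeeping that a morphism out of $T^i(G)$ is pinned down by its first component, and that null-homotopy corresponds precisely to the image of $\epsilon_X^{\,n-i}$, must be carried out carefully, and the Ab4 hypothesis has to be invoked at exactly the point where $\H_{(i)}$ is required to commute with coproducts.
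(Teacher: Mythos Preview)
Your argument is correct and in fact reproduces, essentially verbatim, the paper's proof of Theorem~\ref{thm: 5.17}: the same compact generators $T^i(G)$, the same key computation $\Hom_{\K(\mathcal{A}[\epsilon]^n)}(T^i(G),X)\cong \H_{(i)}(\Hom_{\mathcal{A}}(G,X))$, the same appeal to Proposition~\ref{prop: 5.16} and \cite[Lemma~3.1]{G1}, and the same compactness check via the Ab4 hypothesis. The only discrepancy is one of economy: the paper derives Corollary~\ref{cor: 5.18} in a single line by applying Theorem~\ref{thm: 5.17} to $\mathcal{A}=\Mod R$ with the compact projective generator $G=R$, whereas you have re-proved the general theorem rather than invoked it. Since Theorem~\ref{thm: 5.17} is already established in the paper, your proof of the corollary need only observe that $\Mod R$ is an Ab4-category and that $R$ is a compact projective generator.
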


\vspace{0.5cm}

{\bf Acknowledgements.}
This paper was initialed during the first named author's visit to Northeastern University
from May 2018 to May 2019; he thanks Alex Martsinkovsky for his hospitality. This research
was partially supported by NSFC (Grant Nos. 11971225, 11571164, 11501144),
a Project Funded by the Priority Academic Program Development of Jiangsu Higher Education Institutions
and NSF of Guangxi Province of China (Grant No. 2016GXNSFAA380151).
The authors thank the referee for the useful comments.

\providecommand{\bysame}{\leavevmode\hbox to3em{\hrulefill}\thinspace}
\providecommand{\MR}{\relax\ifhmode\unskip\space\fi MR }
% \MRhref is called by the amsart/book/proc definition of \MR.
\providecommand{\MRhref}[2]{%
\href{http://www.ams.org/mathscinet-getitem?mr=#1}{#2}
}
\providecommand{\href}[2]{#2}

\end{document}